\theoremstyle{plain}
\newtheorem{theorem}{Theorem}[section]
\newtheorem*{theorem*}{Theorem}
\newtheorem{proposition}{Proposition}[section]
\newtheorem{lemma}{Lemma}[section]
\newtheorem{corollary}{Corollary}[section]
\theoremstyle{definition}
\newtheorem{definition}{Definition}[section]
\theoremstyle{remark}
\newtheorem{remark}{Remark}[section]
\newtheorem{example}{Example}[section]
\begin{document}

\title[Analogs of generalized resolvents and eigenfunction
expansions]{Analogs of generalized resolvents and eigenfunction
expansions of relations generated by pair of differential operator
expressions one of which depends on spectral parameter in
nonlinear manner}

\author{Volodymyr Khrabustovskyi}

\address{Ukrainian State Academy of Railway Transport,  Kharkiv, Ukraine}

\email{v{\_}khrabustovskyi@ukr.net}

\subjclass[2000]{Primary 34B05, 34B07, 34L10}

\keywords{Relation generated by pair of differential expressions
one of which depends on spectral parameter in nonlinear manner,
non-injective resolvent, generalized resolvent, eigenfunction
expansion}

\begin{abstract}For the relations generated by pair of differential operator
expressions one of which depends on the spectral parameter in the
Nevanlinna manner we construct analogs of the generalized
resolvents which are integro-differential operators. The
expansions in eigenfunctions of these relations are obtained
\end{abstract}

\maketitle

\section*{Introduction}
We consider either on finite or infinite interval operator
differential equation of arbitrary order
\begin{gather}
\label{GrindEQ__1_} l_\lambda[y]=m[f],\ t\in\bar{\mathcal{I}},\
\mathcal{I}=(a,b)\subseteq\mathbb{R}^1
\end{gather}
in the space of vector-functions with values in the separable
Hilbert space $\mathcal{H}$, where
\begin{gather}
\label{GrindEQ__2_} l_\lambda[y]=l[y]-\lambda m[y]-n_\lambda[y],
\end{gather}
$l[y],m[y]$ are symmetric operator differential expression. The
order of $l_\lambda[y]$ is equal to $r>0$. For the expression
$m[y]$ the subintegral quadratic form $m\{y,y\}$ of the Dirichlet
integral $m[y,y]=\int_{\mathcal{I}}m\{y,y\}dt$ is nonnegative for
$t\in\bar{\mathcal{I}}$. The leading coefficient of the expression
$m[y]$ may lack the inverse from $B(\mathcal{H})$ for any
$t\in\bar{\mathcal{I}}$ and even it may vanish on some intervals.
For the operator differential expression $n_\lambda[y]$ the form
$n_\lambda\{y,y\}$ depends on $\lambda$ in the Nevanlinna manner
for $t\in\bar{\mathcal{I}}$. Therefore the order $s\geq 0$ of
$m[y]$ is even and $\leq r$.

In the Hilbert space $L^2_m(\mathcal{I})$ with metrics generated
by the form $m[y,y]$ for equation
(\ref{GrindEQ__1_})-(\ref{GrindEQ__2_}) we construct analogs
$R(\lambda)$ of the generalized resolvents which in general are
non-injective and which possess the following representation:
\begin{gather}\label{GrindEQ__3_}
R(\lambda)=\int_{\mathbb{R}^1}{dE_\mu\over \mu-\lambda}
\end{gather}
where $E_\mu$ is a generalized spectral family for which
$E_\infty$ is less or equal to the identity operator. (Abstract
operators which possess such representation were studied in
\cite{DSnoo1}.)

This construction is based on a special reduction of the equation
\begin{gather}\label{GrindEQ__4_}
l[y]=m[f]
\end{gather}
to the first order system with weight. Here $l$ and $m$ are
operator differential expressions which are not necessary
symmetric (in contrast to (\ref{GrindEQ__2_})). For construction
of $R(\lambda)$ we also introduce the characteristic operator of
the equation
\begin{gather}\label{GrindEQ__5_}
l_\lambda[y]=-{(\Im l_\lambda)[f]\over \Im\lambda},\
t\in\bar{\mathcal{I}}.
\end{gather}

In the case $r=1$, $n_\lambda[y]=H_\lambda(t)y$ (here the
mentioned reduction is not needed) the resolvents $R(\lambda)$ was
constructed in \cite{Khrab4}.

Further in the work we consider the boundary value problem
obtained by adding to equation
(\ref{GrindEQ__1_})-(\ref{GrindEQ__2_}) the dissipative boundary
conditions depending on a spectral parameter. We prove that for
some boundary conditions solutions of such problems are generated
by the operators $R(\lambda)$ if, in contrast to the case $s=0$,
$n_\lambda[y]=H_\lambda(t)y$, the boundary conditions contain the
derivatives of vector-function $f(t)$ that are taken on the ends
of the interval.

In the work we calculate $E_\Delta$ and derive an inequality of
Bessel type. In the case when the expression $n_\lambda[y]$
submits in a special way to the expression $m[y]$ we obtain the
transformation formulae and the Parseval equality. The general
results obtained in the work are illustrated on the example of
equation (\ref{GrindEQ__1_}) with coefficients which are periodic
on the semi-axes. We remark that in the case $n_\lambda[y]\equiv
0$ it follows from \cite{Khrab1,Khrab2,Khrab3,Khrab6} that if
${\mathcal{I}}=\mathbb{R}^1$, $r>s$ and $\mathrm{dim}
\mathcal{H}<\infty$ then $E_\mu$ for equation (\ref{GrindEQ__1_})
with periodic coefficients on the axis have no jumps. (For $r= s$
in the described case $E_\mu$ may have jump (see, e.g,
\cite{Khrab6})). We show that in contrast to the case
$n_\lambda[y]\equiv 0$ if $r=1$, $\mathrm{dim} \mathcal{H}=2$ then
$E_\mu$ for equation (\ref{GrindEQ__1_}) with periodic
coefficients on the axis may have jump.

In the case $n_\lambda[y]\equiv 0$ the results listed above are
known \cite{Khrab6}, and $R(\lambda)$ is the generalized resolvent
of the minimal relation generated by the pair of expressions
$l[y]$ and $m[y]$. For this case we show in the work that in the
regular case all generalized resolvents are exhausted by the
operators $R(\lambda)$, and thereby by virtue of \cite{Khrab5}
their full description with the help of boundary conditions is
given. A review of other results for the case $n_\lambda[y]\equiv
0$ is in the work \cite{Khrab+}.

In the works \cite{Bruk4}, \cite{Bruk5} the question of the
conditions for holomorphy and continuous reversibility of the
restrictions of maximal relations generated by $l_\lambda[y]$
(\ref{GrindEQ__2_}) with $m[y]\equiv 0$,
$n_\lambda[y]=H_\lambda(t)y$ in $L^2_{\Im H_{\lambda_0}(t)/\Im
\lambda_0}$ ($\Im \lambda_0\not= 0$) and also by the integral
equation with the Nevanlinna matrix measure was studied (using
some of the results from \cite{Khrab5}). We remark that the
relations inverse to those ones considered in \cite{Bruk4},
\cite{Bruk5} do not possess the representation
(\ref{GrindEQ__3_}). Also we note that the resolvent equation
(\ref{GrindEQ__1_})-(\ref{GrindEQ__2_}) does not reduced to the
equations considered in \cite{Bruk4}, \cite{Bruk5}.

Many question, that concern differential operators and relations
in the space of vector-functions, are considered in the monographs
\cite{Atkinson,Berez1,Berez2,GorGor,LyaSto,Marchenko,RBKholkin,Sakhno}
containing an extensive literature. The method of studying of
these operators and relations based on use of the abstract Weyl
function and its generalization (Weyl family) was proposed in
\cite{DerMalamud,DHMdS1,DHMdS2}.

We denote by $(\ .\ )$ and $\|\cdot\|$ the scalar product and the
norm in various spaces with special indexes if it is necessary.

Let an interval $\Delta \subseteq \mathbb{R}^1 ,\,
f\left(t\right)\, \left(t\in \Delta \right)$ be a function with
values in some Banach space $B$. The notation $f\left(t\right)\in
C^{k} \left(\Delta,B\right),\ k=0,\, 1,\, ...$ (we omit the index
$k$ if $k=0$) means, that in any point of $\Delta$
$f\left(t\right)$ has continuous in the norm  $\left\| \, \cdot \,
\right\| _{B}$ derivatives of order up to and including $l$ that
are taken in the norm $\left\| \, \cdot \,\right\| _{B} $; if
$\Delta$ is either semi-open or closed interval then on its ends
belonging to $\Delta$ the one-side continuous derivatives exist.
The notation $f\left(t\right)\in C_{0}^{k} \left(\Delta,B\right)$
means that $f\left(t\right)\in C^{k} \left(\Delta,B \right)$ and
$f\left(t\right)=0$ in the neighbourhoods of the ends of $\Delta$.

\section{The reduction of equation (\ref{GrindEQ__4_}) to the
first order system of canonical type with weight. The Green
formula} We consider in the separable Hilbert space $\mathcal{H}$
equation \eqref{GrindEQ__4_}, where $l\left[y\right]$ and
$m\left[f\right]$ are differential expressions (that are not
necessary symmetric) with sufficiently smooth coefficients from
$B\left(\mathcal{H}\right)$ and of orders $r>0$ and $s$
correspondingly. Here $r\ge s\ge 0$, $s$ is even and these
expressions are presented in the divergent form. Namely:
\begin{equation} \label{GrindEQ__51+_}
l\left[y\right]=\sum\limits _{k=0}^{r}i^{k} l_{k} \left[y\right] ,
\end{equation}
where $l_{2j} =D^{j} p_{j} \left(t\right)D^{j} $, $l_{2j-1}
=\frac{1}{2} D^{j-1} \left\{Dq_{j} \left(t\right)+s_{j}
\left(t\right)D\right\}D^{j-1} $, $p_{j} \left(t\right)$, $q_{j}
\left(t\right),\, s_{j} \left(t\right)\in C^{j}
\left(\bar{\mathcal{I}},B\left(\mathcal{H}\right)\right)$, $D={d
\mathord{\left/{\vphantom{d dt}}\right.\kern-\nulldelimiterspace}
dt} $; $m\left[f\right]$ is defined in a similar way with $s$
instead of $r$ and $\tilde{p}_{j} \left(t\right),\, \,
\tilde{q}_{j} \left(t\right),\, \, \tilde{s}_{j} \left(t\right)\in
B\left(\mathcal{H}\right)$ instead of $p_{j} \left(t\right),\, \,
q_{j} \left(t\right),\, \, s_{j} \left(t\right)$.

In the case of even $r=2n\ge s$, $p_{n}^{-1} \in B\left(\mathcal{
H}\right)$ we denote
\begin{gather}\label{GrindEQ__6_}
Q\left(t,l\right)=\left(\begin{array}{cc} {0} & {iI_{n} } \\
{-iI_{n} } & {0} \end{array}\right)=\frac{J}{i} ,\, \, \,
S\left(t,l\right)=Q\left(t,l\right), \\ \label{GrindEQ__7_}
H\left(t,\, l\right)=\left\| h_{\alpha \beta } \right\| _{\alpha
,\, \beta =1}^{2} ,\, \, h_{\alpha \beta } \in
B\left(\mathcal{H}^n \right),
\end{gather}
where $I_{n} $ is the identity operator in $B\left(\mathcal{H}^{n}
\right);\, \, h_{11} $ is a three diagonal operator matrix whose
elements under the main diagonal are equal to $\left(\frac{i}{2}
q_{1} ,\, \ldots ,\, \frac{i}{2} q_{n-1} \right)$, the elements
over the main diagonal are equal to $\left(-\frac{i}{2} s_{1} ,\,
\, \ldots ,\, \, -\frac{i}{2} s_{n-1} \right)$, the elements on
the main diagonal are equal to $\left(-p_{0} ,\, \, \ldots ,\, \,
-p_{n-2} ,\, \, \frac{1}{4} s_{n} p_{n}^{-1} q_{n} -p_{n-1}
\right)$; $h_{12} $ is an operator matrix with the identity
operators $I_{1} $ under the main diagonal, the elements on the
main diagonal are equal to $\left(0,\, \, \ldots ,\, \, 0,\, \,
-\frac{i}{2} s_{n} p_{n}^{-1} \right)$, the rest elements are
equal to zero; $h_{21} $ is an operator matrix with identity
operators $I_{1} $ over the main diagonal, the elements on the
main diagonal are equal to $\left(0,\, \, \ldots ,\, \, 0,\, \,
\frac{i}{2} p_{n}^{-1} q_{n} \right)$, the rest elements are equal
to zero; $h_{22} =\mathrm{diag}\left(0,\, \, \ldots ,\, \, 0,\, \,
p_{n}^{-1} \right)$.

Also in this case we denote
\footnote{\label{foot1}$W\left(t,l,m\right)$ is given for the case
$s=2n$ . If  $s<2n$ one have set the corresponding elements of
operator matrices $m_{\alpha \beta } $ be equal to zero. In
particular if  $s<2n$ then  $m_{12} =m_{21} =m_{22} =0$  and
therefore $W\left(t,l,m\right)=\mathrm{diag}\left(m_{11}
,0\right)$ in view of \eqref{GrindEQ__13_}.}
\begin{equation} \label{GrindEQ__8_}
W\left(t,\, l,\, m\right)=C^{*-1} \left(t,l\right)\left\{\left\|
m_{\alpha \beta } \right\| _{\alpha ,\, \beta =1}^{2}
\right\}C^{-1} \left(t,l\right), m_{\alpha \beta } \in
B\left(\mathcal{H}^{n} \right),
\end{equation}
where $m_{11} $ is a tree diagonal operator matrix whose elements
under the main diagonal are equal to $\left(-\frac{i}{2}
\tilde{q}_{1} ,\, \ldots ,\, -\frac{i}{2} \tilde{q}_{n-1}
\right)$, the elements over the main diagonal are equal to
$\left(\frac{i}{2} \tilde{s}_{1} ,\, \ldots ,\, \frac{i}{2}
\tilde{s}_{n-1} \right)$, the elements on the main diagonal are
equal to $\left(\tilde{p}_{0} ,\, \ldots ,\, \tilde{p}_{n-1}
\right)$; $m_{12} =\mathrm{diag}\left(0,\, \ldots ,\, 0,\,
\frac{i}{2} \tilde{s}_{n} \right)$, $m_{21}
=\mathrm{diag}\left(0,\, \ldots ,\, 0,\, -\frac{i}{2}
\tilde{q}_{n} \right)$, $m_{22} =\mathrm{diag}\left(0,\, \ldots
,\, 0,\, \tilde{p}_{n} \right)$.

Operator matrix $C\left(t,l\right)$ is defined by the condition
\begin{multline}
\label{GrindEQ__9_} C\left(t,l\right)col\left\{f\left(t\right),\,
f'\left(t\right),\, \ldots ,\, f^{\left(n-1\right)}
\left(t\right),\, f^{\left(2n-1\right)} \left(t\right),\, \ldots
,\, f^{\left(n\right)} \left(t\right)\right\}=\\=
 col\, \left\{f^{\left[0\right]}
\left(t|l\right),\, f^{\left[1\right]} \left(t|l\right),\, \ldots
,\, f^{\left[n-1\right]} \left(t|l\right),\, f^{\left[2n-1\right]}
\left(t\left|l\right. \right),\, \ldots ,\, f^{\left[n\right]}
\left(t\left|l\right. \right)\right\},
\end{multline}
where $f^{\left[k\right]} \left(t\left|L\right. \right)$ are
quasi-derivatives of vector-function $f\left(t\right)$ that
correspond to differential expression $L$.

The quasi-derivatives corresponding to $l$ are equal (cf.
\cite{RBUpsala}) to
\begin{gather} \label{GrindEQ__10_}
y^{\left[j\right]} \left(t\left|l\right. \right)=y^{\left({
j}\right)} \left(t\right),\, \, \, { j}=0,\, \, ...,\, \,
\left[\frac{r}{2} \right]-1, \\ \label{GrindEQ__11_}
y^{\left[n\right]} \left(t\left|l\right.
\right)=\left\{\begin{array}{l} {p_{n} y^{\left(n\right)}
-\frac{i}{2} q_{n} y^{\left(n-1\right)} ,\, \, r=2n} \\
{-\frac{i}{2} q_{n+1} y^{\left(n\right)} ,\, \, r=2n+1}
\end{array}\right.,
\\\label{GrindEQ__12_}
y^{\left[r-j\right]} \left(t\left|l\right.
\right)=-Dy^{\left[r-j-1\right]} \left(t\left|l\right.
\right)+p_{j} y^{\left(j\right)} +\frac{i}{2} \left[s_{j+1}
y^{\left(j+1\right)} -q_{j} y^{\left(j-1\right)} \right],\, j=0,\,
...,\, \left[\frac{r-1}{2} \right],\, q_{0} \equiv 0.
\end{gather}
At that $l\left[y\right]=y^{\left[r\right]} \left(t\left|l\right.
\right)$. The quasi-derivatires $y^{\left[k\right]}
\left(t\left|m\right. \right)$ corresponding to $m$ are defined in
the same way with even $s$ instead of $r$ and $\tilde{p}_{j},
\tilde{q}_{j} ,\tilde{s}_{j} $ instead of $p_{j} ,q_{j} ,s_{j} $.

It is easy to see that
\begin{equation} \label{GrindEQ__13_}
C\left(t,l\right)=\left(\begin{array}{cc} {I_{n} } & {0} \\
{C_{21} } & {C_{22} } \end{array}\right),\, \, \, C_{\alpha\beta }
\in B\left(\mathcal{H}^{n} \right),
\end{equation}
$C_{21} ,\, C_{22} $ are upper triangular operator matrices with
diagonal elements $\left(-\frac{i}{2} q_{1} ,\, \ldots ,\,
-\frac{i}{2} q_{n} \right)$ and $\left(\left(-1\right)^{n-1} p_{n}
,\, \left(-1\right)^{n-2} p_{n} ,\, \ldots ,\, p_{n} \right)$
correspondingly.

In the case of odd $r=2n+1>s$ we denote
\begin{gather} \label{GrindEQ__14_}
Q\left(t,l\right)=\begin{cases}J/\oplus q_{n+1}\\
q_{1}\end{cases},\quad S\left(t,l\right)=\begin{cases} J/i\oplus
s_{n+1},& n>0 \\ s_{1},& n=0\end{cases},
\\
\label{GrindEQ__15_} H\left(t,\, l\right)=\begin{cases}\left\|
h_{\alpha \,\beta } \right\|_{\alpha ,\, \beta =1}^{2},& n>0 \\
p_{0},& n=0
\end{cases},
\end{gather}
where $B\left(\mathcal{H}^{n} \right)\ni h_{11} $ is a
three-diagonal operator matrix whose elements under the main
diagonal are equal to $\left(\frac{i}{2} q_{1} ,\, \ldots ,\,
\frac{i}{2} q_{n-1} \right)$, the elements over the main diagonal
are equal to $\left(-\frac{i}{2} s_{1} ,\, \ldots ,\, -\frac{i}{2}
s_{n-1} \right)$, the elements on the main diagonal are equal to
$\left(-p_{0} ,\, \ldots ,\, -p_{n-1} \right)$, the rest elements
are equal to zero. $B\, \left(\mathcal{ H}^{n+1} ,\, \mathcal{
H}^{n} \right)\ni h_{12} $ is an operator matrix whose elements
with numbers $j,\, j-1$ are equal to $I_{1} ,\, j=2,\, \ldots ,\,
n$, the element with number $n,\, n+1$ is equal to $\frac{1}{2}
s_{n} $, the rest elements are equal to zero. $B\,
\left(\mathcal{H}^{n} ,\, \mathcal{H}^{n+1} \right)\ni h_{21} $ is
an operator matrix whose elements with numbers $j-1,\, j$ are
equal to $I_{1} ,\, j=2,\, \ldots ,\, n$, the element with number
$n+1,\, n$ is equal to $\frac{1}{2} q_{n} $, the rest elements are
equal to zero. $B\, \left(\mathcal{H}^{n+1} \right)\ni h_{22} $ is
an operator matrix whose last row is equal to $\left(0,\, \ldots
,\, 0,\, -iI_{1} ,\, -p_{n} \right)$, last column is equal to
$col\, \left(0,\, \ldots ,\, 0,\, iI_{1} ,\, -p_{n} \right)$, the
rest elements are equal to zero.

Also in this case we denote \footnote{See the previous footnote}
\begin{equation} \label{GrindEQ__16_}
W\left(t,\, l,\, m\right)=\left\| m_{\alpha \beta } \right\| _{\alpha ,\, \beta =1}^{2} ,
\end{equation}
where $m_{11} $ is defined in the same way as $m_{11} $
\eqref{GrindEQ__8_}. $B\left(\mathcal{H}^{n+1} ,\, \mathcal{H}^{n}
\right)\ni m_{12} $ is an operator matrix whose element with
number $n,\, n+1$ is equal to $-\frac{1}{2} \tilde{s}_{n} $, the
rest elements are equal to zero. $B\, \left(\mathcal{H}^{n} ,\,
\mathcal{H}^{n+1} \right)\ni m_{21} $ is an operator matrix whose
element with number $n+1,\, n$ is equal to $-\frac{1}{2}
\tilde{q}_{n} $, the rest elements are equal to zero. $B\,
\left(\mathcal{H}^{n+1} \right)\ni m_{22} =\mathrm{diag}\left(0,\,
\ldots ,\, 0,\, \tilde{p}_{n} \right)$.

Obviously for $H\left(t,l\right)$ \eqref{GrindEQ__7_},
\eqref{GrindEQ__15_} and $W\left(t,l,m\right)$
\eqref{GrindEQ__8_}, \eqref{GrindEQ__16_} one has
\begin{equation} \label{GrindEQ__17_}
H^{*} \left(t,l\right)=H\left(t,l^{*} \right), W^{*} \left(t,l,\, m\right)=W\left(t,l,\, m^{*} \right).
\end{equation}

\begin{lemma}\label{lm1}
Let the order of $\Im l$ is even. Then
\begin{equation} \label{GrindEQ__18_}
\Im H\left(t,l\right)=W\left(t,l,\, -\Im l\right)=W\left(t,l^{*}
,\, -\Im l\right).
\end{equation}
\end{lemma}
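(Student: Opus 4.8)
The plan is to reduce everything to a direct comparison of the operator-matrix entries, using the already-established symmetry relations \eqref{GrindEQ__17_}. Writing $\Im A=\tfrac{1}{2i}(A-A^{*})$ for an operator $A$, the second relation in \eqref{GrindEQ__17_} gives $H^{*}(t,l)=H(t,l^{*})$, so that $\Im H(t,l)=\tfrac{1}{2i}(H(t,l)-H(t,l^{*}))$; thus the left-hand side is expressed entirely through $H$ evaluated at $l$ and at its formal adjoint $l^{*}$. The first step is therefore to record how the coefficients transform under $l\mapsto l^{*}$: from the divergent form \eqref{GrindEQ__51+_} one checks that $l^{*}$ has coefficients $p_{j}^{*}$, $s_{j}^{*}$, $q_{j}^{*}$ in place of $p_{j}$, $q_{j}$, $s_{j}$ (the roles of $q_{j}$ and $s_{j}$ being interchanged under the Lagrange adjoint, which is also what makes \eqref{GrindEQ__17_} hold). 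Consequently $-\Im l=\tfrac{1}{2i}(l^{*}-l)$ is a symmetric expression whose coefficients are $\tilde p_{j}=\tfrac{1}{2i}(p_{j}^{*}-p_{j})$, $\tilde q_{j}=\tfrac{1}{2i}(s_{j}^{*}-q_{j})$, $\tilde s_{j}=\tfrac{1}{2i}(q_{j}^{*}-s_{j})$; by hypothesis its order is even and $\le r$, so $W(t,l,-\Im l)$ is well defined by \eqref{GrindEQ__8_} or \eqref{GrindEQ__16_}.

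Next I would prove $\Im H(t,l)=W(t,l,-\Im l)$ by substituting these coefficients into the definition of $W$ and matching block by block. In the odd case $r=2n+1$ the matrix $W$ in \eqref{GrindEQ__16_} carries no conjugation by $C$ and contains no inverse leading coefficient, so every entry is linear in the coefficients and the comparison with $\tfrac{1}{2i}(H(t,l)-H(t,l^{*}))$, read off from \eqref{GrindEQ__15_}, is immediate. The even case $r=2n$ is the main obstacle: the entries of $H$ in \eqref{GrindEQ__7_} that involve $p_{n}^{-1}$ (namely $p_{n}^{-1}$, $-\tfrac{i}{2}s_{n}p_{n}^{-1}$, $\tfrac{i}{2}p_{n}^{-1}q_{n}$ and $\tfrac14 s_{n}p_{n}^{-1}q_{n}-p_{n-1}$) depend on the coefficients \emph{nonlinearly}, so $\tfrac{1}{2i}(H(t,l)-H(t,l^{*}))$ cannot be compared termwise with the linear matrix $\{\,m_{\alpha\beta}\,\}$ directly. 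The resolution is that the conjugation $C^{*-1}(t,l)\{\cdots\}C^{-1}(t,l)$ in \eqref{GrindEQ__8_}, with $C$ given by \eqref{GrindEQ__13_}, is designed precisely to linearize these terms; the model identity behind every such entry is $\tfrac{1}{2i}\bigl(p_{n}^{-1}-(p_{n}^{*})^{-1}\bigr)=(p_{n}^{*})^{-1}\,\tfrac{1}{2i}(p_{n}^{*}-p_{n})\,p_{n}^{-1}=(p_{n}^{*})^{-1}\tilde p_{n}\,p_{n}^{-1}$, together with the analogous factorizations of the mixed $s_{n}p_{n}^{-1}$, $p_{n}^{-1}q_{n}$ and $s_{n}p_{n}^{-1}q_{n}$ terms. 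Carrying out the multiplication by the block-triangular matrices $C^{-1}(t,l)$, $C^{*-1}(t,l)$ and using that $C_{21},C_{22}$ are upper triangular with the diagonals recorded after \eqref{GrindEQ__13_}, one verifies that these conjugated contributions reproduce exactly the $p_{n}^{-1}$-entries of $\tfrac{1}{2i}(H(t,l)-H(t,l^{*}))$, while all purely linear entries match as in the odd case.

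Finally, the second equality $W(t,l,-\Im l)=W(t,l^{*},-\Im l)$ I would obtain without repeating the block computation, by applying the identity just proved to the expression $l^{*}$ in place of $l$. Since $\Im l^{*}=\tfrac{1}{2i}(l^{*}-l)=-\Im l$ has the same (even) order as $\Im l$, the first equality applies and yields $\Im H(t,l^{*})=W(t,l^{*},-\Im l^{*})=W(t,l^{*},\Im l)$. On the other hand, by \eqref{GrindEQ__17_} one has $H^{*}(t,l^{*})=H(t,l)$, so $\Im H(t,l^{*})=\tfrac{1}{2i}(H(t,l^{*})-H(t,l))=-\Im H(t,l)$. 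Comparing the two expressions and using that $m\mapsto W(t,l^{*},m)$ is linear (the coefficients of $m$ enter $\{\,m_{\alpha\beta}\,\}$ linearly) gives $W(t,l^{*},-\Im l)=-W(t,l^{*},\Im l)=\Im H(t,l)=W(t,l,-\Im l)$, as required. The only genuinely laborious point is the even-case block matching in the middle step; everything else is formal bookkeeping with \eqref{GrindEQ__17_} and the linearity of $W$ in its weight argument.
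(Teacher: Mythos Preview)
Your proposal is correct and follows the same overall scheme as the paper: treat the odd order case by direct linear comparison, handle the even order case by exploiting the conjugation by $C(t,l)$ to reconcile the nonlinear $p_n^{-1}$ entries of $H$ with the linear matrix $\{m_{\alpha\beta}\}$, and deduce the second equality from the first by passing to $l^{*}$ and using \eqref{GrindEQ__17_} together with the linearity of $W$ in its weight argument. Your argument for the second equality is exactly the paper's.

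The only methodological difference is in the even case: you propose to compute $W=C^{*-1}(t,l)\{m_{\alpha\beta}\}C^{-1}(t,l)$ directly and match it against $\Im H$, invoking the operator identity $\tfrac{1}{2i}(p_n^{-1}-(p_n^{*})^{-1})=(p_n^{*})^{-1}\tilde p_n\,p_n^{-1}$ and its cousins. The paper instead goes the other way: it conjugates $H$ by $C$ rather than inverting $C$. Concretely, it splits $H(t,l)=A(t,l)+B(t,l)$, where $B$ collects the four entries containing $p_n^{-1}$, and observes that $C^{*}(t,l)B(t,l)C(t,l)$ collapses to a matrix with only two nonzero entries ($v_{n,2n}=-\tfrac{i}{2}(s_n-q_n^{*})$ and $v_{2n,2n}=p_n^{*}$). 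One then compares $\Im\bigl(C^{*}HC\bigr)=C^{*}(\Im H)C$ with $C^{*}WC=\{m_{\alpha\beta}\}$, which is a purely linear match. This avoids inverting the block-triangular $C$ and keeps the bookkeeping to a minimum; your route requires working out $C^{-1}$ and $C^{*-1}$ and tracking several cross-terms, but it has the virtue of making transparent \emph{which} algebraic identity is responsible for each nonlinear entry. Either computation proves the lemma.
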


\begin{proof}
Let us prove the first equality in \eqref{GrindEQ__18_} for even
$r=2n$. Let us represent $H\left(t,l\right)$ \eqref{GrindEQ__7_}
in the form
\begin{equation} \label{GrindEQ__19_}
H\left(t,l\right)=A\left(t,l\right)+B\left(t,l\right),
\end{equation}
where
\begin{gather} \label{GrindEQ__20_}
B\left(t,l\right)=\left\| B_{jk} \right\| _{j,\, k=1}^{2} ,\, \,
\, B_{jk} \in B\left(\mathcal{H}^{n} \right), \\
\label{GrindEQ__21_} B_{11} =\mathrm{diag}\left(0,\, ...,\, 0,\,
s_{n} p_{n}^{-1} q_{n} /4\right),\, \, \, B_{12}
=\mathrm{diag}\left(0,\, ...,\, 0,\, -is_{n} p_{n}^{-1} /2\right),
\\
\label{GrindEQ__22_} B_{21} =\mathrm{diag}\left(0,\, ...,\, 0,\,
ip_{n}^{-1} q_{n} /2\right),\, \, \, B_{22}
=\mathrm{diag}\left(0,\, ...,\, 0,\, p_{n}^{-1} \right).
\end{gather}
In view of \eqref{GrindEQ__13_}, \eqref{GrindEQ__20_} - \eqref{GrindEQ__22_} one has
\begin{equation} \label{GrindEQ__2311_}
B\left(t,l\right)C\left(t,l\right)=\left\| u_{jk} \right\| _{j,\,
k=1}^{2n} ,\, \, \, u_{jk} \in B\left(\mathcal{H}\right),
\end{equation}
$u_{n2n} =-i{s_{n}  \mathord{\left/{\vphantom{s_{n}
2}}\right.\kern-\nulldelimiterspace} 2} ,\, \, \, u_{2n\, \, 2n}
=I_{1} $, rest $u_{jk} =0$.

Hence $$C^{*}
\left(t,l\right)B\left(t,l\right)C\left(t,l\right)=\left\|
{v}_{jk} \right\|_{j,k=1}^{2n} ,\, \, {v}_{jk} \in
B\left(\mathcal{H}\right),$$ ${ v}_{n\, \, 2n} =-\frac{i}{2}
\left(s_{n} -q_{n}^{*} \right)$, ${v}_{2n\, \, 2n} =p_{n}^{*} $,
rest $v_{jk} =0$.

Hence $C^{*} \left(t,l\right)\Im
H\left(t,l\right)C\left(t,l\right)=C^{*}
\left(t,l\right)W\left(t,l,-\Im l\right)C\left(t,l\right)$ in view
of \eqref{GrindEQ__7_}, \eqref{GrindEQ__8_}, \eqref{GrindEQ__9_},
\eqref{GrindEQ__19_}. The first equality in \eqref{GrindEQ__18_}
for even $r$ is proved. Its proof for odd $r$ follows from
\eqref{GrindEQ__15_}, \eqref{GrindEQ__16_}.

One can see from the proof that
\begin{equation} \label{GrindEQ__231_}
W\left(t,l,\Im l\right)=-\Im H\left(t,l\right).
\end{equation}

The second equality in \eqref{GrindEQ__18_} is a corollary of
\eqref{GrindEQ__231_} and \eqref{GrindEQ__17_}. Lemma \ref{lm1} is
proved
\end{proof}

For sufficiently smooth vector-function $f\left(t\right)$ by
corresponding capital letter we denote (if $f(t)$ has a subscript
then we add the same subscript to $F$)
\begin{multline} \label{GrindEQ__23_}
\mathcal{H}^{r} \ni F\left(t,\, l,m\right)=\\=\begin{cases}
\left(\sum\limits _{j=0}^{s/2}\oplus f^{\left(j\right)}
\left(t\right) \right)\oplus
0 \oplus ...\oplus 0 ,\quad r=2n,&r=2n+1>1,s<2n,\\
\left(\sum\limits _{j=0}^{n-1}\oplus f^{\left(j\right)}
\left(t\right) \right)\oplus 0 \oplus ...\oplus 0 \oplus \,
\left(-if^{\left(n\right)} \left(t\right)\right),&r=2n+1>1,
s=2n,\\ f\left(t\right),&r=1, \\ \left(\sum\limits
_{j=0}^{n-1}\oplus f^{\left(j\right)} \left(t\right) \right)\oplus
\left(\sum\limits _{j=1}^{n}\oplus f^{\left[r-j\right]}
\left(t\left|l\right. \right) \right),& r=s=2n\end{cases}
\end{multline}

From now on in equation \eqref{GrindEQ__4_}
\begin{equation*} \label{GrindEQ__241_}
p_{n}^{-1} \left(t\right)\in B\left(\mathcal{H}\right)\, \, \,
\left(r=2n\right);\, \left(q_{n+1} \left(t\right)+s_{n+1}
\left(t\right)\right)^{-1} \in B\left(\mathcal{H}\right)\, \, \,
\left(r=2n+1\right).
\end{equation*}

\begin{theorem}\label{th1}
Equation \eqref{GrindEQ__4_} is equivalent to the following first
order system
\begin{equation} \label{GrindEQ__24_}
\frac{i}{2} \left(\left(Q\left(t\right)\vec{y}\right)^{{'} }
+S\left(t\right)\vec{y}\hspace{2px}^\prime\right)+H\left(t\right)\vec{y}=W\left(t\right)F\left(t\right)
\end{equation}
with coefficients $Q\left(t\right)=Q\left(t,l\right)$,
$S(t)=S(t,l)$ \eqref{GrindEQ__6_}, \eqref{GrindEQ__14_},
$H(t)=H(t,l)$ \eqref{GrindEQ__7_}, \eqref{GrindEQ__15_}, weight
$W\left(t\right)=W\left(t,\, l^{*} ,\, m\right)$, and with
$F\left(t\right)=F\left(t,\, l^{*} ,m\right)$ that are obtained
from \eqref{GrindEQ__8_}, \eqref{GrindEQ__16_} and
\eqref{GrindEQ__23_} correspondingly with $l^{*} $ instead of $l$.
Namely if $y\left(t\right)$ a solution of equation
\eqref{GrindEQ__4_} then
\begin{multline} \label{GrindEQ__25_}
\vec{y}\left(t\right)=\vec{y}\left(t,\, l,\, m,\,
f\right)=\\=\begin{cases}\left(\sum\limits _{j=0}^{n-1}\oplus
y^{\left(j\right)} \left(t\right) \right)\oplus \left(\sum\limits
_{j=1}^{n}\oplus \left(y^{\left[r-j\right]} \left(t\left|l\right.
\right)-f^{\left[s-j\right]}
\left(t\left|m\right. \right)\right) \right),& r=2n\\
\left(\sum\limits _{j=0}^{n-1}\oplus y^{\left(j\right)}
\left(t\right) \right)\oplus \left(\sum\limits _{j=1}^{n}\oplus
\left(y^{\left[r-j\right]} \left(t\left|l\right.
\right)-f^{\left[s-j\right]} \left(t\left|m\right. \right)\right)
\right)\oplus \left(-iy^{\left(n\right)} \left(t\right)\right),&
r=2n+1>1 \\\quad \text{{\rm (here }}f^{\left[k\right]}
\left(t\left|m\right.
\right)\equiv 0\text{ \rm as }k< \frac{s}{2}\text{\rm)}\\
\\y\left(t\right),& r=1\end{cases}
\end{multline}
is a solution of \eqref{GrindEQ__24_} with the coefficients,
weight and $F\left(t\right)$ menfioned above. Any solution of
equation \eqref{GrindEQ__24_} with such coefficients, weight and
$F\left(t\right)$ is equal to \eqref{GrindEQ__25_}, where
$y\left(t\right)$ is some solution of equation
\eqref{GrindEQ__4_}.
\end{theorem}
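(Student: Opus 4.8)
The plan is to establish the equivalence by a direct verification resting on the quasi-derivative formalism \eqref{GrindEQ__10_}--\eqref{GrindEQ__12_}, carried out in full for even $r=2n$ and reduced to the analogous formulas \eqref{GrindEQ__14_}--\eqref{GrindEQ__16_} in the odd case $r=2n+1$ and to a triviality for $r=1$. The guiding observation is that, by \eqref{GrindEQ__9_} and \eqref{GrindEQ__13_}, the first $n$ quasi-derivatives coincide with the ordinary ones, $y^{[j]}(t|l)=y^{(j)}(t)$ for $j=0,\dots,n-1$; hence the lower half of $\vec y$ in \eqref{GrindEQ__25_} is just $\mathrm{col}\{y,\dots,y^{(n-1)}\}$, while its upper half collects the top quasi-derivatives $y^{[r-1]},\dots,y^{[n]}$ corrected by the subtracted terms $f^{[s-j]}(t|m)$. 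Proving \eqref{GrindEQ__24_} therefore amounts to showing that the quasi-derivative recurrences, together with the defining identity $y^{[r]}(t|l)=l[y]=m[f]$, reassemble exactly into the canonical first-order form with coefficients $Q,S,H$ and weight $W$.

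For the forward implication I would differentiate each entry of $\vec y$ and substitute the recurrences. For even $r$ the matrices $Q=S=J/i$ are constant, so the principal part of \eqref{GrindEQ__24_} reduces to $J\vec{y}'$ and the system splits into two blocks of $n$ equations. The block carrying the derivatives of the ordinary components consists, in its first $n-1$ rows, of the shifts $\tfrac{d}{dt}y^{(j)}=y^{(j+1)}$ encoded by the off-diagonal identity entries of $H$, its last row being the definition \eqref{GrindEQ__11_} of $y^{[n]}$ solved for $y^{(n)}$ through $p_n^{-1}\in B(\mathcal H)$. The block carrying the derivatives of the top components consists of the recurrence \eqref{GrindEQ__12_}, read as $Dy^{[r-j-1]}=-y^{[r-j]}+p_jy^{(j)}+\tfrac i2\bigl(s_{j+1}y^{(j+1)}-q_jy^{(j-1)}\bigr)$, whose zeroth-order terms fill the three-diagonal block $h_{11}$, the topmost equation being $y^{[r]}=m[f]$, which closes the system. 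The corner entries $\tfrac14 s_np_n^{-1}q_n,\ -\tfrac i2 s_np_n^{-1},\ \tfrac i2 p_n^{-1}q_n,\ p_n^{-1}$ of $H(t,l)$ in \eqref{GrindEQ__7_} are precisely those produced by eliminating $y^{(n)}$ in favour of $y^{[n]}$, and the right-hand side $m[f]$ is the one re-expressed through $F$ and the weight $W$.

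The converse is obtained by running this computation backwards. Given any solution $\vec y$ of \eqref{GrindEQ__24_}, the equations expressing the derivatives of the first $n$ components force them to be successive derivatives $y,y',\dots,y^{(n-1)}$ of a single vector-function $y$, with $y^{(n)}$ recovered through $p_n^{-1}$; the remaining equations then identify the other components with $y^{[r-j]}(t|l)-f^{[s-j]}(t|m)$ and, at the top, return $y^{[r]}(t|l)=m[f]$, so that $y$ solves \eqref{GrindEQ__4_}. Since each step is reversible, the two solution sets correspond exactly as asserted in \eqref{GrindEQ__25_}.

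The main obstacle is the entry-by-entry bookkeeping: one must check that the explicit blocks of $H(t,l)$ in \eqref{GrindEQ__7_} and of the conjugated weight $W(t,l^*,m)=C^{*-1}(t,l^*)\{\|m_{\alpha\beta}\|\}C^{-1}(t,l^*)$ in \eqref{GrindEQ__8_} reproduce the recurrences with no residual terms, including the correct interaction of the corrections $f^{[s-j]}(t|m)$ with $l$. The least transparent point is the appearance of $l^{*}$, rather than $l$, in both $W(t,l^*,m)$ and $F(t,l^*,m)$: this is forced by the conjugation by $C(t,l^*)$ that represents the $m$-part of the equation consistently in the reduced variables once the quasi-derivatives $f^{[s-j]}(t|m)$ have been transferred to the left-hand side, and the adjoint identities \eqref{GrindEQ__17_} are what make this substitution legitimate. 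The odd case $r=2n+1$ proceeds identically using \eqref{GrindEQ__14_}--\eqref{GrindEQ__16_}, the only new feature being the blocks $h_{12},h_{21},h_{22}$ acting between $\mathcal H^{n}$ and $\mathcal H^{n+1}$, which carry the extra component $-iy^{(n)}$ present in \eqref{GrindEQ__25_}.
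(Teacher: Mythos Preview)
Your plan is correct and the forward direction is essentially the paper's approach: both amount to verifying that the quasi-derivative recurrences \eqref{GrindEQ__10_}--\eqref{GrindEQ__12_} reassemble into \eqref{GrindEQ__24_}. The paper, however, organizes this more modularly. It first records the homogeneous identity
\[
\tfrac{i}{2}\bigl((Q\vec y(t,l,m,0))'+S\,\vec y\,'(t,l,m,0)\bigr)-H\,\vec y(t,l,m,0)=\mathrm{col}\bigl(y^{[r]}(t|l),0,\dots,0\bigr)
\]
(citing Kogan--Rofe-Beketov), and then isolates the right-hand side computation as a separate Lemma~\ref{lm4}, which gives an explicit formula for $W(t,l^*,m)F(t,l^*,m)$ in terms of the quasi-derivatives $f^{[s-j]}(t|m)$. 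This separation is what makes the role of $l^*$ transparent and is precisely the ``least transparent point'' you flag; you would in effect have to reprove Lemma~\ref{lm4} inside your entry-by-entry check. The paper also handles the case $s<2n$ not by redoing the bookkeeping but by formally padding $m$ to order $2n$ (Lemma~\ref{lm2}) and approximating $f$ by smoother functions (Lemma~\ref{lm3}), which shortcuts a second round of verification.

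The genuine difference is in the converse. You propose to read the system backwards row by row, recovering first $y,\dots,y^{(n-1)}$, then $y^{(n)}$ via $p_n^{-1}$, then the remaining components as $y^{[r-j]}-f^{[s-j]}$, and finally $l[y]=m[f]$ from the top equation. This works, but the paper avoids rerunning the computation: it takes an arbitrary solution $\vec{\tilde y}$ of \eqref{GrindEQ__24_}, solves the Cauchy problem for \eqref{GrindEQ__4_} with initial data $\vec y(0,l,m,f)=\vec{\tilde y}(0)$, and invokes uniqueness for the first-order system to conclude $\vec{\tilde y}=\vec y(t,l,m,f)$. Your argument is more explicit; the paper's is shorter and sidesteps any risk of missing a term in the backward pass.
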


Let us notice that different vector-functions $f(t)$ can generate
different RHSs of equation \eqref{GrindEQ__24_} but unique RHS of
equation \eqref{GrindEQ__4_}.

\begin{proof}
We need the following three lemmas.

\begin{lemma}\label{lm2}
Let $L_{\alpha } $ be a differential expression of $l$ type and of
order $\alpha$. Let us add to $L_{\alpha } $the expressions of
$i^{k} l_{k} $ type, where $k=\alpha +1,\, \ldots ,\, \beta $,
with coefficiens equal to zero. We obtain the expressions
$L_{\beta } $ which formally has the order $\beta $, but in fact
$L_{\beta } $ and $L_{\alpha } $ coincide. Then for sufficiently
smooth vector-function $f\left(t\right)$
$$f^{\left[\beta -j\right]} \left(t\left|L_{\beta } \right.
\right)=\begin{cases}f^{\left[\alpha -j\right]}
\left(t\left|L_{\alpha } \right. \right),& j=0,\, \ldots ,\,
\left[\frac{a+1}{2} \right], \\ 0,& j=\left[\frac{a+1}{2}
\right]+1,\, \ldots ,\, \left[\frac{\beta }{2} \right]
\end{cases}$$
(here $f^{[0]}(t|L_1)$ is defined by (\ref{GrindEQ__11_}) with
$r=1$).
\end{lemma}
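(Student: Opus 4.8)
The plan is to compare the two systems of quasi-derivatives through the common recursion \eqref{GrindEQ__12_}: for a fixed value of its loop index $j$ this formula builds the upper quasi-derivative $f^{[\alpha-j]}(t|L_\alpha)$, respectively $f^{[\beta-j]}(t|L_\beta)$, out of one and the same triple of coefficients $p_j,s_{j+1},q_j$ and out of the preceding quasi-derivative. The lowest quasi-derivatives play no role, since by \eqref{GrindEQ__10_} they coincide with the ordinary derivatives $f^{(k)}$ for every expression of $l$ type; so it suffices to track the middle quasi-derivative \eqref{GrindEQ__11_} and the upper ones \eqref{GrindEQ__12_}, running the index $j$ upward from the middle in two successive phases.

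For the \emph{vanishing phase} I would use that $L_\beta$ arises from $L_\alpha$ by adjoining only vanishing coefficients of index exceeding the order of $L_\alpha$. Hence the leading coefficients entering the middle formula \eqref{GrindEQ__11_} for $L_\beta$ are zero, so $f^{[\lfloor\beta/2\rfloor]}(t|L_\beta)=0$. Running \eqref{GrindEQ__12_} upward for $L_\beta$, as long as the loop index $j$ exceeds $\lfloor(\alpha+1)/2\rfloor$ the coefficients $p_j,s_{j+1},q_j$ are all adjoined zeros, so each step reads $f^{[\beta-j]}(t|L_\beta)=-Df^{[\beta-j-1]}(t|L_\beta)$, and a vanishing predecessor produces a vanishing successor. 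This yields exactly the zero range $f^{[\beta-j]}(t|L_\beta)=0$ for $j=\lfloor(\alpha+1)/2\rfloor+1,\dots,\lfloor\beta/2\rfloor$.

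The \emph{matching phase} begins at the seam $j=\lfloor(\alpha+1)/2\rfloor$. Here $f^{[\alpha-j]}(t|L_\alpha)$ is the \emph{middle} quasi-derivative of $L_\alpha$, given by \eqref{GrindEQ__11_}, whereas $f^{[\beta-j]}(t|L_\beta)$ is still an \emph{upper} one, given by \eqref{GrindEQ__12_}; but at this step the predecessor $f^{[\beta-j-1]}(t|L_\beta)$ vanishes, being the top of the zero range just established, and some of the coefficients $p_j,s_{j+1},q_j$ are the first adjoined zeros, so that \eqref{GrindEQ__12_} for $L_\beta$ collapses precisely to \eqref{GrindEQ__11_} for $L_\alpha$, giving equality at the seam. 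For $j<\lfloor(\alpha+1)/2\rfloor$ every coefficient $p_j,s_{j+1},q_j$ has index within the order of $L_\alpha$ and therefore coincides for the two expressions; as the predecessors coincide by the induction hypothesis and \eqref{GrindEQ__12_} has identical form, the equality $f^{[\beta-j]}(t|L_\beta)=f^{[\alpha-j]}(t|L_\alpha)$ propagates upward to $j=0$, i.e.\ to $f^{[\beta]}(t|L_\beta)=f^{[\alpha]}(t|L_\alpha)=L_\alpha[f]$.

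The analytic content is thus only the linear first-order recursion \eqref{GrindEQ__12_}; the work lies entirely in the index bookkeeping, which is the step I expect to cost most care. One must certify, for each value of the shared loop index $j$, exactly which of $p_j,s_{j+1},q_j$ lie among the adjoined zeros, and verify that at the seam enough of them vanish to leave precisely the single leading coefficient that reproduces the middle-quasi-derivative formula \eqref{GrindEQ__11_} of $L_\alpha$. The pattern of vanishing at the seam depends on the parity of $\alpha$ (the even case of \eqref{GrindEQ__11_} retaining $p$ and $q$, the odd case only the leading $q$), and the thresholds $\lfloor(\alpha+1)/2\rfloor,\lfloor\beta/2\rfloor$ shift with the parities of $\alpha$ and $\beta$; all four parity combinations are handled by the same two-phase induction. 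The degenerate case $\beta=\alpha+1$, where the zero range is empty and only the seam step survives, serves as a convenient consistency check.
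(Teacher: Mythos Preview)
Your approach is correct and is exactly what the paper does: its proof of this lemma consists of the single sentence that the result ``follows from formulae \eqref{GrindEQ__11_}--\eqref{GrindEQ__12_} for quasi-derivatives,'' which is precisely the recursion you unwind in detail. Your two-phase induction with the parity bookkeeping simply makes explicit the verification the paper leaves to the reader.
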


\begin{proof}
Proof of Lemma \ref{lm2} follows from formulae
\eqref{GrindEQ__11_} -- \eqref{GrindEQ__12_} for
quasi-derivatives. \end{proof}

\begin{lemma}\label{lm3}
Let $f\left(t\right)\in C^{s} \left(\left[\alpha ,\beta
\right],\mathcal{H}\right),\, y\left(t\right)$ is a solution of
corresponding equation \eqref{GrindEQ__4_}. Then the sequence
$f_{k} \left(t\right)\in C^{\infty } \left(\left[\alpha ,\beta
\right],\, \mathcal{H}\right)$ and solutions $y_{k}
\left(t\right)$ of equation \eqref{GrindEQ__4_} with
$f\left(t\right)=f_{k} \left(t\right)$ exist such that
\[f_{k} \left(t\right)\stackrel{C^{s} \left(\left[\alpha ,\beta \right],
\mathcal{H}\right)}{\longrightarrow} f\left(t\right),\, \, \,
y_{k} \left(t\right)\stackrel{C^{r} \left(\left[\alpha ,\beta
\right],\mathcal{H}\right)}{\longrightarrow} y\left(t\right).\]
\end{lemma}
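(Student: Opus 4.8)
The plan is to freeze the solution $y$ only through its Cauchy data at $\alpha$ and to push the entire approximation into the right-hand side $m[f]$. First I would smooth the data: after a $C^s$ extension of $f$ to a slightly larger interval and convolution with a scalar mollifier $\rho_{\varepsilon}$, the functions $f_k=f*\rho_{\varepsilon_k}$ lie in $C^\infty([\alpha,\beta],\mathcal H)$ and satisfy $f_k\to f$ in $C^s([\alpha,\beta],\mathcal H)$ as $\varepsilon_k\to0$ (the Bochner integral behaves exactly as in the scalar case). Since $m[f]$ is a differential expression of order $s$ with coefficients in $B(\mathcal H)$, the convergence $f_k\to f$ in $C^s$ forces $g_k:=m[f_k]\to m[f]=:g$ uniformly on $[\alpha,\beta]$, i.e. in $C^0$. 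This is the only place where the order $s$ of $m$ is felt, which is exactly why the hypothesis is stated in $C^s$ and nothing weaker.

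Next I would realise equation \eqref{GrindEQ__4_} as a first order linear system for the quasi-derivatives. Writing $r=2n$ (the odd case is analogous with $p_n^{-1}$ replaced by $(q_{n+1}+s_{n+1})^{-1}$) and $\vec u=\operatorname{col}(y^{[0]},\dots,y^{[r-1]})$ for the quasi-derivatives relative to $l$, formulae \eqref{GrindEQ__10_}--\eqref{GrindEQ__12_} together with $y^{[r]}=l[y]=g$ give $\vec u'=\mathcal A(t)\vec u+\mathcal B(t)g$, where $\mathcal A,\mathcal B$ are built from the coefficients of $l$ and are continuous; indeed the term $\mathcal B g$ enters only the equation for $(y^{[r-1]})'=-y^{[r]}+\dots=-g+\dots$, all other components reducing, by \eqref{GrindEQ__10_}--\eqref{GrindEQ__12_}, to a known higher quasi-derivative plus ordinary derivatives of order $\le n$. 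I would then \emph{define} $y_k$ to be the solution of \eqref{GrindEQ__4_} with $f=f_k$ whose quasi-derivative vector $\vec u_k$ solves $\vec u_k'=\mathcal A\vec u_k+\mathcal B g_k$ with the \emph{same} Cauchy data $\vec u_k(\alpha)=\vec u(\alpha)$; one must not smooth $y$ directly, as that would destroy the relation $l[y_k]=m[f_k]$. Passing to the Volterra integral form and applying Gronwall to $\vec u_k-\vec u$, the identical initial data and $g_k\to g$ in $C^0$ yield $\vec u_k\to\vec u$ uniformly on $[\alpha,\beta]$.

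It remains to upgrade the $C^0$-convergence of the quasi-derivatives to $C^r$-convergence of the functions, and this is the real point. I would show that every ordinary derivative $y^{(j)}$, $j=0,\dots,r$, is an algebraic (not differential) continuous expression in the components of $\vec u$, in $g$ (only for $j=r$) and in the coefficients of $l$. For $j\le n$ this is immediate from \eqref{GrindEQ__10_}--\eqref{GrindEQ__11_} using $p_n^{-1}\in B(\mathcal H)$. For $n<j\le r$ I would argue by induction, reading \eqref{GrindEQ__12_} with its index running from $n-1$ down to $0$: each such relation lets me express the single derivative $(y^{[r-j-1]})'$ occurring in it through the known higher quasi-derivative $y^{[r-j]}$ plus ordinary derivatives of order $\le n$, so that when I differentiate the already-obtained formula for $y^{(n+m-1)}$ the leading $D$ is immediately absorbed into $y^{[r-j]}$ rather than producing a derivative of $\vec u$. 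Thus $y^{(n+1)},\dots,y^{(r)}$ come out algebraically in $\vec u$ and $g$, the top one via $y^{[r]}=g$; this is where $p_j,q_j,s_j\in C^j$ is used, since recovering $y^{(n+m)}$ costs $m$ derivatives of $p_n,q_n,s_n$, and $m\le n$. Since these reconstruction maps are continuous and involve no derivative of $\vec u$, the bare uniform convergences $\vec u_k\to\vec u$ and $g_k\to g$ give $y_k^{(j)}\to y^{(j)}$ uniformly for every $j\le r$, i.e. $y_k\to y$ in $C^r([\alpha,\beta],\mathcal H)$.

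The main obstacle is precisely this last algebraic reconstruction. A naive recovery of $y^{(n+1)},\dots,y^{(r)}$ by repeatedly differentiating $y^{(n)}=p_n^{-1}(y^{[n]}+\tfrac i2 q_n y^{(n-1)})$ would demand $\vec u\in C^{n}$, which is false in general and would break the limit. Organising the elimination through \eqref{GrindEQ__12_} so that each differentiation is absorbed into a known quasi-derivative — and, at the very top, into $g$ — is what keeps the whole reconstruction at the level of $C^0$-data and lets the approximation close; everything else is the standard continuous dependence of linear ordinary differential equations on their right-hand side and initial data.
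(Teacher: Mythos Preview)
Your argument is correct and follows essentially the same route as the paper's own proof, which simply cites the Weierstrass approximation theorem for vector-functions (to produce the $f_k$) and a variation-of-constants formula from Daletskiy--Krein (to obtain convergence of the corresponding $y_k$). Your mollification and Gronwall steps are routine substitutes for these two citations, and the algebraic reconstruction of $y^{(j)}$ from the quasi-derivative vector and $g=m[f]$ that you carefully spell out is precisely the standard quasi-derivative machinery the paper takes for granted.
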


This is trivial consequence of Weierstrass theorem for
vector-functions \cite{Schwartz} and formula (1.21) from
\cite{DalKrein}.

\begin{lemma}\label{lm4}
Let vector-function $f\left(t\right)\in C^{s}
\left(\bar{\mathcal{I}},\, \mathcal{H}\right)$. Then
\begin{multline} \label{GrindEQ__251_}
W\left(t,\, l^{*} ,\, m\right)F\left(t,\, l^{*}
,m\right)=\\=\begin{cases} \left(\sum\limits _{j=0}^{s/2-1}\oplus
\left(f^{\left[s-j\right]} \left(t\left|m\right.
\right)+\left(f^{\left[s-j-1\right]} \left(t\left|m\right.
\right)\right)^{{'} } \right)\right)\oplus\\\qquad\oplus
f^{\left[s/2\right]} \left(t\left|m\right. \right)\oplus 0\oplus
\ldots \oplus 0,& r=2n+1, r=2n, 0<s<2n  \\
\left(\sum\limits _{j=0}^{s/2-1}\oplus \left(f^{\left[s-j\right]}
\left(t\left|m\right. \right)+\left(f^{\left[s-j-1\right]}
\left(t\left|m\right. \right)\right)^{{'} } \right)
\right)\oplus\\\qquad\oplus 0\oplus \ldots +\oplus 0\oplus
\left(-if^{\left[n\right]}
\left(t\left|m\right. \right)\right),& r=2n+1, s=2n>0 \\
\tilde{p}_{0} \left(t\right)f\left(t\right)\oplus 0\oplus \ldots
\oplus 0,& s=0 \\\bigg[ \left(\sum\limits _{j=0}^{s/2-1}\oplus
\left(f^{\left[s-j\right]} \left(t\left|m\right.
\right)+\left(f^{\left[s-j-1\right]} \left(t\left|m\right.
\right)\right)^{{'} } \right) \right)\oplus\\\qquad\oplus 0\oplus
\ldots \oplus 0\bigg]+H\left(t,l\right)\left(0\oplus \ldots \oplus
0\oplus f^{\left[n\right]} \left(t\left|m\right. \right)\right),&
r=s=2n
\end{cases}
\end{multline}
\end{lemma}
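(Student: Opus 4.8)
The plan is to prove \eqref{GrindEQ__251_} by straightforward substitution of the definitions \eqref{GrindEQ__8_}, \eqref{GrindEQ__16_} of $W$ and \eqref{GrindEQ__23_} of $F$ (with $l^*$ in place of $l$), followed by an identification of the resulting matrix entries with the quasi-derivatives of $m$ via \eqref{GrindEQ__11_}--\eqref{GrindEQ__12_}. First I would observe that in every case except $r=s=2n$ neither $W(t,l^*,m)$ nor $F(t,l^*,m)$ actually depends on $l$: the column $F$ contains no quasi-derivatives (these enter \eqref{GrindEQ__23_} only when $r=s=2n$), while for even $r=2n$ with $s<2n$ the footnote convention forces $m_{12}=m_{21}=m_{22}=0$, whence $W=\mathrm{diag}(m_{11},0)$ by \eqref{GrindEQ__13_}, and for odd $r$ the matrix $W=\|m_{\alpha\beta}\|$ is already independent of $l$. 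Thus for $s=0$, for $0<s<2n$, and for odd $r$ the computation reduces to a single banded matrix times a column of ordinary derivatives.

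The core observation is that the three diagonals of $m_{11}$, applied to $\mathrm{col}\{f,f',\dots\}$, reproduce exactly the recursion \eqref{GrindEQ__12_}: the $k$-th row equals $\tilde p_{k-1}f^{(k-1)}+\tfrac i2\tilde s_k f^{(k)}-\tfrac i2\tilde q_{k-1}f^{(k-2)}$, which is precisely $f^{[s-(k-1)]}(t|m)+\bigl(f^{[s-k]}(t|m)\bigr)'$. When $0<s<2n$ the band terminates at the row in which the superdiagonal coefficient $\tilde s_{s/2+1}$ vanishes, and that row collapses by \eqref{GrindEQ__11_} to the isolated term $f^{[s/2]}(t|m)$, while all later rows and the whole second block vanish; this yields case A. When $r=2n+1$, $s=2n$, the superdiagonal term missing from the last row of $m_{11}$ is restored by the single entry of $m_{12}$, and the corner entries of $m_{21},m_{22}$ assemble by \eqref{GrindEQ__11_} into the final component $-if^{[n]}(t|m)$; this yields case B. The case $s=0$ is immediate, $W=\mathrm{diag}(\tilde p_0,0,\dots,0)$ and $F=f\oplus0\oplus\dots$. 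These row-by-row identifications dispose of every case except $r=s=2n$.

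The crux is the case $r=s=2n$. Here I would first use \eqref{GrindEQ__9_} together with $f^{[j]}(t|l^*)=f^{(j)}$ for $j\le n-1$ from \eqref{GrindEQ__10_} to recognize that $F(t,l^*,m)=C(t,l^*)\Phi$, where $\Phi=\mathrm{col}\{f,\dots,f^{(n-1)},f^{(2n-1)},\dots,f^{(n)}\}$ is a column of ordinary derivatives. Consequently $W(t,l^*,m)F(t,l^*,m)=C^{*-1}(t,l^*)\,M\,\Phi$ with $M=\|m_{\alpha\beta}\|$, so the awkward factor $C^{-1}(t,l^*)$ cancels. A direct computation of $M\Phi$ splits it as $V_1+V_2$, where $V_1$ carries in its first $n$ entries the sums $f^{[s-j]}(t|m)+(f^{[s-j-1]}(t|m))'$ and vanishes in its last $n$ entries, while $V_2=\mathrm{col}\{0,\dots,0,f^{[n]}(t|m)\}$. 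The block-triangular form \eqref{GrindEQ__13_} immediately gives $C^{*-1}(t,l^*)V_1=V_1$, which is the bracketed term of \eqref{GrindEQ__251_}.

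The main obstacle is the remaining identity $C^{*-1}(t,l^*)V_2=H(t,l)V_2$, that is, the coincidence of the last columns of $C^{*-1}(t,l^*)$ and of $H(t,l)$; this is where I expect the bookkeeping to be most delicate, since it couples the adjoint expression to a triangular inverse. Passing to $l^*$ interchanges the roles of the coefficients $q_j$ and $s_j$ and conjugates them, so the relevant leading coefficients of $l^*$ are $p_n^*,s_n^*,q_n^*$ in place of $p_n,q_n,s_n$. Since $C_{21},C_{22}$ are upper triangular with the diagonals recorded in \eqref{GrindEQ__13_}, their adjoints are lower triangular, and a short computation gives the last column of $C^{*-1}(t,l^*)$ as $\mathrm{col}\{0,\dots,0,-\tfrac i2 s_n p_n^{-1},0,\dots,0,p_n^{-1}\}$, with its two nonzero entries in positions $n$ and $2n$. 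Reading the last column of $H(t,l)$ off \eqref{GrindEQ__7_} (the entry $-\tfrac i2 s_n p_n^{-1}$ of $h_{12}$ and the entry $p_n^{-1}$ of $h_{22}$) exhibits the identical column. Hence $C^{*-1}(t,l^*)V_2=H(t,l)V_2$, and the $r=s=2n$ line of \eqref{GrindEQ__251_} follows, completing the proof.
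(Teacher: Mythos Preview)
Your proof is correct and, in the decisive case $r=s=2n$, runs along the same line as the paper's: both compute $M\Phi$ (the paper's left-hand side of \eqref{GrindEQ__26_}) as $V_1+V_2$ and then invoke the same block-triangular facts about $C(t,l^*)$; your identity $C^{*-1}(t,l^*)V_2=H(t,l)V_2$ is exactly the paper's statement that the last column of $C^{*}(t,l^*)H(t,l)$ equals $\mathrm{col}(0,\ldots,0,I_1)$, read from the other side. The only methodological difference is in the sub-cases $s<2n$: the paper derives them from the $s=2n$ case by padding $m$ to formal order $2n$ (Lemma~\ref{lm2}) and approximating $f$ by smoother functions (Lemma~\ref{lm3}), whereas you argue directly from $W=\mathrm{diag}(m_{11},0)$ (footnote convention together with \eqref{GrindEQ__13_}) and the explicit tridiagonal action of $m_{11}$ on the column of derivatives. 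Your direct route is cleaner here and avoids the approximation step entirely, at the cost of repeating the row-by-row bookkeeping; the paper's route is shorter once Lemmas~\ref{lm2}--\ref{lm3} are in hand and reuses the full-order computation uniformly.
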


Let us notice that $W(t,l^*,m)F(t,l^*,m)$ does not change if the
null-components in $F(t,l^*,m)$ we change by any
$\mathcal{H}$-valued vector-functions.

\begin{proof}
Let us prove Lemma \ref{lm4} for $r=s=2n$. It is sufficient to
verify that
\begin{multline}\label{GrindEQ__26_}
\left(\left\| m_{\alpha \beta } \left(t\right)\right\| _{\alpha
,\beta =1}^{2} \right)col\left\{f\left(t\right),\,
f'\left(t\right),\, ...,\, f^{\left(n-1\right)} \left(t\right),\,
f^{\left(2n-1\right)} \left(t\right),\, ...\, f^{\left(n\right)}
\left(t\right)\right\}= \\=C^{*} \left(t,l^{*}
\right)\left\{\left[\left(\sum\limits
_{j=0}^{n-1}\left(f^{\left[r-j\right]} \left(t\left|m\right.
\right)+\left(f^{[r-j-1]} \left(t\left|m\right.
\right)\right)^{{'} } \right)\right)\oplus 0\oplus ...\oplus 0
\right]+\right.\\\left. +H\left(t\left|l\right.
\right)\left(0\oplus 0\oplus ...\oplus 0\oplus f^{\left[n\right]}
\left(t\left|m\right. \right)\right)\right\}.
\end{multline}

But in view of \eqref{GrindEQ__8_}, \eqref{GrindEQ__11_},
\eqref{GrindEQ__12_} the left side of equality
\eqref{GrindEQ__26_} is equal to
\[\left(\sum\limits _{j=0}^{n-1}\oplus \left(f^{\left[r-j\right]} \left(t\left|m\right. \right)\right)+\left(f^{\left[r-j-1\right]} \left(t\left|m\right. \right)\right)^{{'} }  \right)\oplus {\rm O} \oplus ...\oplus {\rm O} \oplus f^{\left[n\right]} \left(t\left|m\right. \right).\]
And hence equality \eqref{GrindEQ__26_} is true since
$C\left(t,l^{*} \right)\left[\ldots \right]=\left[\ldots \right]$
and the last column of $C^{*} \left(t,l^{*}
\right)H\left(t,l\right)$ is equal to $col\left(0,...,\, 0,\,
I_{1} \right)$ in view of \eqref{GrindEQ__7_},
\eqref{GrindEQ__13_}.

The proof for $r=2n+1,\, s=2n$ is carried out via direct
calculation using \eqref{GrindEQ__16_}, \eqref{GrindEQ__11_},
\eqref{GrindEQ__12_}.

The proof for $s<2n$ follows from the case $s=2n$ consicered
above, Lemmas \ref{lm2}, \ref{lm3} and fact that elements $u_{jk}
\in B\left(\mathcal{H}\right)$ of matrix $W\left(t,l^{*}
,m\right)$ are equal to zero if $s<2n$ and $i>{s
\mathord{\left/{\vphantom{s 2}}\right.\kern-\nulldelimiterspace}
2} $ or $j>{s \mathord{\left/{\vphantom{s
2}}\right.\kern-\nulldelimiterspace} 2} $. Lemma \ref{lm4} is
proved.
\end{proof}

Let us return to the proof of Theorem \ref{th1}. Let
$y\left(t\right)$ is a solution of equation \eqref{GrindEQ__4_}.
Then
\begin{multline} \label{GrindEQ__261_}
\frac{i}{2}
\left\{\left(Q\left(t,l\right)\vec{y}\left(t,l,m,0\right)\right)^{{'}
}
+S\left(t,l\right)\vec{y}\hspace{2px}'\left(t,l,m,0\right)\right\}-
H\left(t,l\right)\vec{y}\left(t,l,m,0\right)=\\=\mathrm{diag}\left(y^{\left[r\right]}
\left(t\left|l\right. \right),0,\ldots ,0\right)
\end{multline}
in view of formulae that are analogues to formulae (4.10), (4.11),
(4.24), (4.25) from \cite{KoganRB}. Using \eqref{GrindEQ__261_}
and Lemma \ref{lm4} we show via direct calculations that
$\vec{y}\left(t,l,m,f\right)$ \eqref{GrindEQ__25_} is a solution
of \eqref{GrindEQ__24_} for $r=s=2n,\, \, r=2n+1,\, \, s=2n$.
Therefore in view of Lemmas \ref{lm2}, \ref{lm3}\
$\vec{y}\left(t,l,m,f\right)$ is a solution of
\eqref{GrindEQ__24_} for $s<2n$.

Conversely let $\vec{\tilde{y}}\left(t\right)=col\left(y_{1}
,\ldots ,y_{r} \right)$ is a solution of \eqref{GrindEQ__24_}. Let
$y\left(t\right)$ is a solution of Cauchy problem that is obtained
by adding the initial condition
$\vec{y}\left(0,l,m,f\right)=\vec{\tilde{y}}\left(0\right)$ to
equation \eqref{GrindEQ__4_}. Then
$\vec{\tilde{y}}\left(t\right)=\vec{y}\left(t,l,m,f\right)$ in
view of existence and uniqueness theorem. Theorem \ref{th1} is
proved
\end{proof}

Let us notice that Theorem \ref{th1} remains valid if
null-components of $F(t,l^*,m)$ we change by any
$\mathcal{H}$-valued vector-functions.

For differential expression $L=\sum\limits _{k=0}^{R}i^{k} L_{k}
$, where $L_{2j} =D^{j} P_{j} \left(t\right)D^{j}$,\\ $L_{2j-1}
=\frac{1}{2} D^{j-1} \left\{DQ_{j} \left(t\right)+S_{j}
\left(t\right)\, D\right\}D^{j-1} $, we denote by
\begin{equation} \label{GrindEQ__27_}
L\left[f,\, g\right]=\int_{\mathcal{I}}L\left\{f,g\right\}dt ,
\end{equation}
the bilinear form which corresponds to Dirichlet integral for this
expression. Here
\begin{multline} \label{GrindEQ__28_}
L\left\{f,g\right\}=\sum\limits _{j=0}^{\left[{R
\mathord{\left/{\vphantom{R 2}}\right.\kern-\nulldelimiterspace}
2} \right]}\left(P_{j} \left(t\right)f^{\left(j\right)}
\left(t\right),\, g^{\left(j\right)} \left(t\right)\right)+\\+
\frac{i}{2} \sum\limits _{j=1}^{\left[\frac{R+1}{2}
\right]}\left(S_{j} \left(t\right)f^{\left(j\right)}
\left(t\right),\, g^{\left(j-1\right)}
\left(t\right)\right)-\left(Q_{j}
\left(t\right)f^{\left(j-1\right)}
\left(t\right),g^{\left(j\right)} \left(t\right)\right)
\end{multline}

\begin{theorem}[On the relationships between bilinear
forms]\label{th2} Let $f\left(t\right),\, y\left(t\right),\, f_{k}
\left(t\right),\, y_{k} \left(t\right)\, \left(k=1,2\right)$ be
sufficiently smooth vector-function. Starting from these functions
by the formulae (\ref{GrindEQ__23_}), (\ref{GrindEQ__25_}) we
construct $F(t,l,m)$, $F_k(t,l,m)$, $\vec{y}(t,l,m,f)$,
$\vec{y}_k(t,l,m,f_k)$. Then:
\\
1.
\begin{equation} \label{GrindEQ__29_}
\left(W\left(t,\, l,\, m\right)F_{1} \left(t,l,m\right),\, F_{2}
\left(t,l,m\right)\right)=m\left\{f_{1} ,f_{2} \right\}.
\end{equation}
2. a) If the order of $\Im l$ is even, then
\begin{multline} \label{GrindEQ__30_}
\left(W\left(t,l,-\Im l\right)\vec{y}\left(t,l,m,f\right),\,
\vec{y}\left(t,\, l,\, m,\, f\right)\right)-\Im \left(W\left(t,\,
l^{*} ,m^{*} \right)\vec{y}\left(t,\, l,m,f\right)\right),\,
F\left(t,l^{*} ,m\right)=\\=-\left(\Im
l\right)\left\{y,y\right\}-\Im\left( m^{*}
\left\{y,f\right\}\right).
\end{multline}
\quad\ b)
\begin{multline}\label{GrindEQ__31_}
m\left\{y_{1} ,f_{2} \right\}-m\left\{f_{1} ,y_{2} \right\}=\\=
\left(W\left(t,l,m\right)\vec{y}_{1} \left(t,l,m,f_{1}
\right),F_{2} \left(t,l,m\right)\right)-\left(W\left(t,l^{*}
,m\right)F_{1} \left(t,l^{*} ,m\right),\vec{y}_2\left(t,l^{*}
,m^{*} ,f_{2} \right)\right),
\end{multline}
although for $r=s$ the corresponding terms in the right-and
left-hand side of \eqref{GrindEQ__30_} and \eqref{GrindEQ__31_} do
not coincide.
\end{theorem}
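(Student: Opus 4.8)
All three assertions are pointwise (in $t$) algebraic identities linking the intrinsic Dirichlet subintegral forms $m\{\cdot,\cdot\}$ and $(\Im l)\{\cdot,\cdot\}$ of \eqref{GrindEQ__28_} with the weighted $\mathcal{H}^{r}$-pairings of the transformed objects $F$ and $\vec{y}$. Here $y$ and $f$ are \emph{arbitrary} smooth functions rather than solutions of \eqref{GrindEQ__4_}, so the canonical system of Theorem \ref{th1} is not available; the admissible tools are the defining formulae \eqref{GrindEQ__23_}, \eqref{GrindEQ__25_}, \eqref{GrindEQ__28_}, the symmetry relations \eqref{GrindEQ__17_}, Lemma \ref{lm1}, and part 1 itself. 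I would therefore prove part 1 first and use it as the engine for parts 2a and 2b. Throughout I write $F(t,l,m)[g]$ to exhibit the generating function $g$.

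For part 1 I would exploit the defining relation \eqref{GrindEQ__9_} of $C(t,l)$. In the principal case $r=s=2n$ the vector $F(t,l,m)[f]$ is, by \eqref{GrindEQ__10_} and \eqref{GrindEQ__23_}, exactly the column of $l$-quasi-derivatives on the right of \eqref{GrindEQ__9_}, so $F(t,l,m)[f]=C(t,l)\Phi$ with $\Phi=\mathrm{col}\{f,\dots,f^{(n-1)},f^{(2n-1)},\dots,f^{(n)}\}$. Substituting into $W=C^{*-1}\{\|m_{\alpha\beta}\|\}C^{-1}$ cancels the $C$-factors and gives $(W(t,l,m)F_{1},F_{2})=(\{\|m_{\alpha\beta}\|\}\Phi_{1},\Phi_{2})$, whereupon the explicit entries of $m_{\alpha\beta}$ listed after \eqref{GrindEQ__8_} reproduce \eqref{GrindEQ__28_} for $m$ term by term (the diagonal $\tilde p_{j}$ give the $(\tilde p_{j}f_{1}^{(j)},f_{2}^{(j)})$ summands, the off-diagonal $\tilde q_{j},\tilde s_{j}$ the $\tfrac{i}{2}$ summands). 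The remaining cases are easier: for odd $r$ there is no $C$-conjugation in \eqref{GrindEQ__16_}, so the identity is a direct matrix multiplication, and for $s<r$ the footnote block form $W=\mathrm{diag}(m_{11},0)$ reduces everything to the top block; the passage from $s=r$ to $s<r$ is supplied by Lemmas \ref{lm2} and \ref{lm3}.

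For part 2a I would produce the two summands of the right-hand side of \eqref{GrindEQ__30_} separately. Part 1 applied with the (even-order, $\le r$) weight $-\Im l$ and both functions equal to $y$ gives $(W(t,l,-\Im l)F_{\star},F_{\star})=-(\Im l)\{y,y\}$, where $F_{\star}=F(t,l,-\Im l)[y]$ is the $l$-quasi-derivative column of $y$; and by \eqref{GrindEQ__25_} one has $\vec{y}(t,l,m,f)=F_{\star}-G[f]$ up to null-components, where $G[f]$ collects the $m$-quasi-derivatives $f^{[s-j]}(t|m)$ (the remark after Lemma \ref{lm4} justifies ignoring the null-components when $\Im l$ has order below $r$). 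Expanding $(W(t,l,-\Im l)\vec{y},\vec{y})$ in this splitting yields $-(\Im l)\{y,y\}$ plus cross-terms in $G[f]$. For the second summand I would note that $F(t,l^{*},m)=F(t,l^{*},m^{*})[f]$ (the vector \eqref{GrindEQ__23_} does not involve the coefficients of $m$, only its order), and then apply part 1 with weight $m^{*}$ and functions $y,f$ to extract $\Im(m^{*}\{y,f\})$; the remainder is $\Im(W(t,l^{*},m^{*})(\vec{y}-F(t,l^{*},m^{*})[y]),F(t,l^{*},m^{*})[f])$, in which $\vec{y}-F(t,l^{*},m^{*})[y]$ is again a quasi-derivative correction, now also carrying the difference of the $l$- and $l^{*}$-quasi-derivatives of $y$. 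The identity \eqref{GrindEQ__30_} thus reduces to the cancellation of the two families of cross-terms. Part 2b is the bilinear companion for the pair $\vec{y}_{1}(t,l,m,f_{1})$, $\vec{y}_{2}(t,l^{*},m^{*},f_{2})$: the antisymmetric combination $m\{y_{1},f_{2}\}-m\{f_{1},y_{2}\}$ is produced by the same expansion with the Hermitian pairing replaced by the full bilinear one and with \eqref{GrindEQ__17_} used to interchange the weights $W(t,l,m)$ and $W(t,l^{*},m)$.

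The main obstacle is exactly this cancellation of the quasi-derivative cross-terms, which cannot be settled abstractly: one must insert the explicit quasi-derivative formulae \eqref{GrindEQ__10_}--\eqref{GrindEQ__12_} together with the explicit weight entries \eqref{GrindEQ__8_}, \eqref{GrindEQ__16_} and verify the telescoping by hand. The delicate point is the borderline case $r=s$, where the top-order coefficients $\tilde p_{n},\tilde q_{n},\tilde s_{n}$ produce boundary-type terms that do \emph{not} telescope as the interior ones do; this is precisely the discrepancy flagged in the statement, so the two sides of \eqref{GrindEQ__30_} and \eqref{GrindEQ__31_} can be matched only after these top-order contributions are grouped, not term by term.
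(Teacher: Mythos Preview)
Your approach is essentially the same as the paper's. The paper proves part~1 by the direct matrix computation you describe, and for parts~2a and~2b in the hard case $r=s=2n$ it introduces exactly your splitting $\vec{y}(t,l,m,f)=Y(t,l)-\mathcal{F}(t,m)$ (your $F_\star-G[f]$), applies part~1 to the leading pieces, and for the second summand in \eqref{GrindEQ__30_} inserts the further splitting $\vec{y}=Y(t,l^{*})+(Y(t,l)-Y(t,l^{*}))-\mathcal{F}(t,m)$ that you also identify. The cross-terms are then computed explicitly: in \eqref{GrindEQ__33_}--\eqref{GrindEQ__331_} for part~2a they are both equal to $2\Re(p_n^{*-1}y^{[n]}(t|\Im l),f^{[n]}(t|m))+\Im(p_n^{-1}f^{[n]}(t|m),f^{[n]}(t|m))$, and in \eqref{GrindEQ__34_}--\eqref{GrindEQ__35_} for part~2b both extra terms equal $-(p_n^{-1}f_1^{[n]}(t|m),f_2^{[n]}(t|m^{*}))$, so the cancellation is by identical correction rather than by a telescoping sum. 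For the lower-order cases the paper confirms, as you anticipate, that the corresponding summands already coincide term by term. One ingredient the paper invokes for part~2b that you do not name explicitly is Lemma~\ref{lm4} (formula \eqref{GrindEQ__251_} for $W(t,l^{*},m)F(t,l^{*},m)$), which is what reduces the cross-term to the single $p_n^{-1}$ expression above.
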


\begin{proof}
1. follows from \eqref{GrindEQ__8_}, \eqref{GrindEQ__16_},
\eqref{GrindEQ__23_}, \eqref{GrindEQ__28_}.

2. Let $r=s=2n$. For convenience when using notations of
\eqref{GrindEQ__23_} type we omit the argument $m$. For example by
$F\left(t,l^*\right)$ we denote $F\left(t,l^*,m\right)$.

a) We denote
\begin{equation} \label{GrindEQ__32_}
\mathcal{F}\left(t,m\right)=col\left\{0,...,0,f^{\left[2n-1\right]}
\left(t\left|m\right. \right),...,f^{\left[n\right]}
\left(t\left|m\right. \right)\right\}\in \mathcal{H}^{r}.
\end{equation}
One has
\begin{multline}\label{GrindEQ__33_}
\left(W\left(t,l,-\Im
l\right)\vec{y}\left(t,l,m,f\right),\vec{y}\left(t,l,m,f\right)\right)=\left(W\left(t,l,-\Im
l\right)Y\left(t,l\right),Y\left(t,l\right)\right)-\\
-\left(W\left(t,l,-\Im
l\right)Y\left(t,l\right),\mathcal{F}\left(t,m\right)\right)-\left(W\left(t,l,-\Im
l\right)\mathcal{F}\left(t,m\right),Y\left(t,l\right)\right)+\\
+\left(W\left(t,l,-\Im l\right)\mathcal{F}\left(t,m\right),\mathcal{F}\left(t,m\right)\right)=-\left(\Im l\right)\left[y,y\right]+\\
+2\Re \left(p_{r}^{*-1} y^{\left[n\right]} \left(t|\Im
l\right),f^{\left[n\right]} \left(t|m\right)\right)+\Im
\left(p_{r}^{-1} f^{\left[n\right]}
\left(t|m\right),f^{\left[n\right]} \left(t|m\right)\right).
\end{multline}
Here the last equality follows from \eqref{GrindEQ__17_},
\eqref{GrindEQ__29_}, \eqref{GrindEQ__251_}, \eqref{GrindEQ__7_}.
On the other hand we have
\begin{multline} \label{GrindEQ__331_}
\Im \left(W\left(t,l^{*}
,m^*\right)\vec{y}\left(t,l,m,f\right),F\left(t,l^{*}
\right)\right)=\Im \left(W\left(t,l^{*} ,m^*\right)Y\left(t,l^{*}
\right),F\left(t,l^{*} \right)\right)+\\
+\Im \left(W\left(t,l^{*}
,m^*\right)\left(\left(Y\left(t,l\right)-Y\left(t,l^{*}\right)\right)-
\mathcal{F}\left(t,m\right)\right),F\left(t,l^{*}
\right)\right)=\Im \left(m^*\left\{y,f\right\}\right)+\\
+2\Re \left(p_{r}^{*-1} y^{\left[n\right]} \left(t|\Im
l\right),f^{\left[n\right]} \left(t|m\right)\right)+\Im
\left(p_{n}^{-1} f^{\left[n\right]}
\left(t|m\right),f^{\left[n\right]} \left(t|m\right)\right).
\end{multline}
Here the last equality is proved similarly to \eqref{GrindEQ__33_}
taking into account that $y^{\left[n\right]}
\left(t|l\right)-y^{\left[n\right]} \left(t|l^{*}
\right)=2iy^{\left[n\right]} \left(t|\Im l\right)$. Comparing
\eqref{GrindEQ__33_}, \eqref{GrindEQ__331_} we obtain
\eqref{GrindEQ__30_}.

b) In view of \eqref{GrindEQ__25_}, \eqref{GrindEQ__29_},
\eqref{GrindEQ__17_} and Lemma \ref{lm4} we have
\begin{multline}\label{GrindEQ__34_}
\left(W\left(t,l,m\right)\vec{y}_1\left(t,l,m,f_{1} \right),F_{2}
\left(t,l\right)\right)=m\left\{y_{1} \left(t,l,m,f_{1}
\right),f_{2} \right\}-\\- \left(\mathcal{F}_{1}
\left(t,m\right),H\left(t,l^{*} \right)col\left\{0,\, ...,0,f_{2}
^{\left[n\right]} \left(t\left|m^{*} \right.
\right)\right\}\right)=\\
 =m\left\{y_{1} ,f_{2}
\right\}-\left(p_{n}^{-1} f_{1}^{\left[n\right]}
\left(t\left|m\right. \right),f_{2}^{\left[n\right]}
\left(t\left|m^{*} \right. \right)\right).
\end{multline}
Similarly
\begin{multline} \label{GrindEQ__35_}
\left(W\left(t,l^{*} ,m\right)F_1\left(t,l^{*} \right),\vec{y}_{2}
\left(t,l^{*} ,m^{*} ,f_{2} \right)\right) =m\left\{f_{1} ,y_{2}
\right\}-\left(p_{n}^{-1} f_{1}^{\left[n\right]}
\left(t\left|m\right. \right),f_{2}^{\left[n\right]}
\left(t\left|m^{*} \right. \right)\right)
\end{multline}

Comparing \eqref{GrindEQ__34_}, \eqref{GrindEQ__35_} we obtain
\eqref{GrindEQ__31_}.

For $r=2n+1,\, s=2n$ or $r=2n+1\vee 2n,\, s<2n$, the corresponding
terms in \eqref{GrindEQ__30_}, \eqref{GrindEQ__31_} coincide in
view of \eqref{GrindEQ__8_}, \eqref{GrindEQ__16_},
\eqref{GrindEQ__23_}, \eqref{GrindEQ__25_}, \eqref{GrindEQ__29_}.
For example in these cases
\begin{gather*}
\left(W\left(t,l,-\Im
l\right)\vec{y}\left(t,l,m,f\right),\vec{y}\left(t,l,m,f\right)\right)=\left(\left(W\left(t,l,-\Im
l\right)\right)Y\left(t,l\right),Y\left(t,l\right)\right)=-\left(\Im
l\right)\left\{y,y\right\} \end{gather*}

Theorem \ref{th2} is proved.\end{proof}

Let us notice that Theorem \ref{th2} remains valid if
null-components in $F_k(t,l,m)$, $F(t,l^*,m)$, $F_1(t,l^*,m)$ we
change by any $\mathcal{H}$-valued vector-functions.

\begin{theorem}[The Green formula]\label{th3} Let $\mathrm{l}_{k} ,\, \mathrm{m}_{k} \,
\left(k=1,2\right)$ are differential expressions of $l$
\eqref{GrindEQ__51+_}, $m$ type correspondingly. The orders of
$\mathrm{l}_{k} $ are equal to $r$, the orders $\mathrm{m}_{k} $
are different in general, even and are equal to $s_{k}\leq r$. Let
$y_{k} \left(t\right)\in C^{r} \left(\left[\alpha ,\, \beta
\right],\, \mathcal{H}\right)$, $f_{k} \left(t\right)\in C^{s_{k}
} \left(\left[\alpha ,\, \beta \right],\, \mathcal{H}\right)$, and
$\mathrm{l}_{k} \left[y_{k} \right]=\mathrm{m}_{k} \left[f_{k}
\right],\, \, k=1,\, 2$. Then
\begin{multline} \label{GrindEQ__36_}
\int _{\alpha }^{\beta }\mathrm{m}_{1} \left\{f_{1} ,y_{2}
\right\} dt-\int _{\alpha }^{\beta }\mathrm{m}_{2}^{*}
\left\{y_{1} ,f_{2} \right\}dt -\int _{\alpha }^{\beta
}\left(\mathrm{l}_{1} -\mathrm{l}_{2}^{*} \right)\left\{y_{1}
,y_{2} \right\}dt =
\\\left. =\left(\frac{i}{2} \left(Q\left(t,\mathrm{l}_{1}
\right)+Q_{}^{*} \left(t,\mathrm{l}_{2} \right)\right)\vec{y}_{1}
\left(t,\, \mathrm{l}_{1} ,\, \mathrm{m}_{1} ,\, f_{1} \right),\,
\vec{y}_{2} \left(t,\, \mathrm{l}_{2} ,\, \mathrm{m}_{2} ,f_{2}
\right)\right)\right|_\alpha^\beta,
\end{multline}
where $Q(t,\mathrm{l}_k)$,
$\vec{y}_k(t,\mathrm{l}_k,\mathrm{m}_k,f_k)$ correspond to
equations $\mathrm{l}_{k} \left[y\right]=\mathrm{m}_{k}
\left[f\right]$ by formulae \eqref{GrindEQ__6_},
\eqref{GrindEQ__14_}, \eqref{GrindEQ__25_} with
$\mathrm{l}_{k},\mathrm{m}_{k},y_{k},f_{k} $ instead of $l, m, y,
f$ correspondingly.
\end{theorem}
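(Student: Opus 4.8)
The plan is to reduce both sets of data to first-order canonical systems by Theorem \ref{th1} and then to integrate a pointwise Lagrange identity whose primitive is exactly the boundary form on the right of \eqref{GrindEQ__36_}. First, by Lemma \ref{lm3} it suffices to prove \eqref{GrindEQ__36_} for $f_k,y_k\in C^\infty$: both sides are continuous with respect to convergence of $f_k$ in $C^{s_k}$ and of $y_k$ in $C^r$ on the compact segment $[\alpha,\beta]$, so the general case follows by passing to the limit. From now on I assume the data smooth enough to differentiate all quasi-derivatives freely.

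By Theorem \ref{th1}, for $k=1,2$ the vector-functions $\phi_k:=\vec{y}_k(t,\mathrm{l}_k,\mathrm{m}_k,f_k)$ satisfy
\begin{equation*}
\frac{i}{2}\left((Q(t,\mathrm{l}_k)\phi_k)' + S(t,\mathrm{l}_k)\phi_k'\right) + H(t,\mathrm{l}_k)\phi_k = W(t,\mathrm{l}_k^*,\mathrm{m}_k)F(t,\mathrm{l}_k^*,\mathrm{m}_k).
\end{equation*}
I would introduce the boundary form $B(t)=\frac{i}{2}((Q(t,\mathrm{l}_1)+Q^*(t,\mathrm{l}_2))\phi_1,\phi_2)$ occurring in \eqref{GrindEQ__36_} and compute $B'(t)$ by the product rule, eliminating $(Q(t,\mathrm{l}_1)\phi_1)'$ and $(Q(t,\mathrm{l}_2)\phi_2)'$ via the two systems. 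After this substitution $B'(t)$ splits into ``weight terms'' $(W(t,\mathrm{l}_1^*,\mathrm{m}_1)F_1,\phi_2)-(\phi_1,W(t,\mathrm{l}_2^*,\mathrm{m}_2)F_2)$ and ``coefficient terms'' assembled from $H$, $S$ and the derivatives of $Q$. The assertion \eqref{GrindEQ__36_} is then equivalent to the pointwise identity $B'(t)=\mathrm{m}_1\{f_1,y_2\}-\mathrm{m}_2^*\{y_1,f_2\}-(\mathrm{l}_1-\mathrm{l}_2^*)\{y_1,y_2\}$, which upon integration over $[\alpha,\beta]$ gives the result.

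It remains to identify the two groups of terms. For the weight terms I would use part 1 of Theorem \ref{th2} (formula \eqref{GrindEQ__29_}), Lemma \ref{lm4} (formula \eqref{GrindEQ__251_}) and the symmetry relations \eqref{GrindEQ__17_}, mirroring the computations \eqref{GrindEQ__34_}--\eqref{GrindEQ__35_}; this expresses them as $\mathrm{m}_1\{f_1,y_2\}-\mathrm{m}_2^*\{y_1,f_2\}$ plus correction terms carried by the quasi-derivatives $f_k^{[n]}(t|\mathrm{m}_k)$. For the coefficient terms I would rewrite $(\phi_1,H(t,\mathrm{l}_2)\phi_2)=(H(t,\mathrm{l}_2^*)\phi_1,\phi_2)$ using \eqref{GrindEQ__17_}, so that they collapse to the action of $H(t,\mathrm{l}_1)-H(t,\mathrm{l}_2^*)$ (together with the $S$- and $Q'$-contributions), and then invoke the link between $H$ and the Dirichlet form behind Lemma \ref{lm1} and \eqref{GrindEQ__231_} to recognize this combination as $-(\mathrm{l}_1-\mathrm{l}_2^*)\{y_1,y_2\}$ plus matching correction terms. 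The corrections from both groups cancel, precisely as the analogous terms cancelled between \eqref{GrindEQ__33_} and \eqref{GrindEQ__331_} in the proof of Theorem \ref{th2}.

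The hard part will be this coefficient-term bookkeeping, which I expect to carry out separately for even $r=2n$ (where $Q=S=J/i$ is constant and the system reduces to the Hamiltonian form $J\phi'+H\phi=WF$, so $B(t)=(J\phi_1,\phi_2)$ and the Lagrange identity is clean) and for odd $r=2n+1$ (where $Q,S$ depend on $t$ and differ, producing extra $Q'$- and $S$-contributions that must be absorbed into the Dirichlet form). The most delicate case is $r=s$: there, as already flagged in Theorem \ref{th2}, the quasi-derivative corrections on the two sides do not match term-by-term, and one must track the $f_k^{[n]}(t|\mathrm{m}_k)$-contributions through $H(t,\mathrm{l}_k)$ to obtain the final cancellation. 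Once the pointwise identity is verified in all these cases, integration over $[\alpha,\beta]$ together with the limit furnished by Lemma \ref{lm3} completes the proof.
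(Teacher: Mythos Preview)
Your plan is correct and matches the paper's approach almost exactly: the paper writes down the first-order Green identity \eqref{GrindEQ__38_} (your integrated $B'(t)$), then identifies the weight terms via \eqref{GrindEQ__29_}, \eqref{GrindEQ__251_} as in \eqref{GrindEQ__39_}--\eqref{GrindEQ__40_}, the coefficient terms via the $H$-difference as in \eqref{GrindEQ__41_}, and carries out the $f_k^{[n]}(t|\mathrm{m}_k)$-cancellation you anticipate in \eqref{GrindEQ__42_}--\eqref{GrindEQ__45_}, treating even and odd $r$ separately. The one thing to flag is that Lemma~\ref{lm1} and \eqref{GrindEQ__231_} concern $\Im H(t,l)$ for a \emph{single} expression; for two expressions $\mathrm{l}_1,\mathrm{l}_2$ the paper isolates the needed identity $((H(t,\mathrm{l}_1)-H(t,\mathrm{l}_2^*))\vec{g}_1,\vec{g}_2)=-(\mathrm{l}_1-\mathrm{l}_2^*)\{g_1,g_2\}$ (plus an odd-order correction) as a separate Lemma~\ref{lm5}, proved by the same $A+B$ decomposition \eqref{GrindEQ__19_}--\eqref{GrindEQ__2311_} used for Lemma~\ref{lm1}---this is exactly the ``link behind Lemma~\ref{lm1}'' you allude to, so be prepared to state and prove it in that generality.
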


\begin{proof}
We need the following
\begin{lemma}\label{lm5}
For sufficiently smooth vector-function $g_{1} \left(t\right),\,
g_{2} \left(t\right)$ one has
\begin{multline} \label{GrindEQ__37_}
\left(\left(H\left(t,\mathrm{l}_{1}
\right)-H\left(t,\mathrm{l}_{2}^{*} \right)\right)\vec{g}_{1}
\left(t,\mathrm{l}_{1},\mathrm{m}_1,0\right),\vec{g}_{2}
\left(t,\mathrm{l}_{2},\mathrm{m}_2
,0\right)\right)=\\=\begin{cases} -\left(\mathrm{l}_{1}
-\mathrm{l}_{2}^{*} \right)\left\{g_{1} ,g_{2} \right\},& r=2n \\
-\left(\mathrm{l}_{1} -\mathrm{l}_{2}^{*} \right)\left\{g_{1}
,g_{2} \right\}+\left(\mathrm{l}_{2n+1}^{1}
-\mathrm{l}_{2n+1}^{2^{*} } \right)\left\{g_{1} ,g_{2} \right\},&
r=2n+1,
\end{cases}
\end{multline}
where $\mathrm{l}_{2n+1}^{k} $ are the analogs of $l_{2n+1}$.
\end{lemma}

\begin{proof} Let $r=2n$. Then in view of
\eqref{GrindEQ__19_}-\eqref{GrindEQ__2311_}, \eqref{GrindEQ__25_},
\eqref{GrindEQ__9_}, \eqref{GrindEQ__17_} we have
\begin{multline*}
\left(\left(H\left(t,\mathrm{l}_{1}
\right)-H\left(t,\mathrm{l}_{2}^{*} \right)\right)\vec{g}_{1}
\left(t,\mathrm{l}_{1},\mathrm{m}_1 ,0\right),\vec{g}_{2}
\left(t,\mathrm{l}_{2},\mathrm{m}_2 ,0\right)\right)=\\
=\left(\left(A\left(t,\mathrm{l}_{1}
\right)-A\left(t,\mathrm{l}_{2}^* \right)\right)\vec{g}_{1}
\left(t,\mathrm{l}_{1},\mathrm{m}_1 ,0\right),\vec{g}_{2}
\left(t,\mathrm{l}_{2},\mathrm{m}_2 ,0\right)\right)+\\
+\left(C^{*} \left(t,\mathrm{l}_{2} \right)B\left(t,\mathrm{l}_{1}
\right)C\left(t,\mathrm{l}_{1} \right)col\left\{g_{1} ,g'_{1} ,\,
...,\, g_{1}^{\left(n-1\right)} ,g_{1}^{\left(2n-1\right)} ,\,
...,\,
g_{1}^{\left(n\right)} \right\}\right.,\\
\left. col\left\{g_{2} ,g'_{2} ,\, ...,\, g_{2}^{\left(n-1\right)}
,g_{2}^{\left(2n-1\right)} ,\, ...,\, g_{2}^{\left(n\right)}
\right\}\right)-\\
-\left(col\left\{g_{1} ,g'_{1} ,\ldots ,g_{1}^{\left(2n-1\right)}
,\ldots ,g_{1}^{\left(n\right)} \right\},C^{*}
\left(t,\mathrm{l}_{1} \right)B\left(t,\mathrm{l}_{2}
\right)C\left(t,\mathrm{l}_{2} \right)col\left\{g_{2} ,g'_{2}
,\ldots ,g_{2}^{\left(n-1\right)} ,g_{2}^{\left(2n-1\right)}
,\ldots ,g_{2}^{\left(n\right)} \right\}\right)=
\\=-\left(\mathrm{l}_{1} -\mathrm{l}_{2}^{*}
\right)\left\{f,g\right\}.
\end{multline*}

The proof of \eqref{GrindEQ__37_} for $r=2n+1$ follows directly
from \eqref{GrindEQ__15_}, \eqref{GrindEQ__25_}. Lemma \ref{lm5}
is proved.
\end{proof}

Now Green formula \eqref{GrindEQ__36_} we obtain from the
following Green formula for the equation \eqref{GrindEQ__24_} that
correspond to equations
$\mathrm{l}_{k}\left[y\right]=\mathrm{m}_{k} \left[f\right]$
\begin{multline}\label{GrindEQ__38_}
\int _{\alpha }^{\beta }\left(W\left(t,\mathrm{l}_{1}^{*}
,\mathrm{m}_{1} \right)F_{1} \left(t,\mathrm{l}_{1}^{*}
,\mathrm{m}_{1} \right),\vec{y}_{2}
\left(t,\mathrm{l}_{2} ,\mathrm{m}_{2} ,f_{2} \right)\right)dt -\\
-\int _{\alpha }^{\beta }\left(W\left(t,\mathrm{l}_{2}^{*}
,\mathrm{m}_{2}^* \right)\vec{y}_{1} \left(t,\mathrm{l}_{1}
,\mathrm{m}_{1} ,f_{1} \right),F_{2} \left(t,\mathrm{l}_{2}^{*}
,\mathrm{m}_{2} \right)\right)dt+
\\
+\int _{\alpha }^{\beta }\left(\left(H\left(t,\mathrm{l}_{1}
\right)-H\left(t,\mathrm{l}_{2}^{*} \right)\right)\vec{y}_{1}
\left(t,\mathrm{l}_{1} ,\mathrm{m}_{1} ,f_{1} \right),\vec{y}_{2}
\left(t,\mathrm{l}_{2} ,\mathrm{m}_{2} f_{2}
\right)\right)dt -\\
-\int _{\alpha }^{\beta }\frac{i}{2}
\left\{\left((S\left(t,\mathrm{l}_{1} \right)-Q^{*}
\left(t,\mathrm{l}_{2} \right))\vec{y}\hspace{2px}'_{1}
\left(t,\mathrm{l}_{1} ,\mathrm{m}_{1} ,f_{1} \right),\vec{y}_{2}
\left(t,\mathrm{l}_{2} ,\mathrm{m}_{2} ,f_{2}
\right)\right)\right. - \\\left.
-\left(\left(Q\left(t,\mathrm{l}_{1} \right)-S^{*}
\left(t,\mathrm{l}_{2} \right)\right)\vec{y}_{1}
\left(t,\mathrm{l}_{1} ,\mathrm{m}_1,f_{1}
\right),\vec{y}\hspace{2px}'_{2} \left(t,\mathrm{l}_{2}
,\mathrm{m}_{2} ,f_{2} \right)\right)\right\}dt=\\\left.
=\left(\frac{i}{2} \left(Q\left(t,\mathrm{l}_{1} \right)+Q^{*}
\left(t,\mathrm{l}_{2} \right)\right)\vec{y}_{1}
\left(t,\mathrm{l}_{1} ,\mathrm{m}_1,f_{1} \right),\vec{y}_{2}
\left(t,\mathrm{l}_{2} ,\mathrm{m}_{2}, f_{2}
\right)\right)\right|_\alpha^\beta.
\end{multline}

Let $r=s_k=2n$. For convenience by
$F_k\left(t,\mathrm{l}_k^*\right),Y_k(t,\mathrm{l}_k)$ we denote
$F_k\left(t,\mathrm{l}_k^*,\mathrm{m}_k\right),Y_k(t,\mathrm{l}_k,\mathrm{m}_k)$
correspondingly. Then in view of \eqref{GrindEQ__7_},
\eqref{GrindEQ__25_}, \eqref{GrindEQ__251_}, \eqref{GrindEQ__29_},
\eqref{GrindEQ__37_} one has:
\begin{multline}
\label{GrindEQ__39_} \left(W\left(t,\mathrm{l}_{1}^{*}
,\mathrm{m}_{1} \right)F_{1} \left(t,\mathrm{l}_{1}^{*}
\right),\vec{y}_{2} \left(t,\mathrm{l}_{2} ,\mathrm{m}_{2} ,f_{2}
\right)\right)=\mathrm{m}_{1} \left\{f_{1} ,y_{2} \right\}+
\\
+\left(H\left(t,\mathrm{l}_{1} \right)col\left\{0,\,
...,0,f_{1}^{\left[n\right]} \left(t\left|\mathrm{m}_{1} \right.
\right)\right\},col\left\{0,\, ...,0,y_{2}^{\left[n\right]}
\left(t\left|\mathrm{l}_{2} \right. \right)-y_{2}^{\left[n\right]}
\left(t\left|\mathrm{l}_{1}^{*} \right.
\right)-f_{2}^{\left[n\right]} \left(t\left|\mathrm{m}_{2} \right.
\right)\right\}\right);
\end{multline}
\begin{multline}\label{GrindEQ__40_}
\left(W\left(t,\mathrm{l}_{2}^{*} ,\mathrm{m}_{2}^{*}
\right)\vec{y}_{1} \left(t,\mathrm{l}_{1} ,\mathrm{m}_{1} ,f_{1}
\right),F_{2} \left(t,\mathrm{l}_{2}^{*}
\right)\right)=\mathrm{m}_{2}^{*} \left\{y_{1} ,f_{2} \right\}+\\
+\left(H\left(t,\mathrm{l}_{2}^{*}
\right)col\left\{0,...,0,y_{1}^{\left[n\right]}
\left(t\left|\mathrm{l}_{1} \right. \right)-y_{1}^{\left[n\right]}
\left(t\left|\mathrm{l}_{2}^{*} \right.
\right)-f_{1}^{\left[n\right]} \left(t\left|\mathrm{m}_{1} \right.
\right)\right\},col\left\{0,...,0,f_{2}^{\left[n\right]}
\left(t\left|\mathrm{m}_{2} \right. \right)\right\}\right);
\end{multline}
\begin{multline}
\label{GrindEQ__41_} \left(\left(H\left(t,\mathrm{l}_{1}
\right)-H\left(t,\mathrm{l}_{2}^{*} \right)\right)\vec{y}_{1}
\left(t,\mathrm{l}_{1} ,\mathrm{m}_{1} ,f_{1} \right),\vec{y}_{2}
\left(t,\mathrm{l}_{2} ,\mathrm{m}_{2} ,f_{2}
\right)\right)=-\left(\mathrm{l}_{1} -\mathrm{l}_{2}^{*}
\right)\left\{y_{1} ,y_{2}
\right\}-\\
-\left(\left(H\left(t,\mathrm{l}_{1}\right)-H\left(t,\mathrm{l}_{2}^{*}
\right)\right)Y_{1} \left(t,\mathrm{l}_{1} \right),\mathcal{F}_{2}
\left(t,\mathrm{m}_{2}
\right)\right)-\left(\left(H\left(t,\mathrm{l}_{1}
\right)-H\left(t,\mathrm{l}_{2}^{*}
\right)\right)\mathcal{F}_1\left(t,\mathrm{m}_{1}
\right),Y_{2} \left(t,\mathrm{l}_{2} \right)\right)+\\
+\left(\left(H\left(t,\mathrm{l}_{1}
\right)-H\left(t,\mathrm{l}_{2}^{*}
\right)\right)col\left\{0,\ldots ,0,f_{1}^{\left[n\right]}
\left(t\left|\mathrm{m}_{1} \right.
\right)\right\},col\left\{0,\ldots ,0,f_{2}^{\left[n\right]}
\left(t\left|\mathrm{m}_{2} \right. \right)\right\}\right).
\end{multline}
where $\mathcal{F}_k(t,\mathrm{m}_k)$ are the analogs of
\eqref{GrindEQ__32_}.

Let us denote by $p_{j}^{k} ,q_{j}^{k} ,s_{j}^{k}$ the
coefficients of $\mathrm{l}_{j} $. Then in view of
\eqref{GrindEQ__7_}
\begin{multline}\label{GrindEQ__42_}
\left(H\left(t,\mathrm{l}_{1} \right)col\left\{0,\ldots
,0,f_{1}^{\left[n\right]} \left(t\left|\mathrm{m}_{1} \right.
\right)\right\},col\left\{0,\ldots ,0,y_{2}^{\left[n\right]}
\left(t\left|\mathrm{l}_{2} \right. \right)-y_{2}^{\left[n\right]}
\left(t\left|\mathrm{l}_{1}^{*} \right. \right)\right\}\right)=\\
=\left((p_n^1)^{-1} f_{1}^{\left[n\right]}
\left(t\left|\mathrm{m}_{1} \right. \right),y_{2}^{\left[n\right]}
\left(t\left|\mathrm{l}_{2} \right. \right)-y_{2}^{\left[n\right]}
\left(t\left|\mathrm{l}_{1}^{*} \right. \right)\right),
\end{multline}
and
\begin{multline}\label{GrindEQ__43_}
\left(col\left\{0,\ldots ,0,y_{1}^{\left[n\right]}
\left(t\left|\mathrm{l}_{1} \right. \right)-y_{1}^{\left[n\right]}
\left(t\left|\mathrm{l}_{2}^{*} \right.
\right)\right\},H\left(t,\mathrm{l}_{2} \right)col\left\{0,\ldots
,0,f_{2}^{\left[n\right]} \left(t\left|\mathrm{m}_{2} \right.
\right)\right\}\right)=\\
=\left(y_{1}^{\left[n\right]} \left(t\left|\mathrm{l}_{1} \right.
\right)-y_{1}^{\left[n\right]} \left(t\left|\mathrm{l}_{2}^{*}
\right. \right),(p_n^2)^{-1} f_{2}^{\left[n\right]}
\left(t\left|\mathrm{m}_{2} \right. \right)\right).
\end{multline}

On the another hand in view of \eqref{GrindEQ__7_},
\eqref{GrindEQ__11_} we have
\begin{multline} \label{GrindEQ__44_}
-\left(\left(H\left(t,\mathrm{l}_{1}
\right)-H\left(t,\mathrm{l}_{2}^{*} \right)\right)Y_{1}
\left(t,\mathrm{l}_{1} \right),\mathcal{F}_{2}
\left(t,\mathrm{m}_{2} \right)\right)=\\
=-\left(\left({i(p_{n}^1)^{-1} q_{n}^{1}
\mathord{\left/{\vphantom{ip_{n}^{1-1} q_{n}^{1}
2}}\right.\kern-\nulldelimiterspace} 2} -{i(p_n^{2*})^{-1}
s_{n}^{2*} \mathord{\left/{\vphantom{i(p_{n}^{2*})^{-1} s_{n}^{2*}
2}}\right.\kern-\nulldelimiterspace} 2}
\right)y_{1}^{\left(n-1\right)} +\left((p_{n}^1)^{-1}
-(p_{n}^{2*})^{-1} \right)y_{1}^{\left[n\right]}
\left(t\left|\mathrm{l}_{1} \right. \right),f_{2}^{\left[n\right]}
\left(t\left|\mathrm{m}_{2} \right.
\right)\right)=\\
=\left(\left(p_{n}^{2*}\right)^{-1} \left(y_{1}^{\left[n\right]}
\left(t\left|\mathrm{l}_{1} \right. \right)-y_{1}^{\left[n\right]}
\left(t\left|\mathrm{l}_{2}^{*} \right.
\right)\right),f_{2}^{\left[n\right]} \left(t\left|\mathrm{m}_{2}
\right. \right)\right),
\end{multline}
where the last equality is a corollary of \eqref{GrindEQ__11_} and
its following modification:
$$
(p_{n}^{1})^{-1} y_{1}^{\left[n\right]}
\left(t\left|\mathrm{l}_{1} \right. \right)=y_{1}^{\left(n\right)}
-\frac{i}{2} (p_{n}^{1})^{-1} q^1_{n} y_{1}^{\left(n-1\right)}$$

Analogously it can be proved that
\begin{multline} \label{GrindEQ__45_}
\left(\left(H\left(t,\mathrm{l}_{1}
\right)-H\left(t,\mathrm{l}_{2}^{*} \right)\right)\mathcal{F}_{1}
\left(t,\mathrm{m}_{1} \right),Y_{2} \left(t,\mathrm{l}_{2}
\right)\right)=\\
=\left(f_{1}^{\left[n\right]} \left(t\left|\mathrm{m}_{1} \right.
\right),(p_{n}^{1*})^{-1} \left(y_{2}^{\left[n\right]}
\left(t\left|\mathrm{l}_{2} \right. \right)-y_{2}^{\left[n\right]}
\left(t\left|\mathrm{l}_{1}^{*} \right. \right)\right)\right).
\end{multline}

Comparing \eqref{GrindEQ__38_}--\eqref{GrindEQ__45_} we get
\eqref{GrindEQ__36_} since the last $\int_\alpha^\beta$  in the
left-hand-side of \eqref{GrindEQ__38_} is equal to zero if $r=2n$
in view of \eqref{GrindEQ__6_}.

For $s<r=2n$ the proof of \eqref{GrindEQ__36_} easy follows from
\eqref{GrindEQ__23_}, \eqref{GrindEQ__25_}, \eqref{GrindEQ__29_},
\eqref{GrindEQ__37_}, \eqref{GrindEQ__38_} in view of footnote
\ref{foot1}.

Now let $r=2n+1$. Then the last $\int_\alpha^\beta$ in the
left-hand-side of \eqref{GrindEQ__38_} is equal to $\int _{\alpha
}^{\beta }\left(\mathrm{l}_{2n+1}^{1} -\mathrm{l}_{2n+1}^{2*}
\right)\left\{y_{1},y_2\right\} dt$. Hence the proof of
\eqref{GrindEQ__36_} for $s\le 2n<r=2n+1$ follows from
\eqref{GrindEQ__16_}, \eqref{GrindEQ__23_}, \eqref{GrindEQ__25_},
\eqref{GrindEQ__29_}, \eqref{GrindEQ__37_}, \eqref{GrindEQ__38_}.
Theorem \ref{th3} is proved.

\end{proof}

\begin{remark}\label{rem1} In view of Lemmas \ref{lm2}, \ref{lm3} all results of this item are valid if
the condition of evenness of $s$ is changed by the condition $s\le
2\left[\frac{r}{2} \right]$.
\end{remark}

\section{Characteristic operator}

We consider an operator differential equation in separable Hilbert
space $\mathcal{H}_{1} $:
\begin{equation} \label{GrindEQ__46_}
\frac{i}{2} \left(\left(Q\left(t\right)x\left(t\right)\right)^{{'}
} +Q^{*} \left(t\right)x'\left(t\right)\right)-H_{\lambda }
\left(t\right)x\left(t\right)=W_{\lambda }
\left(t\right)F\left(t\right),\quad t\in \bar{\mathcal{I}},
\end{equation}
where $Q\left(t\right),\, \left[\Re \, Q\left(t\right)\right]^{-1}
,\, H_{\lambda } \left(t\right)\in B\left(\mathcal{H}_{1}
\right),\, Q\left(t\right)\in C^{1}
\left(\bar{\mathcal{I}},B\left(\mathcal{H}_{1} \right)\right)$;
the operator function $H_{\lambda } \left(t\right)$ is continuous
in $t$ and is Nevanlinna's in $\lambda $. Namely the following
condition holds:

(\textbf{A}) The set $\mathcal{A}\supseteq \mathbb{C}\setminus
\mathbb{R}^1$ exists, any its point have a neighbourhood
independent of $t\in \bar{\mathcal{I}}$, in this neighbourhood
$H_{\lambda } \left(t\right)$ is analytic $\forall t\in
\bar{\mathcal{I}};\, \forall \lambda \in \mathcal{A}\, H_{\lambda
} \left(t\right)=H^*_{\bar \lambda}(t)\in
C\left(\bar{\mathcal{I}},B\left(\mathcal{H}_1\right)\right)$; the
weight $W_{\lambda } \left(t\right)=\Im H_{\lambda }
\left(t\right)/\Im \lambda \ge 0\left(\Im  \lambda \ne 0\right)$.

In view of \cite{Khrab5} $\forall \mu \in \mathcal{A}\bigcap
\mathbb{R}^1:\, W_{\mu } \left(t\right)=\left.\partial H_{\lambda}
\left(t\right)/\partial \lambda\right|_{\lambda=\mu} $ is Bochner
locally integrable in the uniform operator topology.

For convenience we suppose that $0\in \bar{\mathcal{I}}$ and we
denote $\Re \, Q\left(0\right)=G$.

Let $X_{\lambda } \left(t\right)$ be the operator solution of
homogeneous equation \eqref{GrindEQ__46_} satisfying the initial
condition $X_{\lambda } \left(0\right)=I$, where $I$ is an
identity operator in $\mathcal{H}_{1} $. Since
$H_\lambda(t)=H^*_{\bar\lambda}(t)$ then
\begin{gather}
\label{GrindEQ__47+_} X^*_{\bar\lambda}(t)[\Re
Q(t)]X_\lambda(t)=G,\ \lambda\in \mathcal{A}.
\end{gather}

For any $\alpha ,\, \beta \in \bar{\mathcal{I}},\, \alpha \le
\beta $ we denote $\Delta _{\lambda } \left(\alpha ,\beta
\right)=\int _{\alpha }^{\beta }X_{\lambda }^{*}
\left(t\right)W_{\lambda } \left(t\right)X_{\lambda }
\left(t\right) dt$,\\ $N=\left\{h\in \mathcal{H}_{1} \left|h\in
Ker\Delta _{\lambda } \left(\alpha ,\beta \right)\right. \forall
\alpha ,\beta \right\},P$ is the ortho-projection onto $N^{\bot }
$. $N$ is independent of $\lambda \in \mathcal{A}$ \cite{Khrab5}.

For $x\left(t\right)\in \mathcal{H}_{1} $ or $x\left(t\right)\in
B\left(\mathcal{H}_{1} \right)$ we denote
$U\left[x\left(t\right)\right]=\left(\left[\Re \,
Q\left(t\right)\right]x\left(t\right),x\left(t\right)\right)$ or
$U\left[x\left(t\right)\right]=x^{*} \left(t\right)\left[\Re \,
Q\left(t\right)\right]x\left(t\right)$ respectively.

As in \cite{Khrab4,Khrab5} we introduce the following
\begin{definition}
An analytic operator-function $M\left(\lambda \right)=M^{*}
\left(\bar{\lambda }\right)\in B\left(\mathcal{H}_{1} \right)$ of
non-real $\lambda $ is called a characteristic operator of
equation \eqref{GrindEQ__46_} on $\mathcal{I}$ (or, simply, c.o.),
if for $\Im  \lambda \ne 0$ and for any $\mathcal{H}_{1} $ -
valued vector-function $F\left(t\right)\in L_{W_{\lambda } }^{2}
\left(\mathcal{I}\right)$ with compact support the corresponding
solution $x_{\lambda } \left(t\right)$ of equation
\eqref{GrindEQ__46_} of the form
\begin{equation} \label{GrindEQ__47_}
x_{\lambda } \left(t,F\right)=\mathcal{R}_\lambda F=\int
_{\mathcal{I}}X_{\lambda } \left(t\right) \left\{M\left(\lambda
\right)-\frac{1}{2} sgn\left(s-t\right)\left(iG\right)^{-1}
\right\}X_{\bar{\lambda }}^{*} \left(s\right)W_{\lambda }
\left(s\right)F\left(s\right)ds
\end{equation}
satisfies the condition
\begin{gather}\label{GrindEQ__47++_}
\left(\Im  \lambda \right)\mathop{\lim }\limits_{\left(\alpha
,\beta \right)\uparrow \mathcal{I}} \left(U\left[x_{\lambda }
\left(\beta ,F\right)\right]-U\left[x_{\lambda } \left(\alpha
,F\right)\right]\right)\le 0\, \, \, \left(\Im  \lambda \ne
0\right).
\end{gather}
\end{definition}

Let us note that in \cite{Khrab5} c.o. was defined if
$Q(t)=Q^*(t)$. Our case is equivalent to this one since equation
\eqref{GrindEQ__46_} coincides with equation of
\eqref{GrindEQ__46_} type with $\Re Q(t)$ instead of $Q(t)$ and
with $H_\lambda(t)-{1\over 2}\Im Q'(t)$ instead of $H_\lambda(t)$.

The properties of c.o. and sufficient conditions of the c.o.'s
existence are obtained in \cite{Khrab4,Khrab5}.

In the case $\mathrm{dim}\mathcal{H}_1<\infty$,
$Q(t)=\mathcal{J}=\mathcal{J}^*=\mathcal{J}^{-1}$, $-\infty<a=c$
the description of c.o.'s was obtained in \cite{Orlov} (the
results of \cite{Orlov} were specified and supplemented in
\cite{Khrab+}). In the case $\mathrm{dim} \mathcal{H}_1=\infty$
and $\mathcal{I}$ is finite the description of c.o.'s was obtained
in \cite{Khrab5}. These descriptions are obtained under the
condition that
\begin{gather}\label{star8}
\exists\lambda_0\in \mathcal{A},\ [\alpha,\beta]\subseteq
\overline{\mathcal{I}}:\ \Delta_{\lambda_0}(\alpha,\beta)\gg 0.
\end{gather}

\begin{definition}\label{def+}
\cite{Khrab4,Khrab5} Let  $M\left(\lambda \right)$ be the c.o. of
equation (\ref{GrindEQ__46_}) on  $\mathcal{I} $. We say that the
corresponding condition (\ref{GrindEQ__47++_}) is separated for
nonreal $\lambda =\mu _{0} $ if for any $\mathcal{H}_1$-valued
vector function $f\left(t\right)\in L_{W_{\mu_{0} }(t) }^{2}
\left(\mathcal{I} \right)$ with compact support the following
inequalities holds simultaneously for the solution $x_{\mu _{0} }
\left(t\right)$ (\ref{GrindEQ__47_}) of equation
(\ref{GrindEQ__46_}):
\begin{gather}
\label{12} \displaystyle \lim\limits_{\alpha\downarrow a}\Im\mu
_{0} U\left[x_{\mu _{0} } \left(\alpha\right)\right]\ge 0,\quad
\mathop{\lim }\limits_{\beta \uparrow b} \Im\mu _{0} U\left[x_{\mu
_{0} } \left(\beta \right)\right]\le 0.
\end{gather}
\end{definition}

\begin{theorem}\cite{Khrab4,Khrab5} (see also \cite{RBKholkin})\label{th++}
Let $M\left(\lambda \right)$ be the c.m. of equation
(\ref{GrindEQ__46_}). We represent  $M\left(\lambda \right)$ in
the form
\begin{gather}
\label{13} \displaystyle M\left(\lambda \right)=\left(\mathcal{P}
\left(\lambda \right)-\frac{1}{2} I\right)\left(iG\right)^{-1}.
\end{gather}

Then the condition (\ref{GrindEQ__47++_}) corresponding to
$M\left(\lambda \right)$ is separated for $\lambda =\mu _{0} $ if
and only if the operator $\mathcal{P} \left(\mu_{0} \right)$ is
the projection, i.e.
\begin{gather}
\label{14} \displaystyle \mathcal{P} \left(\mu _{0}
\right)=\mathcal{P} ^{2} \left(\mu _{0} \right).
\end{gather}
\end{theorem}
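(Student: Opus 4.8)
The plan is to work directly from the definition of separatedness and the representation \eqref{13}. First I would unpack what the two one‑sided limits in \eqref{12} mean in terms of the boundary form $U$. For the solution $x_{\mu_0}(t,F)$ given by \eqref{GrindEQ__47_}, I would compute $U[x_{\mu_0}(t,F)]$ explicitly at a point $t$ by inserting the integral formula and using the identity \eqref{GrindEQ__47+_}, namely $X^*_{\bar\lambda}(t)[\Re Q(t)]X_\lambda(t)=G$. The key observation is that, because of this identity, the quadratic form $U[x_{\mu_0}(t,F)]$ collapses to an expression built from $G$ together with the bracketed operator $M(\mu_0)-\tfrac12\,\mathrm{sgn}(s-t)(iG)^{-1}$ sandwiched between the vectors $\int X^*_{\bar\mu_0}(s)W_{\mu_0}(s)F(s)\,ds$ taken over the one‑sided pieces $s<t$ and $s>t$. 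So I would introduce the shorthand vectors $g_-(t)=\int_a^t X^*_{\bar\mu_0}(s)W_{\mu_0}(s)F(s)\,ds$ and $g_+(t)=\int_t^b(\cdots)\,ds$, and rewrite everything in terms of $g_\pm$.

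Next I would pass to the limits $\alpha\downarrow a$ and $\beta\uparrow b$. As $\beta\uparrow b$ the vector $g_-(\beta)$ tends to the full integral $g:=\int_{\mathcal I}X^*_{\bar\mu_0}(s)W_{\mu_0}(s)F(s)\,ds$ while $g_+(\beta)\to 0$; symmetrically as $\alpha\downarrow a$ one has $g_-(\alpha)\to 0$ and $g_+(\alpha)\to g$. Substituting the representation \eqref{13}, $M(\mu_0)=(\mathcal P(\mu_0)-\tfrac12 I)(iG)^{-1}$, into the boundary expressions and simplifying with $U[x]=(\Re Q\,x,x)$ and $(iG)^{-1}$, the two limiting quantities in \eqref{12} should reduce, after the dust settles, to Hermitian forms in $g$ whose operators are respectively $\Im\mu_0\cdot\big(\text{something involving }\mathcal P(\mu_0)\big)$ and $\Im\mu_0\cdot\big(\text{something involving }I-\mathcal P(\mu_0)\big)$. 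Concretely I expect the upper‑end limit to be a multiple of $\big((\mathcal P(\mu_0)-\mathcal P^2(\mu_0))g,\ h\big)$‑type terms and likewise at the lower end, so that both one‑sided inequalities in \eqref{12} hold for every admissible $F$ exactly when $\mathcal P(\mu_0)=\mathcal P^2(\mu_0)$.

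The two directions then separate cleanly. For the ``if'' part, assuming \eqref{14}, I would verify that each of the two limiting forms becomes a sign‑definite expression (a genuine squared norm up to the factor $\Im\mu_0$), giving the two inequalities simultaneously; one also checks their \emph{sum} reproduces the already‑known global inequality \eqref{GrindEQ__47++_}, which serves as a consistency check. For the ``only if'' part, assuming separatedness at $\mu_0$, I would exploit the freedom in choosing $F$: since $F$ ranges over all compactly supported $L^2_{W_{\mu_0}}$ functions, the vector $g$ ranges over a dense enough set (the range of the total Gram operator $\Delta_{\mu_0}(a,b)$ acting after $X^*_{\bar\mu_0}$), so the sign conditions on the forms for all such $g$ force the operator $\mathcal P(\mu_0)-\mathcal P^2(\mu_0)$ to vanish on $N^\perp$, i.e.\ to be the idempotent claimed.

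The main obstacle I anticipate is the boundary‑limit bookkeeping in the first two paragraphs: one must justify that the limits $\alpha\downarrow a$, $\beta\uparrow b$ of the one‑sided integrals exist and behave as claimed (using the compact support of $F$ and the local integrability of $W_{\mu_0}$ noted after condition (\textbf{A})), and one must correctly track the $\mathrm{sgn}(s-t)$ jump, since it is precisely the interaction of this jump term with $M(\mu_0)$ that produces the $\mathcal P$ versus $I-\mathcal P$ split at the two ends. Getting the Hermitian symmetrization right — using $M(\lambda)=M^*(\bar\lambda)$ and the self‑adjointness structure of $\Re Q$ — so that the cross terms cancel and leave exactly $\mathcal P-\mathcal P^2$, is the delicate computational heart of the argument.
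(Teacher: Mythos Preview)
The paper does not actually prove this theorem: it is stated with a citation to \cite{Khrab4,Khrab5} (see also \cite{RBKholkin}) and no argument is given. So there is no in-paper proof to compare against; your sketch has to be judged on its own.

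There is a genuine gap in your computation. You claim that identity \eqref{GrindEQ__47+_}, $X^*_{\bar\lambda}(t)[\Re Q(t)]X_\lambda(t)=G$, makes the quadratic form $U[x_{\mu_0}(t,F)]$ ``collapse'' to a $t$-independent expression in $G$ and the bracketed kernel. But $U[x_{\mu_0}(t,F)]=(\Re Q(t)\,x_{\mu_0},x_{\mu_0})$ involves $X^{*}_{\mu_0}(t)\,\Re Q(t)\,X_{\mu_0}(t)$, not $X^{*}_{\bar\mu_0}(t)\,\Re Q(t)\,X_{\mu_0}(t)$, and these are different operators when $\Im\mu_0\neq 0$. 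In fact Green's formula for the homogeneous system gives
\[
X^{*}_{\lambda}(t)\,\Re Q(t)\,X_{\lambda}(t)=G-2\,\Im\lambda\,\Delta_\lambda(0,t),
\]
so for $\beta$ to the right of $\mathrm{supp}\,F$ one has $x_{\mu_0}(\beta)=X_{\mu_0}(\beta)\,\mathcal P(\mu_0)(iG)^{-1}g$ and
\[
\Im\mu_0\,U[x_{\mu_0}(\beta)]=\Im\mu_0\,(G h_+,h_+)-2(\Im\mu_0)^2(\Delta_{\mu_0}(0,\beta)h_+,h_+),\qquad h_+:=\mathcal P(\mu_0)(iG)^{-1}g,
\]
with an analogous formula at the left end in terms of $h_-:=(I-\mathcal P(\mu_0))(iG)^{-1}g$. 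Neither boundary term reduces to a pure ``$\mathcal P-\mathcal P^2$'' form as you anticipate; there is an unavoidable $t$-dependent $\Delta_\lambda$-piece, and on an infinite interval the existence and value of the limits in \eqref{12} depend precisely on how this piece interacts with the ranges of $\mathcal P(\mu_0)$ and $I-\mathcal P(\mu_0)$.

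What the cited proof in \cite{Khrab5} actually uses is this interaction: the c.o.\ hypothesis already constrains $\mathcal P(\mu_0)$ (via the operator inequality $\Im(PM(\lambda)P)/\Im\lambda\ge 0$ quoted before Theorem~\ref{th7}), and the separated inequalities \eqref{12} are then equivalent to the ranges of $\mathcal P(\mu_0)$ and $I-\mathcal P(\mu_0)$ being, respectively, maximal $G$-semidefinite subspaces of the appropriate signs --- exactly the geometric content of $\mathcal P^2=\mathcal P$ together with the relation $\mathcal P(\bar\mu_0)=I-G^{-1}\mathcal P^*(\mu_0)G$ that follows from $M(\bar\lambda)=M^*(\lambda)$. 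Traces of this argument appear in the present paper in the proof of Theorem~\ref{th+} (see \eqref{plus1}, \eqref{plus2} and the use of \cite[Theorem~2.4, formula~(1.69)]{Khrab5}). Your outline does not engage with this $G$-indefinite geometry at all, and the purely algebraic cancellation you are hoping for does not occur.
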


\begin{definition}\cite{Khrab4,Khrab5}\label{def++}
If the operator-function $M\left(\lambda \right)$ of the form
(\ref{13}) is the c.o. of equation (\ref{GrindEQ__46_}) on
$\mathcal{I} $ and, moreover, $\mathcal{P} \left(\lambda
\right)=\mathcal{P}^{2} \left(\lambda \right)$, then
$\mathcal{P}\left(\lambda \right)$ is called a characteristic
projection (c.p) of equation (\ref{GrindEQ__46_}) on $\mathcal{I}$
(or, simply, c.p).
\end{definition}

The properties of c.p.'s and sufficient conditions for their
existence are obtained in \cite{Khrab5}. Also \cite{Khrab5}
contains the description of c.p.'s and abstract an analogue of
Theorem \ref{th++}.

The following statement gives necessary and sufficient conditions
for existence of c.o., which corresponds to such separated
boundary conditions that corresponding boundary condition in
regular point is self-adjoint. This statement follows from Theorem
\ref{th++}.

Let us denote $\mathcal{H}_+$ ($\mathcal{H}_-$) the invariant
subspace of operator $G$, which corresponds to positive (negative)
part of $\sigma(G)$.

\begin{theorem}\label{th+}
Let $-\infty <a$. If $P=I$ then for existence of c.o. $M(\lambda)$
of equation \eqref{GrindEQ__46_} on $(a,b)$ such that
\begin{gather}\label{star5}
\exists\mu_0 \in \mathbb{C}\setminus \mathbb{R}^1:\
U[x_{\mu_0}(a,F)]=U[x_{\bar{\mu_0}}(a,F)]=0
\end{gather}
(and therefore condition \eqref{GrindEQ__47++_} is separated on
$\lambda=\mu_0$, $\lambda=\bar{\mu}_0$) it is necessary and
sufficient that
\begin{gather}\label{star6}
\mathrm{dim} \mathcal{H}_+=\mathrm{dim} \mathcal{H}_-
\end{gather}
(in \eqref{star5} $x_\lambda(t,F)$ is a solution
\eqref{GrindEQ__47_} of \eqref{GrindEQ__46_} which corresponds to
c.o. $M(\lambda)$, $L^2_{w_{\mu_0}(t)}(a,b)\ni F=F(t)$ is any
$\mathcal{H}_1$-valued vector-function with compact support). If
condition \eqref{star8} holds  then condition \eqref{star5} is
also sufficient for the existence of such c.o.
\end{theorem}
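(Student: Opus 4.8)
The plan is to translate the boundary requirement \eqref{star5} into a geometric condition on a subspace of $\mathcal{H}_1$ attached to the regular endpoint $a$, and then to reduce \eqref{star6} to the solvability of that geometric condition. First I would compute the boundary value $x_{\mu_0}(a,F)$ directly from \eqref{GrindEQ__47_}: since $F$ has compact support in $\mathcal{I}$ and $a$ lies to the left of that support, $\mathrm{sgn}(s-a)\equiv 1$ there, so, using the representation \eqref{13} of $M(\lambda)$, one gets $x_{\mu_0}(a,F)=X_{\mu_0}(a)(\mathcal{P}(\mu_0)-I)(iG)^{-1}c$ with $c=\int_{\mathcal{I}}X^*_{\bar\mu_0}(s)W_{\mu_0}(s)F(s)\,ds$. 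Using $W_{\bar\mu_0}=W_{\mu_0}$ (which follows from $H_{\bar\lambda}=H^*_\lambda$ in condition (\textbf{A})) together with $P=I$, the vectors $c$ are dense in $\mathcal{H}_1$, because $(v,c)=\int(W_{\mu_0}(s)X_{\bar\mu_0}(s)v,F(s))\,ds=0$ for all $F$ forces $W_{\mu_0}(s)X_{\bar\mu_0}(s)v\equiv 0$, i.e. $v\in\ker\Delta_{\bar\mu_0}(\alpha,\beta)$ for all $\alpha,\beta$, hence $v\in N=\{0\}$. Consequently the closure of the set of boundary values $x_{\mu_0}(a,F)$ is the subspace $\mathcal{L}_{\mu_0}:=X_{\mu_0}(a)\,\overline{\mathrm{Ran}}(\mathcal{P}(\mu_0)-I)$, and \eqref{star5} is equivalent to saying that $\mathcal{L}_{\mu_0}$ (and its analogue $\mathcal{L}_{\bar\mu_0}$) is neutral for the Hermitian form $(x,y)\mapsto([\Re Q(a)]x,y)$.

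Second, I record that \eqref{star5} already implies that the condition \eqref{GrindEQ__47++_} is separated at $\mu_0$ and at $\bar\mu_0$: indeed $U[x_{\mu_0}(a,F)]=0$ makes the left limit in Definition \ref{def+} vanish, and then the dissipativity inequality \eqref{GrindEQ__47++_} forces the right limit to have the correct sign. By Theorem \ref{th++} this means $\mathcal{P}(\mu_0)$ and $\mathcal{P}(\bar\mu_0)$ are projections, so $\overline{\mathrm{Ran}}(\mathcal{P}(\mu_0)-I)=\ker\mathcal{P}(\mu_0)$ and $\mathcal{L}_{\mu_0}=X_{\mu_0}(a)\ker\mathcal{P}(\mu_0)$. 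Next I would note that the signature of $\Re Q(t)$ is constant on $\bar{\mathcal{I}}$: since $\Re Q(t)$ is a norm-continuous family of boundedly invertible self-adjoint operators, its spectral projection onto the positive part of the spectrum is norm-continuous in $t$, hence of constant rank on the connected set $\bar{\mathcal{I}}$; therefore the positive and negative spectral subspaces of $\Re Q(a)$ have the same dimensions as $\mathcal{H}_+$ and $\mathcal{H}_-$.

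For the necessity of \eqref{star6} I would argue that the neutral subspace $\mathcal{L}_{\mu_0}$ is in fact hyper-maximal neutral (Lagrangian) for $\Re Q(a)$: the requirement that the boundary form vanish simultaneously at the conjugate points $\mu_0,\bar\mu_0$, together with $M(\lambda)=M^*(\bar\lambda)$, says exactly that the boundary condition selected at the regular endpoint $a$ is self-adjoint rather than merely dissipative, and the maximal dissipativity built into the c.o. upgrades ``neutral'' to ``maximal neutral''. A boundedly invertible self-adjoint form admits a Lagrangian subspace $\mathcal{L}=\mathcal{L}^{[\perp]}$ only when its positive and negative subspaces are isometrically identifiable, i.e. only when they have equal dimension; by the constancy of the signature this yields $\dim\mathcal{H}_+=\dim\mathcal{H}_-$.

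Conversely, if \eqref{star6} holds, then by the same constancy the positive and negative subspaces of $\Re Q(a)$ have equal dimension, so one may fix a unitary identification between them, build a Lagrangian subspace $\mathcal{L}\subset\mathcal{H}_1$ for $\Re Q(a)$, and pull it back through $X_{\mu_0}(a)$ to prescribe a self-adjoint separated boundary condition at $a$. It then remains to realize this condition by an actual c.o. $M(\lambda)$ of the form \eqref{13} with $\mathcal{P}(\lambda)$ a projection, which is precisely the content of the existence and description results for c.o.'s in \cite{Khrab5}; under \eqref{star8} those results apply and produce a c.o. satisfying \eqref{star5}, which also yields the final assertion. I expect the main obstacle to be exactly this last point in both directions: in the necessity part, rigorously promoting neutrality of $\mathcal{L}_{\mu_0}$ to hyper-maximal neutrality (tying the abstract dissipativity of the c.o. to Lagrangian geometry, with care about Hilbert-space dimensions), and in the sufficiency part, turning the prescribed Lagrangian condition at $a$ into a genuine c.o., which is where the descriptive machinery of \cite{Khrab5} and the nondegeneracy hypothesis \eqref{star8} are needed.
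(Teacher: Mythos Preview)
Your proposal is correct and follows the same overall route as the paper: compute the boundary values at $a$ via \eqref{GrindEQ__47_} and \eqref{13}, identify the closure of these values with $X_{\mu_0}(a)(I-\mathcal{P}(\mu_0))\mathcal{H}_1$, show this subspace is hypermaximal $\Re Q(a)$-neutral, deduce equal signature, and for sufficiency invoke the existence results of \cite{Khrab5}.

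Two points of comparison are worth noting. First, your transfer of signature from $\Re Q(a)$ to $G=\Re Q(0)$ via norm-continuity of the invertible self-adjoint family $t\mapsto\Re Q(t)$ is more elementary than the paper's argument: the paper instead uses the congruence \eqref{GrindEQ__47+_} and the inequality $\Im\lambda\bigl(X^*_\lambda(a)[\Re Q(a)]X_\lambda(a)-G\bigr)\le 0$ to pull the positive and negative spectral subspaces of $\Re Q(a)$ back through $X_{\mu_0}^{-1}(a)$, $X_{\bar\mu_0}^{-1}(a)$ to maximal uniformly $G$-definite subspaces, and then applies the law of inertia. Your route is shorter; the paper's has the advantage of reusing the same $X_\lambda$-machinery throughout. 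Second, for the step you correctly flag as the crux (upgrading neutrality to hypermaximal neutrality), the paper's mechanism is more concrete than your phrase ``maximal dissipativity upgrades neutral to maximal neutral'': citing Theorem~2.4 and formula~(1.69) of \cite{Khrab5}, it shows that $X_{\mu_0}(a)(I-\mathcal{P}(\mu_0))\mathcal{H}_1$ is maximal $\Re Q(a)$-\emph{nonnegative} (and its $\bar\mu_0$-counterpart maximal nonpositive), not merely neutral; combined with the neutrality coming from \eqref{star5} and the mutual $\Re Q(a)$-orthogonality of the two subspaces (a consequence of $M(\bar\lambda)=M^*(\lambda)$ via \eqref{GrindEQ__47+_}), this yields that each subspace equals the $\Re Q(a)$-orthogonal complement of the other, i.e.\ is hypermaximal neutral. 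This is exactly the ``self-adjoint boundary condition'' picture you describe, made precise.
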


\begin{proof}
Necessity. Since $P=I$ we obtain
\begin{gather}\label{plus1}
U[X_{\mu_0}(a)(I-\mathcal{P}(\mu_0))]=
U[X_{\bar{\mu}_0}(a)(I-\mathcal{P}(\bar{\mu}_0))]=0\end{gather} in
view of the proof of n$^\circ 2^\circ$ of Theorem 1.1 from
\cite{Khrab5}.

Let for definiteness $\Im\mu_0>0$. Then in view of Theorem 2.4 and
formula (1.69) from \cite{Khrab5}, \eqref{14}, \eqref{plus1} and
the fact that
\begin{gather}\label{plus2}
\Im\lambda(X^*_\lambda(a)\Re Q(a)X_\lambda(a)-G)\leq 0,\
\lambda\in\mathcal{A}
\end{gather}
we conclude that $X_{\mu_0}(a)(I-\mathcal{P}(\mu_0))\mathcal{H}_1$
and $X_{\bar{\mu}_0}(a)(I-\mathcal{P}(\bar{\mu}_0))\mathcal{H}_1$
are correspondingly maximal $\Re Q(a)$-nonnegative and maximal
$\Re Q(a)$-nonpositive subspaces which are $\Re Q(a)$-neutral and
which are $\Re Q(a)$-orthogonal in view of Remark 3.2 from
\cite{Khrab5}, Theorem \ref{th++} and \eqref{GrindEQ__47+_}. Hence
$$
\left(X_{\mu_0}(a)(I-\mathcal{P}(\mu_0))\mathcal{H}_1\right)^{[\perp]}=
X_{\bar{\mu}_0}(a)(I-\mathcal{P}(\bar{\mu}_0))\mathcal{H}_1
$$
in view of \cite[p.73]{Azizov} (here by $[\perp]$ we denote $\Re
Q(a)$-orthogonal complement). Therefore
$X_{\mu_0}(a)(I-\mathcal{P}(\mu_0))\mathcal{H}_1$ is hypermaximal
$\Re Q(a)$-neutral subspace in view of \cite[p.43]{Azizov}. Thus
we obtain that in view of \cite[p.42]{Azizov} that $\mathrm{dim }
\mathcal{H}_+(a)=\mathrm{dim} \mathcal{H}_-(a)$, where
$\mathcal{H}_\pm(a)$ are analogs of $\mathcal{H}_\pm$ for $\Re
Q(a)$. In view of \eqref{plus2}
$X_{\mu_0}^{-1}(a)\mathcal{H}_+(a)$ and
$X_{\bar\mu_0}^{-1}(a)\mathcal{H}_-(a)$ are correspondingly
maximal uniformly $G$-positive and maximal uniformly $G$-negative
subspaces. Therefore $\mathcal{H}_1$ is equal to the direct and
$G$-orthogonal sum of these subspaces in view of
\eqref{GrindEQ__47+_} and \cite[p.75]{Azizov}. Hence we obtain
\eqref{star6} in view of the law of inertia \cite[p.54]{Azizov}.

Sufficiency follows from Theorem 4.4. from \cite{Khrab5}. Theorem
is proved.
\end{proof}

It is obvious that in Theorem \ref{th+} the point $a$ can be
replaced by the point $b$ if $b<\infty$, but cannot be replaced by
the point $b$ if $b=\infty$ as the example of operator $id/dt$ on
the semi-axis shows. Also this example shows that condition
\eqref{star5} is not necessary for the fulfilment of the condition
$U[x_{\mu_0}(a,F)]=0$ only.

In the case of self-adjoint boundary conditions the analogue of
Theorem \ref{th+} for regular differential operators in space of
vector-functions was proved in \cite{RB} (see also
\cite{RBKholkin}). For finite canonical systems depending on
spectral parameter in a linear manner such analogue was proved in
\cite{Mogil}. These analogs were obtained in a different way
comparing with Theorem \ref{th+}.

Let us consider operator differential expression $l_{\lambda } $
of \eqref{GrindEQ__51+_} type with coefficients $p_{j} =p_{j}
\left(t,\lambda \right)$, $ q_{j} =q_{j} \left(t,\lambda
\right),\, \, s_{j} =s_{j} \left(t,\lambda \right)$ and of order
$r$. Let $-l_{\lambda } $ depends on $\lambda $ in Nevanlinna
manner. Namely, from now on the following condition holds:

(\textbf{B}) The set $\mathcal{B}\supseteq {\mathbb{C}\setminus
\mathbb{R}}^1 $ exists, any its points have a neighbourhood
independent on $t\in \bar{\mathcal{I}}$, in this neighbourhood
coefficients $p_{j} =p_{j} \left(t,\lambda \right),\, \, q_{j}
=q_{j} \left(t,\lambda \right),\, \, s=s_{j} \left(t,\lambda
\right)$ of the expression $l_{\lambda } $ are analytic $\forall
t\in \bar{\mathcal{I}}$; $\forall \lambda \in \mathcal{B}{\rm ,}\,
\, p_{j} \left(t,\lambda \right)$, $q_{j} \left(t,\lambda
\right)$, $s_{j} \left(t,\lambda \right)\in C^{j}
\left(\bar{\mathcal{I}},B\left(\mathcal{H}\right)\right)$ and
\begin{equation} \label{GrindEQ__48_}
p_{n}^{-1} \left(t,\lambda \right)\in
B\left(\mathcal{H}\right)\left(r=2n\right),\, \left(q_{n+1}
\left(t,\lambda \right)+s_{n+1} \left(t,\lambda
\right)\right)^{-1} \in B\left(\mathcal{H}\right)\,
\left(r=2n+1\right),\ t\in\bar{\mathcal{I}};
\end{equation}
these coefficients satisfy the following conditions
\begin{gather} \label{GrindEQ__49_}
p_{j} \left(t,\lambda \right)=p_{j}^{*} \left(t,\bar{\lambda
}\right),\, q_{j} \left(t,\lambda \right)=s_{j}^{*}
\left(t,\bar{\lambda }\right),\ \lambda \in\mathcal{B}
\end{gather}
$\big(\eqref{GrindEQ__49_}\ \Longleftrightarrow\ l_{\lambda }
=l_{\bar{\lambda }}^{*}\ \underset{\text{in view of
}\eqref{GrindEQ__17_}}\Longleftrightarrow\
H(t,l_\lambda)=H(t,l^*_{\bar\lambda}),\lambda \in
\mathcal{B}\big)$;
\begin{multline} \label{GrindEQ__50_}
\forall h_{0} ,\ldots ,h_{\left[\frac{r+1}{2} \right]} \in
\mathcal{H}:\\ \frac{\Im \left(\sum\limits
_{j=0}^{\left[r/2\right]}\left(p_{j} \left(t,\lambda \right)h_{j}
,h_{j} \right) +\frac{i}{2} \sum\limits
_{j=1}^{\left[\frac{r+1}{2} \right]}\left\{\left(s_{j}
\left(t,\lambda \right)h_{j} ,h_{j-1} \right)-\left(q_{j}
\left(t,\lambda \right)h_{j-1} ,h_{j} \right)\right\} \right)}{\Im
\lambda } \le 0,\\ t\in \bar{\mathcal{I}},\, \, \, \Im \lambda \ne
0.
\end{multline}

Therefore the order of expression $\Im l_{\lambda } $ is even and
therefore if $r=2n+1$ is odd, then $q_{m+1} ,\, s_{m+1} $ are
independent on $\lambda $ and $s_{n+1} =q_{n+1}^{*} $.

Condition \eqref{GrindEQ__50_} is equivalent to the condition:
${\left(\Im l_{\lambda } \right)\left\{f,f\right\}
\mathord{\left/{\vphantom{\left(\Im l_{\lambda }
\right)\left\{f,f\right\} \Im \lambda
}}\right.\kern-\nulldelimiterspace} \Im \lambda } \le 0,\, \, t\in
\bar{\mathcal{I}},\, \, \Im \lambda \ne 0$.

Hence $W\left(t,l_{\lambda } ,-\frac{\Im l_{\lambda } }{\Im
\lambda } \right)=\frac{\Im H\left(t,l_{\lambda } \right)}{\Im
\lambda } \ge 0,\, \, t\in \bar{\mathcal{I}},\, \, \, \Im \lambda
\ne 0$ due to Lemma \ref{lm1} and Theorem \ref{th2} and therefore
$H\left(t,l_{\lambda } \right)$ satisfy condition \textbf{(A)}
with $\mathcal{A}=\mathcal{B}$. Therefore $\forall\mu\in
\mathcal{B}\cap \mathbb{R}^1$ $W(t,l_\mu,-{\Im l_\mu\over\Im\mu
})=\left.{\partial
H(t,l_\lambda)\over\partial\lambda}\right|_{\lambda=\mu}$ is
Bochner locally integrable in uniform operator topology. Here in
view of \eqref{GrindEQ__7_}, \eqref{GrindEQ__15_} $\forall \mu \in
\mathcal{B}\bigcap \mathbb{R}^{1}\ \exists \frac{\Im l_{\mu }
}{\Im \mu } \mathop{=}\limits^{def} \frac{\Im l_{\mu +i0} }{\Im
\left(\mu +i0\right)} =\left.\frac{\partial l_{\lambda }
}{\partial \lambda }\right|_{\lambda=\mu} $, where the
coefficients ${\partial p_j(t,\mu)\over\partial\lambda}$,
${\partial q_j(t,\mu)\over\partial\lambda}$, ${\partial
s_j(t,\mu)\over\partial\lambda}$ of expression ${\partial l_{\mu }
\mathord{\left/{\vphantom{\partial l_{\mu }
\partial \mu }}\right.\kern-\nulldelimiterspace} \partial \mu } $
are Bochner locally integrable in the uniform operator topology.

Let us consider in $\mathcal{H}_1=\mathcal{H}^r$ the equation
\begin{equation} \label{GrindEQ__51_}
\frac{i}{2} \left(\left(Q\left(t,l_{\lambda }
\right)\vec{y}\left(t\right)\right)^{{'} } +Q^{*}
\left(t,l_{\lambda }
\right)\vec{y}\hspace{2px}'\left(t\right)\right)-H\left(t,l_{\lambda
} \right)\vec{y}\left(t\right)=W\left(t,l_{\lambda } -\frac{\Im
l_{\lambda } }{\Im \lambda } \right)F\left(t\right).
\end{equation}
This equation is an equation of \eqref{GrindEQ__46_} type due to
\eqref{GrindEQ__17_} and Lemma \ref{lm1}. Equation
\eqref{GrindEQ__5_} is equivalent to equation \eqref{GrindEQ__51_}
with $F\left(t\right)=F\left(t,l_{\bar{\lambda }} ,-\frac{\Im
l_{\lambda } }{\Im \lambda } \right)$ due to Theorem \ref{th1}.

\begin{definition} Every characteristic operator of equation
\eqref{GrindEQ__51_} corresponding to the equation
\eqref{GrindEQ__5_} is said to be a characteristic operator of
equation \eqref{GrindEQ__5_} on $\mathcal{I}$ (or simply c.o.).
\end{definition}

Let $m$ be the same as in $n^{\circ } 1$ differential expression
of even order $s\le r$ with operator coefficients $\tilde{p}_{j}
\left(t\right)=\tilde{p}_{j}^{*} \left(t\right),\, \,
\tilde{q}_{j} \left(t\right),\, \tilde{s}_{j}
\left(t\right)=\tilde{q}_{j}^{*} \left(t\right)$ that are
independent on $\lambda $. Let
\begin{multline}
\label{GrindEQ__52_} \forall h_{0} ,\, \ldots ,\,
h_{\left[\frac{r+1}{2} \right]} \in \mathcal{H}:\, \, 0\le
\sum\limits _{j=0}^{{s \mathord{\left/{\vphantom{s
2}}\right.\kern-\nulldelimiterspace} 2} }\left(\tilde{p}_{j}
\left(t\right)h_{j} ,h_{j} \right) +{\Im}\sum\limits _{j=1}^{{s
\mathord{\left/{\vphantom{s 2}}\right.\kern-\nulldelimiterspace}
2} }\left(\tilde{q}_{j}
\left(t\right)h_{j-1} ,\, h_{j} \right) \le\\
 \le -\frac{\Im \left(\sum\limits
_{j=0}^{\left[{r \mathord{\left/{\vphantom{r
2}}\right.\kern-\nulldelimiterspace} 2} \right]}\left(p_{j}
\left(t,\lambda \right)h_{j} ,h_{j} \right) +\frac{i}{2}
\sum\limits _{j=1}^{\left[\frac{r+1}{2} \right]}\left(\left(s_{j}
\left(t,\lambda \right)h_{j} ,h_{j-1} \right)-\left(q_{j}
\left(t,\lambda \right)h_{j-1} ,h_{j} \right)\right) \right)}{\Im
\lambda },\\ t\in \bar{\mathcal{I}},\, \, \, \Im \lambda \ne 0.
\end{multline}
Condition \eqref{GrindEQ__52_} is equivalent to the condition:
$0\le m\left\{f,f\right\}\le {\Im l_{\lambda } \left\{f,f\right\}
\mathord{\left/{\vphantom{\Im l_{\lambda } \left\{f,f\right\} \Im
\lambda }}\right.\kern-\nulldelimiterspace} \Im \lambda } $, $t\in
\bar{\mathcal{I}}$, $\Im \lambda \ne 0$. Hence
\begin{equation} \label{GrindEQ__53_}
0\le W\left(t,l_{\lambda } ,m\right)\le W\left(t,l_{\lambda }
,-\frac{\Im l_{\lambda } }{\Im \lambda } \right)=\frac{\Im
H\left(t,l_{\lambda } \right)}{\Im \lambda } \quad t\in
\bar{\mathcal{I}},\, \, \, \Im \lambda \ne 0
\end{equation}
due to Theorem \ref{th2} and Lemma \ref{lm1}.

In view of Theorem \ref{th1} equation \eqref{GrindEQ__1_} is
equivalent to the equation
\begin{equation} \label{GrindEQ__54_}
\frac{i}{2} \left(\left(Q\left(t,l_{\lambda }
\right)\vec{y}\left(t\right)\right)^{{'} } +Q^{*}
\left(t,l_{\lambda }
\right)\vec{y}\hspace{2px}'\left(t\right)\right)-H\left(t,l_{\lambda
} \right)\vec{y}\left(t\right)=W\left(t,l_{\bar{\lambda }}
,m\right)F\left(t,l_{\bar{\lambda }} ,m\right),
\end{equation}
where $Q\left(t,l_{\lambda } \right),\, H\left(t,l_{\lambda }
\right)$ are defined by \eqref{GrindEQ__6_}, \eqref{GrindEQ__7_},
\eqref{GrindEQ__14_}, \eqref{GrindEQ__15_} with $l_{\lambda } $
instead of $l$ and $W\left(t,l_{\bar{\lambda }} ,m\right)$
$F\left(t,l_{\bar{\lambda }} ,m\right)$ are defined by
\eqref{GrindEQ__8_}, \eqref{GrindEQ__16_} \eqref{GrindEQ__23_}
with $l_{\bar{\lambda }} $ instead of $l$ and
$\vec{y}\left(t\right)=\vec{y}\left(t,l_{\lambda } ,m,f\right)$ is
defined by \eqref{GrindEQ__25_} with $l_{\lambda } $ instead of
$l$.

In some cases we will suppose additionally that

\noindent $\exists \lambda _{0} \in \mathcal{B};\, \, \alpha
,\beta \in \bar{\mathcal{I}},\, \, 0\in \left[\alpha ,\beta
\right]$, the number $\delta >0$:
\begin{equation} \label{GrindEQ__55_}
-\int _{\alpha }^{\beta }\left(\frac{\Im l_{\lambda _{0} } }{\Im
\lambda _{0} } \right)\left\{y\left(t,\lambda _{0}
\right),y\left(t,\lambda _{0} \right)\right\} \, dt\ge \delta
\left\| P\vec{y}\left(0,l_{\lambda _{0} } ,m,0\right)\right\| ^{2}
\end{equation}
for any solution $y\left(t,\lambda _{0} \right)$ of
\eqref{GrindEQ__5_} as $\lambda =\lambda _{0} ,\, \, f=0$. In view
of Theorem \ref{th2} this condition is equivalent to the fact that
for the equation \eqref{GrindEQ__51_} with $F\left(t\right)=0$

$\exists \lambda _{0} \in \mathcal{A}=\mathcal{B};\, \, \alpha
,\beta \in \bar{\mathcal{I}},\, \, 0\in \left[\alpha ,\beta
\right]$, the number $\delta >0$:
\begin{equation} \label{GrindEQ__56_}
\left(\Delta _{\lambda _{0} } \left(\alpha ,\beta
\right)g,g\right)\ge \delta \left\| Pg\right\| ^{2} ,\quad g\in
\mathcal{H}^{r} .
\end{equation}

Therefore in view of \cite{Khrab6} the fulfillment of
\eqref{GrindEQ__55_} imply its fulfillment with $\delta
\left(\lambda \right)>0$ instead of $\delta $ for all $\lambda \in
\mathcal{B}$.

\begin{lemma}\label{lm6}
Let $M\left(\lambda \right)$ be a c.o. of equation
\eqref{GrindEQ__5_}, for which condition \eqref{GrindEQ__55_}
holds with $P=I_{r} $, if $\mathcal{I}$ is infinite. Let
$\Im\lambda\not= 0$, $\mathcal{H}^{r} $-valued $F\left(t\right)\in
L_{W\left(t,l_{\bar{\lambda }} ,m\right)}^{2}
\left(\mathcal{I}\right)$ (in particular one can set
$F\left(t\right)=F\left(t,l_{\bar{\lambda }} ,m\right)$, where
$f\left(t\right)\in C^{s}
\left(\mathcal{I},\mathcal{H}\right),m\left[f,f\right]<\infty $).
Then the solution
\begin{equation} \label{GrindEQ__57_}
x_{\lambda } \left(t,F\right)=\mathcal{R}_{\lambda } F=\int
_{\mathcal{I}}X_{\lambda } \left(t\right)\left\{M\left(\lambda
\right)-\frac{1}{2} sgn\left(s-t\right)\left(iG\right)^{-1}
\right\}X_{\bar{\lambda }}^{*}
\left(s\right)W\left(s,l_{\bar{\lambda }} ,m\right)F\left(s\right)
ds
\end{equation}
of equation \eqref{GrindEQ__54_} with $F\left(t\right)$ instead
$F\left(t,l_{\bar{\lambda }} ,m\right)$, satisfies the following
inequality
\begin{equation} \label{GrindEQ__58_}
\left\| \mathcal{R}_{\lambda } F\right\|_{L_{W\left(t,l_{\lambda }
,-\frac{\Im l_{\lambda } }{\Im \lambda } \right)}^{2}
\left(\mathcal{I}\right)}^{2} \le \Im \left(\mathcal{R}_{\lambda }
F,F\right)_{L_{W\left(t,l_{\bar{\lambda }},m \right)}^{2}
\left(\mathcal{I}\right)}/\Im\lambda,\ \Im\lambda\not= 0,
\end{equation}
where $X_{\lambda } \left(t\right)$ is the operator solution of
homogeneous equation \eqref{GrindEQ__54_} such that $X_{\lambda }
\left(0\right)=I_{r} $, $G=\mathcal{R}Q\left(0,l_{\lambda }
\right)$; integral \eqref{GrindEQ__57_} converges strongly if
$\mathcal{I}$ is infinite.
\end{lemma}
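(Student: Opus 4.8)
The plan is to reduce everything to the first-order canonical equation \eqref{GrindEQ__54_}, which is of type \eqref{GrindEQ__46_} by \eqref{GrindEQ__17_} and Lemma \ref{lm1}, and for which $x_\lambda(\cdot,F)=\mathcal R_\lambda F$ from \eqref{GrindEQ__57_} is the particular solution with right-hand side $g(t):=W(t,l_{\bar\lambda},m)F(t)$. I would first establish \eqref{GrindEQ__58_} for $F$ with compact support, so that \eqref{GrindEQ__57_} is a genuine finite integral and $x_\lambda$ is a classical solution; the general case then comes by density. Note that \eqref{GrindEQ__57_} carries exactly the Green kernel of \eqref{GrindEQ__47_}, and by \eqref{GrindEQ__53_} (applied at $\bar\lambda$) the source $g$ is dominated by $W_\lambda:=W(t,l_\lambda,-\Im l_\lambda/\Im\lambda)=\Im H(t,l_\lambda)/\Im\lambda$, so $x_\lambda(\cdot,F)$ is of the form \eqref{GrindEQ__47_} and the defining inequality \eqref{GrindEQ__47++_} of the characteristic operator is at our disposal.

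The engine is a Lagrange-type identity for the single solution $x_\lambda$. Taking the $\mathcal H_1$-inner product of \eqref{GrindEQ__54_} with $x_\lambda$ and extracting imaginary parts, the terms $\tfrac i2((Q\vec{y})',x_\lambda)+\tfrac i2(Q^*\vec{y}\hspace{2px}',x_\lambda)$ collapse (the real quantity $\Im(Qx_\lambda,x_\lambda')$ drops out) to $\tfrac12\tfrac{d}{dt}U[x_\lambda]$, while $\Im(H(t,l_\lambda)x_\lambda,x_\lambda)=(\Im\lambda)(W_\lambda x_\lambda,x_\lambda)$ by condition \textbf{(A)} and Lemma \ref{lm1}. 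This yields
\begin{equation*}
\tfrac12\tfrac{d}{dt}U[x_\lambda]=(\Im\lambda)(W_\lambda x_\lambda,x_\lambda)+\Im(g,x_\lambda).
\end{equation*}
Integrating over $(\alpha,\beta)$, multiplying by $\Im\lambda$ and letting $(\alpha,\beta)\uparrow\mathcal I$, the boundary term is $\le 0$ by \eqref{GrindEQ__47++_}. Since $m=m^*$ gives $W(t,l_{\bar\lambda},m)^*=W(t,l_{\bar\lambda},m)$ via \eqref{GrindEQ__17_}, one has $\int_{\mathcal I}\Im(g,x_\lambda)\,dt=-\Im(\mathcal R_\lambda F,F)_{L^2_{W(t,l_{\bar\lambda},m)}}$ and $\int_{\mathcal I}(W_\lambda x_\lambda,x_\lambda)\,dt=\|\mathcal R_\lambda F\|^2_{L^2_{W_\lambda}}$; rearranging and dividing by $(\Im\lambda)^2>0$ produces exactly \eqref{GrindEQ__58_}.

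Once \eqref{GrindEQ__58_} holds for compactly supported $F$, I would extract the operator bound. Using $W(t,l_{\bar\lambda},m)\le W_\lambda$ — which follows from \eqref{GrindEQ__53_} at $\bar\lambda$ together with $\Im H(t,l_{\bar\lambda})/\Im\bar\lambda=\Im H(t,l_\lambda)/\Im\lambda=W_\lambda$ (since $l_{\bar\lambda}=l_\lambda^*$) — Cauchy--Schwarz on the right-hand side of \eqref{GrindEQ__58_} gives $\|\mathcal R_\lambda F\|_{L^2_{W_\lambda}}\le|\Im\lambda|^{-1}\|F\|_{L^2_{W(t,l_{\bar\lambda},m)}}$. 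As compactly supported functions are dense in $L^2_{W(t,l_{\bar\lambda},m)}(\mathcal I)$, the operator $\mathcal R_\lambda$ extends continuously to all admissible $F$ and \eqref{GrindEQ__58_} persists by continuity.

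The main obstacle is the final assertion — strong convergence of \eqref{GrindEQ__57_} for infinite $\mathcal I$ — which is precisely why \eqref{GrindEQ__55_} with $P=I_r$ is imposed; by \cite{Khrab6} the equivalent coercivity \eqref{GrindEQ__56_} then propagates to all $\lambda\in\mathcal B$ with some $\delta(\lambda)>0$. Take truncations $F_k\to F$ in $L^2_{W(t,l_{\bar\lambda},m)}$; by the operator bound $\mathcal R_\lambda F_k\to\mathcal R_\lambda F$ in $L^2_{W_\lambda}$, so in particular $(\mathcal R_\lambda F_k)$ is $L^2_{W_\lambda}(\alpha,\beta)$-Cauchy on the fixed $[\alpha,\beta]$ of \eqref{GrindEQ__56_}. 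For any fixed $t$ and $k,j$ large enough that $F_k=F_j=F$ on $[\alpha,\max(\beta,t)]$, the difference $\mathcal R_\lambda F_k-\mathcal R_\lambda F_j$ solves the homogeneous equation there, hence equals $X_\lambda(\cdot)c_{kj}$; coercivity gives $\|c_{kj}\|\le\delta(\lambda)^{-1/2}\|\mathcal R_\lambda F_k-\mathcal R_\lambda F_j\|_{L^2_{W_\lambda}(\alpha,\beta)}\to0$, so $\mathcal R_\lambda F_k(t)=X_\lambda(t)c_{kj}+\mathcal R_\lambda F_j(t)$ is strongly Cauchy in $\mathcal H_1$ and its limit is the improper integral \eqref{GrindEQ__57_}. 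This passage from $L^2_{W_\lambda}$-convergence to pointwise strong convergence, powered by the definiteness \eqref{GrindEQ__56_}, is where the real work lies; the identity \eqref{GrindEQ__58_} itself is, by comparison, a routine Lagrange-identity computation.
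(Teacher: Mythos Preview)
Your Lagrange-identity computation and the density extension are fine, but the step where you invoke \eqref{GrindEQ__47++_} is a genuine gap. The defining inequality \eqref{GrindEQ__47++_} of a characteristic operator is stated only for solutions of the form \eqref{GrindEQ__47_}, where the source inside the kernel is $W_\lambda(s)F(s)$ with $F\in L^2_{W_\lambda}$ of compact support. In \eqref{GrindEQ__57_} the source is $W(s,l_{\bar\lambda},m)F(s)$, and the operator inequality $W(s,l_{\bar\lambda},m)\le W_\lambda(s)$ does \emph{not} let you write $W(s,l_{\bar\lambda},m)F(s)=W_\lambda(s)\tilde F(s)$ for some $\tilde F\in L^2_{W_\lambda}$: for nonnegative operators $A\le B$ one has $\operatorname{Ran}A^{1/2}\subseteq\operatorname{Ran}B^{1/2}$, not $\operatorname{Ran}A\subseteq\operatorname{Ran}B$, so in infinite dimensions your ``domination'' remark does not put $x_\lambda(\cdot,F)$ into the class covered by \eqref{GrindEQ__47++_}. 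The paper closes this gap by observing that the endpoint values of $x_\lambda$ depend only on the vector $h=\int X^*_{\bar\lambda}W(\cdot,l_{\bar\lambda},m)F$, proving in Lemma~\ref{lm7} that $h\in N^\perp$, and then invoking \cite[Theorem~1.1, n$^\circ$2]{Khrab5}, which says the boundary form has the correct sign for every $h\in N^\perp$. When $\mathcal I$ is infinite you have $P=I_r$, so $N^\perp=\mathcal H^r$ and a short density argument (the range of $\tilde F\mapsto\int X^*_{\bar\lambda}W_\lambda\tilde F$ is dense by \eqref{GrindEQ__56_}, and the boundary form is continuous in $h$) would repair your proof; but for finite $\mathcal I$ with $P\ne I_r$ you really need Lemma~\ref{lm7}.

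Your argument for strong convergence of \eqref{GrindEQ__57_} (truncate, note that $\mathcal R_\lambda F_k-\mathcal R_\lambda F_j$ solves the homogeneous equation on the fixed interval once $k,j$ are large, and control the constant $c_{kj}$ via the coercivity \eqref{GrindEQ__56_}) is correct but differs from the paper's. There the strong convergence is obtained first, \emph{before} any resolvent inequality, from a kernel estimate \eqref{GrindEQ__59_} imported from \cite[p.~166]{Khrab5}; this then gives locally uniform pointwise convergence of $\mathcal R_\lambda F_n$, which is what drives the passage from compactly supported $F$ to general $F$. Your route is more self-contained but needs the operator bound as input; the paper's ordering is the reverse.
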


\begin{proof} Let us denote
$$
K\left(t,s,\lambda \right)=X_{\lambda }
\left(t\right)\left\{M\left(\lambda \right)-\frac{1}{2}
sgn\left(s-t\right)\left(iG\right)^{-1} \right\}X_{\bar{\lambda
}}^{*} \left(s\right)$$

If \eqref{GrindEQ__55_} holds with $P=I_{r}$ if $\mathcal{I}$ is
infinite, then in view of \eqref{GrindEQ__53_} and
\cite[p.166]{Khrab5} there exists a locally bounded on $s$ and on
$\lambda $ constant $k\left(s,\lambda \right)$ such that
\begin{equation} \label{GrindEQ__59_}
\forall h\in \mathcal{H}^{r} :\quad \left\| K\left(t,s,\lambda
\right)h\right\| _{L_{W\left(t,{l}_{\bar\lambda } ,m\right)}^{2}
\left(\mathcal{I}\right)} \le k\left(s,\lambda \right)\left\|
h\right\|
\end{equation}

Hence integral \eqref{GrindEQ__57_} converges strongly if
$\mathcal{I}$ is in finite.

Let $F\left(t\right)$ have compact support and
$supp F\left(t\right)\subseteq \left[\alpha ,\beta \right]$.

Then in view of \eqref{GrindEQ__36_}
\begin{multline} \label{GrindEQ__60_}
\int _{\alpha }^{\beta }\left(W\left(t,l_{\lambda } ,-\frac{\Im
l_{\lambda } }{\Im \lambda } \right)\mathcal{R}_{\lambda }
F,\mathcal{R} _{\lambda } F\right) dt-\frac{\Im \int _{\alpha
}^{\beta }\left(W\left(t,l_{\bar{\lambda }}
,m\right)\mathcal{R}_{\lambda } F, F\right) dt}{\Im \lambda } =
\\
=\left.\frac{1}{2} \, \frac{\left(\Re Q\left(t,l_{\lambda }
\right)\mathcal{R} _{\lambda } F,\mathcal{R}_\lambda F\right)}{\Im
\lambda } \right|_\alpha^\beta \le 0
\end{multline}
where the last inequality is a corollary of $n^{\circ } 2$.
Theorem 1.1 from \cite[p.162]{Khrab5} and the following

\begin{lemma}\label{lm7}
Let $\mathcal{F}_{\lambda } $ is the set of $\mathcal{H}^{r}
$-valued function from $L_{W\left(t,l_{\bar{\lambda }}
,m\right)}^{2} \left(\alpha ,\beta \right)$,
\begin{gather}\label{GrindEQ__60+_} I_{\lambda }
\left(\alpha ,\beta \right)F=\int _{\alpha }^{\beta
}X_{\bar{\lambda }}^{*} \left(t\right)W\left(t,l_{\bar{\lambda }}
,m\right)F\left(t\right)dt ,\quad F\left(t\right)\in
\mathcal{F}_{\lambda }
\end{gather}
Then
\begin{equation} \label{GrindEQ__61-_}
I_{\lambda } \left(\alpha ,\beta \right)F\in \left\{Ker\int
_{\alpha }^{\beta }X_{\bar{\lambda }}^{*}
\left(t\right)W\left(t,l_{\bar{\lambda }} ,m\right)X_{\bar{\lambda
}} \left(t\right)dt \right\}^{\bot } \subseteq N^{\bot } .
\end{equation}
\end{lemma}

\begin{proof} Let $h\in Ker\int _{\alpha }^{\beta }X_{\bar{\lambda
}}^{*} \left(t\right)W\left(t,l_{\bar{\lambda }}
,m\right)X_{\bar{\lambda }} \left(t\right)dt $ $\Rightarrow
W\left(t,l_{\bar{\lambda }} ,m\right)X_{\bar{\lambda }}
\left(t\right)h=0$ $\Rightarrow I_{\lambda } \left(\alpha ,\beta
\right)F \bot h$. The second enclosure in \eqref{GrindEQ__61-_} is
a corollary of condition \eqref{GrindEQ__53_}. Lemma \ref{lm7} and
inequality \eqref{GrindEQ__60_} are proved.
\end{proof}

Thus Lemma \ref{lm6} is proved if $\mathcal{I}$ is finite. Let us
prove it for infinite $\mathcal{I}$. Let finite intervals
$\left(\alpha _{n} ,\beta _{n} \right)\uparrow \mathcal{I},\quad
F_{n} =\chi _{n} F$, where $\chi _{n} $ - is a characteristic
function of $\left(\alpha _{n} ,\beta _{n} \right)$. If
$\left(\alpha ,\beta \right)\subseteq \left(\alpha _{n} ,\beta
_{n} \right)$ then
$$\left\| \mathcal{R}_{\lambda } F_{n} \right\|
_{L_{W\left(t,l_{\lambda } ,-\frac{\Im l_{\lambda } }{\Im \lambda
} \right)}^{2} } \left(\alpha ,\beta \right)\le \frac{\left\|
F\right\| _{L_{W\left(t,l_{\bar{\lambda }} ,m\right)}^{2}
\left(\mathcal{I}\right)} }{\left|\Im \lambda \right|}$$
in view
of \eqref{GrindEQ__60_}, \eqref{GrindEQ__53_}. But local uniformly
on $t$: $\left(\mathcal{R}_{\lambda } F_{n}
\right)\left(t\right)\to \left(\mathcal{R}_{\lambda }
F\right)\left(t\right)$, in view of \eqref{GrindEQ__59_}. Hence
\begin{equation} \label{GrindEQ__61_}
\left\| \mathcal{R}_{\lambda } F\right\| _{L_{W\left(t,l_{\lambda
} ,-\frac{\Im l_{\lambda } }{\Im \lambda } \right)}^{2}
\left(\alpha ,\beta \right)} \le \frac{\left\| F\right\|
_{L_{W\left(t,l_{\bar{\lambda }} ,m\right)}^{2}
\left(\mathcal{I}\right)} }{\left|\Im \lambda \right|} .
\end{equation}
for any finite $\left(\alpha ,\beta \right)$. Hence
\eqref{GrindEQ__61_} holds with $\mathcal{I}$ instead of
$\left(\alpha ,\beta \right)$. In view of last fact
$\mathcal{R}_{\lambda } F_{n} \to \mathcal{R}_{\lambda } F$ in
$L_{W\left(t,l_{\lambda } ,-\frac{\Im l_{\lambda } }{\Im \lambda }
\right)}^{2} \left(\mathcal{I}\right)$. Hence \eqref{GrindEQ__58_}
is proved since it is proved for $F_{n} $. Lemma \ref{lm6} is
proved.
\end{proof}

Let us notice that in view of \cite{Khrab5} $PM\left(\lambda
\right)P$ is a c.o. of equation \eqref{GrindEQ__5_}, if
$M\left(\lambda \right)$ is its c.o. Obviously the closures of
operators $\mathcal{R}_{\lambda } $ corresponding to c.o.s
$M\left(\lambda \right)$ and $P M\left(\lambda \right) P$ are
equal in $B\left(L_{W\left(t,l_{\bar{\lambda }} ,m\right)}^{2}
\left(\mathcal{I}\right),\, L_{W\left(t,l_{\lambda },-\frac{\Im
l_{\lambda } }{\Im \lambda }\right)}
\left(\mathcal{I}\right)\right)$.

Let us notice what in view of \eqref{GrindEQ__52_} $l_{\lambda } $
can be a represented in form \eqref{GrindEQ__2_} where
\begin{equation} \label{GrindEQ__561_}
l=\Re l_{i} , n_{\lambda } =l_{\lambda } -l-\lambda m; {\Im
n_{\lambda } \left\{f,f\right\}
\mathord{\left/{\vphantom{n_{\lambda } \left\{f,f\right\}
\mathcal{I}\lambda \ge 0}}\right.\kern-\nulldelimiterspace}
\Im\lambda \ge 0} ,t\in \bar{\mathcal{I}},\, {\Im}\lambda \ne 0.
\end{equation}

From now on we suppose that $l_{\lambda } $ has a representation
in \eqref{GrindEQ__2_}, \eqref{GrindEQ__561_} and therefore the
order of $n_\lambda$ is even.

\section{Main results}

We consider pre-Hilbert spaces $\mathop{H}\limits^{\circ } $ and
$H$ of vector-functions $y\left(t\right)\in C_{0}^{s}
\left(\bar{\mathcal{I}},\mathcal{H}\right)$ and
$y\left(t\right)\in C^{s}
\left(\bar{\mathcal{I}},\mathcal{H}\right),\,
m\left[y\left(t\right),\, y\left(t\right)\right]<\infty $
correspondingly with a scalar product

\[\left(f\left(t\right),\, g\left(t\right)\right)_{m} =m\left[f\left(t\right),\, g\left(t\right)\right],\]
where $m\left[f,\, g\right]$ is defined by \eqref{GrindEQ__27_}
with expression $m$ from condition \eqref{GrindEQ__52_} instead of
$L$.

The null-elements of $H$ is given by

\begin{proposition}\label{prop1}
Let $f\left(t\right)\in H$. Then
\[m\left[f,f\right]=0\Leftrightarrow m\left[f\right]=f^{\left[s\right]} \left(t\right)=\, ...\, =f^{\left[{s \mathord{\left/{\vphantom{s 2}}\right.\kern-\nulldelimiterspace} 2} \right]} \left(t\right)=0,\, \, \, t\in \bar{\mathcal{I}}.\]
\end{proposition}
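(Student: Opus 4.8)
The plan is to turn the integral identity $m[f,f]=0$ into a pointwise statement and then read the quasi-derivatives directly off the nonnegative quadratic form $m\{f,f\}$. First I would note that, by condition \eqref{GrindEQ__52_} (equivalently $0\le m\{f,f\}$), the integrand $m\{f,f\}(t)$ is nonnegative, and since $f\in H$ is of class $C^{s}$ it is continuous on $\bar{\mathcal{I}}$. As $m[f,f]=\int_{\mathcal I}m\{f,f\}\,dt<\infty$ by definition of $H$, vanishing of this integral forces $m\{f,f\}(t)\equiv 0$ on $\mathcal I$, hence on $\bar{\mathcal I}$ by continuity. Thus the whole assertion reduces to the pointwise claim that $m\{f,f\}(t)=0$ if and only if $f^{[s/2]}(t|m)=\dots=f^{[s]}(t|m)=0$ (recall that $m[f]=f^{[s]}(t|m)$).

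Next I would write $m\{f,f\}$ from \eqref{GrindEQ__28_} with $R=s$ and $\tilde s_j=\tilde q_j^{*}$ as a Hermitian quadratic form $m\{f,f\}=(M(t)\vec g,\vec g)$ in the vector $\vec g=\mathrm{col}(f,f',\dots,f^{(s/2)})$, where $M(t)=M^{*}(t)$ is the tridiagonal operator matrix with diagonal entries $\tilde p_j$, subdiagonal entries $-\tfrac{i}{2}\tilde q_j$ and superdiagonal entries $\tfrac{i}{2}\tilde s_j$; this is precisely the block of $\{\|m_{\alpha\beta}\|\}$ in \eqref{GrindEQ__8_} that survives the substitution $C^{-1}F$, cf.\ Theorem \ref{th2}. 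Condition \eqref{GrindEQ__52_} asserts $M(t)\ge 0$, so for each fixed $t$ one has $(M\vec g,\vec g)=0\Leftrightarrow M\vec g=0$. This is the decisive use of nonnegativity: it lets me pass from the scalar form to the vanishing of the full vector $M\vec g$.

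The crux is then to identify the components of $M\vec g$ with quasi-derivatives. Comparing $M\vec g$ with the recursion \eqref{GrindEQ__11_}, \eqref{GrindEQ__12_} written for $m$, I would check that $(M\vec g)_{s/2}=f^{[s/2]}(t|m)$ and that $(M\vec g)_j=f^{[s-j]}(t|m)+\bigl(f^{[s-j-1]}(t|m)\bigr)'$ for $j=0,\dots,s/2-1$, because the ``coefficient part'' $\tilde p_j f^{(j)}+\tfrac{i}{2}\bigl(\tilde s_{j+1}f^{(j+1)}-\tilde q_j f^{(j-1)}\bigr)$ of the quasi-derivative $f^{[s-j]}$ in \eqref{GrindEQ__12_} is exactly the $j$-th component of $M\vec g$. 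Hence $M\vec g=0$ is equivalent to the system $f^{[s/2]}=0$ together with $f^{[s-j]}+(f^{[s-j-1]})'=0$ for $j=0,\dots,s/2-1$.

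Finally I would close both implications from this system. If all of $f^{[s/2]},\dots,f^{[s]}$ vanish identically, the system holds trivially and $m\{f,f\}=0$. Conversely, starting from $f^{[s/2]}=0$, the relations $f^{[s-j]}=-(f^{[s-j-1]})'$ propagate upward: taking $j=s/2-1,\,s/2-2,\dots,0$ in turn yields $f^{[s/2+1]}=0,\dots,f^{[s]}=0$ by induction. I expect the only genuinely delicate part to be the bookkeeping of the previous paragraph: verifying that the off-diagonal entries of $M$ are indeed $-\tfrac{i}{2}\tilde q_j$ and $\tfrac{i}{2}\tilde s_j$ (so that $M=M^{*}$, using $\tilde s_j=\tilde q_j^{*}$) and that the index $j=s-m-1$ stays in $\{0,\dots,s/2-1\}$ throughout the induction; everything else is routine.
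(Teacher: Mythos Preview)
Your argument is correct and is essentially the paper's own proof, spelled out in full: the paper also introduces the tridiagonal matrix $m(t)\ge 0$ associated to the left-hand quadratic form in \eqref{GrindEQ__52_}, uses $m(t)\ge 0$ to pass from $(m(t)\vec g,\vec g)=0$ to $m(t)\vec g=0$, and then identifies the components of $m(t)\vec g$ with the quasi-derivatives $f^{[s/2]},\dots,f^{[s]}$. Your additional explicit verification that $(M\vec g)_j=f^{[s-j]}+(f^{[s-j-1]})'$ via \eqref{GrindEQ__11_}--\eqref{GrindEQ__12_} and the upward induction are exactly the bookkeeping the paper's one-line chain of equivalences suppresses.
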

\begin{proof}
Let us denote by $m\left(t\right)\in B\left(\mathcal{H}^{n+1}
\right)$ the operator matrix corresponding to the quadratic form
in left side of \eqref{GrindEQ__52_}. Since $m\left(t\right)\ge 0$
one has
$$
m\left[f,f\right]=0\Leftrightarrow
m\left(t\right)col\left\{f\left(t\right),\ldots
,f^{\left(s/2\right)} ,0,\ldots ,0\right\}=0\Leftrightarrow
f^{\left[s\right]} \left(t\right)=\ldots =f^{\left[s/2\right]} =0
$$
\end{proof}

\begin{example}\label{ex1} Let $\dim \mathcal{H}=1,\, \, s=2,\, \tilde{p}_{1}
\left(t\right)>0,\, \left|\tilde{q}_{1}
\left(t\right)\right|^2=4{\tilde{p}_{1}
\left(t\right)\tilde{p}_{0} \left(t\right)} $. Then for expression
$m$ the first inequality \eqref{GrindEQ__52_} holds and
$m\left\{f_{0} ,f_{0} \right\}\equiv 0$ for $f_{0}
\left(t\right)=\exp \left({i\over 2}\int_0^t \tilde q_1/\tilde p_1
dt\right)\ne 0$ in view of Proposition \ref{prop1}.
\end{example}

By $\mathop{L_{m}^{2} }\limits^{\circ } \left(\mathcal{I}\right)$
and $L_{m}^{2} \left(\mathcal{I}\right)$ we denote the completions
of spaces $\mathop{H}\limits^{\circ } $ and $H$ in the norm
$\left\| \, \bullet \, \right\| _{m} =\sqrt{\left(\, \bullet ,\,
\bullet \right)_{m} } $ correspondingly. By
$\mathop{P}\limits^{\circ } $ we denote the orthoprojection in
$\mathop{L_{m}^{2} }\limits^{} \left(\mathcal{I}\right)$ onto
$\mathop{L_{m}^{2} }\limits^{\circ } \left(\mathcal{I}\right)$.

\begin{theorem}\label{th4}
Let $M\left(\lambda \right)$ is a c.o. of equation
\eqref{GrindEQ__5_}, for which the condition \eqref{GrindEQ__55_}
with $P=I_{r} $ holds if $\mathcal{I}$ is infinite. Let
$\Im\lambda\not= 0$, $f\left(t\right)\in H$ and
\begin{multline} \label{GrindEQ__62_}
col\left\{y_{j} \left(t,\lambda ,f\right)\right\}=\\=\int
_{\mathcal{I}}X_{\lambda } \left(t\right) \left\{M\left(\lambda
\right)-\frac{1}{2} sgn\left(s-t\right)\left(iG\right)^{-1}
\right\}X_{\bar{\lambda }}^{*}
\left(s\right)W\left(s,l_{\bar{\lambda }}
,m\right)F\left(s,l_{\bar{\lambda }} ,m\right)\, ds,\, y_{j} \in
\mathcal{H}
\end{multline}
be a solution of equation \eqref{GrindEQ__54_}, that corresponds
to equation \eqref{GrindEQ__1_}, where $X_{\lambda }
\left(t\right)$ is the operator solution of homogeneons equation
\eqref{GrindEQ__54_} such that $X_{\lambda } \left(0\right)=I_{r}
;\, \, G=\Re Q\left(0,l_{\lambda } \right)$ (if $\mathcal{I}$ is
infinite integral \eqref{GrindEQ__62_} converges strongly). Then
the first component of vector function \eqref{GrindEQ__62_} is a
solution of equation \eqref{GrindEQ__1_}. It defines densely
defined in $L_{m}^{2} \left(\mathcal{I}\right)$
integro-differential operator
\begin{equation} \label{GrindEQ__63_}
R\left(\lambda \right)f=y_{1} \left(t,\lambda ,f\right),\quad f\in H
\end{equation}
which has the following properties after closing

\noindent 1${}^\circ$
\begin{gather}\label{GrindEQ__64_}
R^{*} \left(\lambda \right)=R\left(\bar{\lambda }\right),\, \, \,
{\Im}\lambda \ne 0
\end{gather}

\noindent 2${}^\circ$
\begin{gather}\label{GrindEQ__65_}
R\left(\lambda \right)\text{ is holomorphic on
}\mathbb{C}\setminus \mathbb{R}^1
\end{gather}

\noindent 3${}^\circ$
\begin{gather}\label{GrindEQ__66_}
\left\| R\left(\lambda \right)f\right\| _{L_{m}^{2}
\left(\mathcal{I}\right)}^{2} \le \frac{\Im \left(R\left(\lambda
\right)f,f\right)_{L_{m}^{2} \left(\mathcal{I}\right)} }{\Im
\lambda } ,\, \, \, {\Im}\lambda \ne 0,\, \, \, f\in L_{m}^{2}
\left(\mathcal{I}\right)
\end{gather}
\end{theorem}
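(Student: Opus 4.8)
The plan is to transport the resolvent estimate of Lemma~\ref{lm6} from the weighted space $L^2_{W(t,l_{\bar\lambda},m)}(\mathcal{I})$ to $L^2_m(\mathcal{I})$ through the dictionary supplied by Theorems~\ref{th1} and~\ref{th2}. First I would record that the vector-function \eqref{GrindEQ__62_} is exactly $\mathcal{R}_\lambda F(\cdot,l_{\bar\lambda},m)=\vec y(t,l_\lambda,m,f)$: by Theorem~\ref{th1} it solves \eqref{GrindEQ__54_}, hence its first component $y_1=R(\lambda)f$ solves \eqref{GrindEQ__1_}. Next, \eqref{GrindEQ__29_} gives the isometry $\|F(\cdot,l_{\bar\lambda},m)\|^2_{L^2_{W(t,l_{\bar\lambda},m)}}=m[f,f]=\|f\|_m^2$, so $f\mapsto F(\cdot,l_{\bar\lambda},m)$ embeds $H$ isometrically; since $H$ is dense in $L^2_m(\mathcal{I})$ by construction, $R(\lambda)$ is densely defined.

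For property $3^\circ$, the core computation, I would insert $l=l_\lambda$ (whose $\Im l_\lambda$ has even order, so Theorem~\ref{th2}.2a applies), $l^*=l_{\bar\lambda}$, $m^*=m$ and $\vec y=\mathcal{R}_\lambda F$ into \eqref{GrindEQ__30_}, divide by $\Im\lambda$ and integrate over $\mathcal{I}$. By linearity of $W$ in its weight slot the left-hand side becomes $\|\mathcal{R}_\lambda F\|^2_{L^2_{W(t,l_\lambda,-\Im l_\lambda/\Im\lambda)}}-\Im(\mathcal{R}_\lambda F,F)_{L^2_{W(t,l_{\bar\lambda},m)}}/\Im\lambda$, which is $\le0$ by Lemma~\ref{lm6}. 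Hence its right-hand side $-\int_{\mathcal{I}}\frac{(\Im l_\lambda)\{y_1,y_1\}}{\Im\lambda}\,dt-\frac{\Im\,m[y_1,f]}{\Im\lambda}$ is $\le0$ as well; since $-\int_{\mathcal{I}}\frac{(\Im l_\lambda)\{y_1,y_1\}}{\Im\lambda}\,dt\ge\int_{\mathcal{I}}m\{y_1,y_1\}\,dt=\|R(\lambda)f\|_m^2$ by \eqref{GrindEQ__53_} and $\Im\,m[y_1,f]=\Im(R(\lambda)f,f)_m$, this rearranges to exactly \eqref{GrindEQ__66_} (for infinite $\mathcal{I}$ one argues on finite intervals and passes to the limit using the strong convergence in Lemma~\ref{lm6}). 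In particular $\|R(\lambda)f\|_m\le\|f\|_m/|\Im\lambda|$, so $R(\lambda)$ is bounded, closable, and \eqref{GrindEQ__66_} survives the closure for every $f\in L^2_m(\mathcal{I})$.

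For property $1^\circ$ I would apply the Green formula \eqref{GrindEQ__36_} with $\mathrm{l}_1=l_\lambda$, $\mathrm{l}_2=l_{\bar\lambda}$, $\mathrm{m}_1=\mathrm{m}_2=m$, $f_1=f$, $f_2=g$, $y_1=R(\lambda)f$, $y_2=R(\bar\lambda)g$. Because \eqref{GrindEQ__49_} gives $\mathrm{l}_2^*=l_\lambda=\mathrm{l}_1$ and $\mathrm{m}_2^*=m$, the volume term $(\mathrm{l}_1-\mathrm{l}_2^*)\{y_1,y_2\}$ drops out and \eqref{GrindEQ__36_} collapses to $(f,R(\bar\lambda)g)_m-(R(\lambda)f,g)_m$ equal to the boundary form $\left.\big(\tfrac{i}{2}(Q(t,l_\lambda)+Q^*(t,l_{\bar\lambda}))\vec y_1,\vec y_2\big)\right|_\alpha^\beta$ built from $\mathcal{R}_\lambda F$ and $\mathcal{R}_{\bar\lambda}G$. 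The hard part will be showing this boundary form tends to $0$ as $(\alpha,\beta)\uparrow\mathcal{I}$: here the defining condition \eqref{GrindEQ__47++_} of the characteristic operator and the self-adjointness $M(\lambda)=M^*(\bar\lambda)$ are essential, and I would invoke the corresponding boundary-behaviour (adjointness $\mathcal{R}_\lambda^*=\mathcal{R}_{\bar\lambda}$) of the c.o.-resolvent from \cite{Khrab5}; for infinite $\mathcal{I}$ this also uses the strong convergence of \eqref{GrindEQ__62_} together with condition \eqref{GrindEQ__55_}. Once the boundary form vanishes, $(R(\lambda)f,g)_m=(f,R(\bar\lambda)g)_m$ for $f,g\in H$, giving \eqref{GrindEQ__64_} after closure.

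Finally, for the holomorphy $2^\circ$ I would fix $f\in H$ and note that in \eqref{GrindEQ__62_} the kernel is analytic in $\lambda$ (conditions \textbf{(A)}, \textbf{(B)}, analyticity of $M(\lambda)$ and of $X_\lambda(t)$), so differentiation under the integral sign — immediate on finite $\mathcal{I}$, and for infinite $\mathcal{I}$ justified by the locally uniform bounds \eqref{GrindEQ__59_} — shows $\lambda\mapsto R(\lambda)f$ is $L^2_m$-holomorphic on $\mathbb{C}\setminus\mathbb{R}^1$. Strong holomorphy on the dense set $H$ together with the uniform local bound $\|R(\lambda)\|\le1/|\Im\lambda|$ from $3^\circ$ then upgrades to norm-holomorphy of the closed family, which is \eqref{GrindEQ__65_}.
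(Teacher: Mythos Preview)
Your argument for $3^\circ$ and $2^\circ$ is essentially the paper's: the paper writes the chain \eqref{GrindEQ__67_} (which is your application of \eqref{GrindEQ__30_} plus \eqref{GrindEQ__52_}) and then invokes Lemma~\ref{lm6}; for holomorphy it likewise uses analyticity of $M(\lambda)$ and of $W(t,l_{\bar\lambda},m)F(t,l_{\bar\lambda},m)$ together with the locally uniform convergence furnished by $3^\circ$, citing \cite{Kato} for the upgrade to norm-holomorphy. One small correction: the pointwise inequality $m\{y,y\}\le -(\Im l_\lambda/\Im\lambda)\{y,y\}$ you use is \eqref{GrindEQ__52_}, not \eqref{GrindEQ__53_} (the latter is its lift to the matrix weights $W$).

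For $1^\circ$ your route differs from the paper's and is unnecessarily delicate. You go through the Green formula \eqref{GrindEQ__36_}, which produces the boundary bilinear form $\big(i\,\Re Q(t,l_\lambda)\,\vec y_1,\vec y_2\big)\big|_\alpha^\beta$ whose vanishing you acknowledge as ``the hard part'' and defer to \cite{Khrab5}. The paper avoids this entirely: instead of \eqref{GrindEQ__36_} it uses the \emph{pointwise} identity \eqref{GrindEQ__31_} of Theorem~\ref{th2}, which (with $l=l_\lambda$, $l^*=l_{\bar\lambda}$, $m^*=m$) converts $m\{R(\lambda)f,g\}-m\{f,R(\bar\lambda)g\}$ directly into the difference $(W(t,l_\lambda,m)\mathcal R_\lambda F,G)-(W(t,l_{\bar\lambda},m)F,\mathcal R_{\bar\lambda}G)$ with \emph{no boundary terms}. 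Integrating over $\mathcal I$ and using the elementary identity \eqref{GrindEQ__72_}, namely $(\mathcal R_\lambda F,G)_{L^2_{W(t,l_\lambda,m)}}=(F,\mathcal R_{\bar\lambda}G)_{L^2_{W(t,l_{\bar\lambda},m)}}$, which follows immediately from $M(\lambda)=M^*(\bar\lambda)$ and the explicit kernel in \eqref{GrindEQ__57_} (first for compactly supported $F,G$, then in general via \eqref{GrindEQ__70_}--\eqref{GrindEQ__71_}), one gets $m[R(\lambda)f,g]=m[f,R(\bar\lambda)g]$ at once. So the key ingredient you overlooked is \eqref{GrindEQ__31_}: it is precisely the algebraic bridge that makes the boundary form disappear before it ever arises.
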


Let us notice that the definition of the operator $R\left(\lambda
\right)$ is correct. Indeed if $f\left(t\right)\in H$,
$m\left[f,f\right]=0$, then $R\left(\lambda \right)f\equiv 0$
since $W\left(t,l_{\bar{\lambda }}
,m\right)F\left(t,l_{\bar{\lambda }},m \right)\equiv 0$ due to
\eqref{GrindEQ__29_}, \eqref{GrindEQ__53_}.

\begin{proof}
In view of Lemma \ref{lm6} integral \eqref{GrindEQ__62_} converges
strongly if $\mathcal{I}$ is infinite. In view of Theorem
\ref{th1} $y_{1} \left(t,\lambda ,f\right)$ \eqref{GrindEQ__63_}
is a solution of equation \eqref{GrindEQ__1_}.

In view of \eqref{GrindEQ__52_}, \eqref{GrindEQ__30_}
\begin{multline} \label{GrindEQ__67_}
\left\| R\left(\lambda \right)f\right\| _{L_{m}^{2} \left(\alpha
,\beta \right)} -\frac{\Im \left(R\left(\lambda
\right)f,f\right)_{L_{m}^{2} \left(\alpha ,\beta \right)} }{\Im
\lambda } \le \left\| R\left(\lambda \right)f\right\|
_{L_{-\frac{\Im l_{\lambda } }{\Im \lambda }}^{2}\left(\alpha
,\beta \right) } -\frac{\Im \left(R\left(\lambda
\right)f,f\right)_{L_{m}^{2} \left(\alpha ,\beta \right)} }{\Im
\lambda } =\\
=\left\| \mathcal{R}_{\lambda } F\left(t,l_{\bar{\lambda }}
,m\right)\right\| _{L_{W\left(t,l_{\lambda } ,-\frac{\Im
l_{\lambda } }{\Im \lambda } \right)}^{2} \left(\alpha ,\beta
\right)} -\frac{\Im \left(\mathcal{R}_{\lambda }
F\left(t,l_{\bar{\lambda }} ,m\right),F(t,,l_{\bar{\lambda }}
,m\right)_{L_{W\left(t,l_{\bar{\lambda }} ,m\right)}^{2}
\left(\alpha ,\beta \right)} }{\Im \lambda }.
\end{multline}
In view of Lemma \ref{lm6} a nonnegative limit of the
right-hand-side of \eqref{GrindEQ__67_} exists, when $\left(\alpha
,\beta \right)\uparrow \mathcal{I}$. Hence \eqref{GrindEQ__66_} is
proved.

Let $\mathcal{H}^{r}$-valued $F(t)\in
L^2_{W(t,l_{\bar\lambda},m)}(\mathcal{I})$. Then in view of
\eqref{GrindEQ__53_}, Lemma \ref{lm6}, \eqref{GrindEQ__18_} one
has
\begin{gather} \label{GrindEQ__68_}
\left\| \mathcal{R}_{\lambda } F\right\| _{L_{W\left(t,l_{\lambda
} ,m\right)}^{2} \left(\mathcal{I}\right)}^{2} \le \left\|
\mathcal{R}_{\lambda } F\right\| _{_{L_{W\left(t,l_{\lambda }
,-\frac{\Im l}{\Im \lambda } \right)}^{2}
\left(\mathcal{I}\right)} }^{2} \le \frac{\Im
\left(\mathcal{R}_{\lambda }
F,F\right)_{L_{W\left(t,l_{\bar{\lambda }} ,m\right)}^{2}
\left(\mathcal{I}\right)} }{\Im \lambda } , \\
\label{GrindEQ__69_} \left\| \mathcal{R}_{\lambda}F \right\|
_{L_{W\left(t,l_{\bar{\lambda }} ,m\right)}^{2}
\left(\mathcal{I}\right)}^{2} \le \left\| \mathcal{R}_{\lambda}F
\right\| _{L_{W\left(t,l_{\bar{\lambda }} ,-\frac{\Im l_{\lambda }
}{\Im \lambda } \right)}^{2} \left(\mathcal{I}\right)} =\left\|
\mathcal{R}_{\lambda}F \right\| _{L_{W\left(t,l_{\lambda }
,-\frac{\Im l_{\lambda } }{\Im \lambda } \right)}^{2}
\left(\mathcal{I}\right)} .
\end{gather}

In view of \eqref{GrindEQ__68_}, \eqref{GrindEQ__69_} we have
\begin{gather} \label{GrindEQ__70_}
\left\| \mathcal{R}_{\lambda } F\right\| _{L_{W\left(t,l_{\lambda
} ,m\right)}^{2} \left(\mathcal{I}\right)} \le {\left\| F\right\|
_{L_{W\left(t,l_{\bar{\lambda }} ,m\right)}^{2}
\left(\mathcal{I}\right)}  \mathord{\left/{\vphantom{\left\|
F\right\| _{L_{W\left(t,l_{\bar{\lambda }} ,m\right)}^{2}
\left(\mathcal{I}\right)}  \left|\Im \lambda
\right|}}\right.\kern-\nulldelimiterspace} \left|\Im \lambda
\right|},\\ \label{GrindEQ__71_} \left\| \mathcal{R}_{\lambda }
F\right\| _{L_{W\left(t,l_{\bar{\lambda }} ,m\right)}^{2}
\left(\mathcal{I}\right)} \le {\left\| F\right\|
_{L_{W\left(t,l_{\bar{\lambda }} ,m\right)}^{2}
\left(\mathcal{I}\right)}  \mathord{\left/{\vphantom{\left\|
F\right\| _{L_{W\left(t,l_{\bar{\lambda }} ,m\right)}^{2}
\left(\mathcal{I}\right)}  \left|\Im \lambda
\right|}}\right.\kern-\nulldelimiterspace} \left|\Im \lambda
\right|} .
\end{gather}

Let $F\left(t\right)\in L_{W\left(t,l_{\bar{\lambda }}
,m\right)}^{2} \left(\mathcal{I}\right)$, $G\left(t\right)\in
L_{W\left(t,l_{\lambda } ,m\right)}^{2} \left(\mathcal{I}\right)$
are $\mathcal{H}^{r} $-valued functions with compact support. We
have
\begin{equation} \label{GrindEQ__72_}
\left(\mathcal{R}_{\lambda } F,G\right)_{L_{W\left(t,l_{\lambda }
,m\right)}^{2} \left(\mathcal{I}\right)}
=\left(F,\mathcal{R}_{\bar{\lambda }}
,G\right)_{L_{W\left(t,l_{\bar{\lambda }} ,m\right)}^{2}
\left(\mathcal{I}\right)} .
\end{equation}
since $M\left(\lambda \right)=M^{*} \left(\bar{\lambda }\right)$.
Due to inequalities \eqref{GrindEQ__70_}, \eqref{GrindEQ__71_}
equality \eqref{GrindEQ__69_} is valid for $F\left(t\right),\,
G\left(t\right)$ with non-compact support.

Now it follows from, \eqref{GrindEQ__31_}, \eqref{GrindEQ__72_}
that $\forall f\left(t\right),g\left(t\right)\in H$
\begin{multline*}
m\left[R\left(\lambda
\right)f,g\right]-m\left[f,R\left(\bar{\lambda
}\right)g\right]=\left(\mathcal{R}_{\lambda }
F\left(t,l_{\bar{\lambda }} ,m\right),G\left(t,l_{\lambda }
,m\right)\right)_{L_{W\left(t,l_{\lambda } ,m\right)}^{2}
\left(\mathcal{I}\right)} -\\
-\left(F\left(t,l_{\bar{\lambda }}
,m\right),\mathcal{R}_{\bar{\lambda }} G\left(t,l_{\lambda }
,m\right)\right)_{L_{W\left(t,l_{\bar{\lambda }} ,m\right)}^{2}
\left(\mathcal{I}\right)} =0
\end{multline*}
Thus the closure of
the operator $R\left(\lambda \right)f$ in $L_{m}^{2}
\left(\mathcal{I}\right)$ possesses property \eqref{GrindEQ__64_}.

Since in view of \eqref{GrindEQ__66_} for any
$f\left(t\right),g\left(t\right)\in H$
$$\left(R\left(\lambda \right)f,g\right)_{L_{m}^{2} \left(\alpha
,\beta \right)} \to \left(R\left(\lambda
\right)f,g\right)_{L_{m}^{2} \left(\mathcal{I}\right)}\text{ as
}\left(\alpha ,\beta \right)\uparrow \mathcal{I}$$ uniformly in
$\lambda $ from any compact set from $\mathbb{C}\setminus
\mathbb{R}^1$, we see that, in view of the analyticity of the
operator function $M\left(\lambda \right)$ and vector-function
$W\left(t,l_{\bar{\lambda }} ,m\right)\, F\left(t,l_{\bar{\lambda
}} \right)$ (see {\eqref{GrindEQ__251_}} with $l=l_\lambda)$ the
operator $R\left(\lambda \right)$ depends analytically on the
non-real $\lambda $ in view of \cite[p. 195]{Kato}. Theorem
\ref{th4} is proved.
\end{proof}

For $r=1$, $n_\lambda[y]=H_\lambda(t)y$ Theorem \ref{th4} is known
\cite{Khrab4}.

Let us notice that if
$L_m^2(\mathcal{I})=\mathop{L_m^2}\limits^{\circ\ }(\mathcal{I})$
then Theorem \ref{th4} is valid with $f(t)\in \overset{\circ}{H}$
instead of $f(t)\in H$ and without condition \eqref{GrindEQ__55_}
with $P=I_r$ for infinite $\mathcal{I}$.

The following theorem establishes a relationship between the
resolvents $R(\lambda)$ that are given by Theorem \ref{th4} and
the boundary value problems for equation \eqref{GrindEQ__1_},
\eqref{GrindEQ__2_} with boundary conditions depending on the
spectral parameter. Similarly to the case $n_{\lambda }
\left[y\right]\equiv 0$ \cite{Khrab6} we see that the pair
$\left\{y,f\right\}$ satisfies the boundary conditions that
contain both $y$ derivatives and $f$ derivatives of corresponding
orders at the ends of the interval.

\begin{theorem}\label{th5}
Let the interval $\mathcal{I}=\left(a,b\right)$ be finite and
condition \eqref{GrindEQ__55_} with $P=I_{r} $ holds.

Let the operator-functions $\mathcal{M}_{\lambda },
\mathcal{N}_{\lambda} \in B\left(\mathcal{H}^{r} \right)$ depend
analytically on the non-real $\lambda $,
\begin{equation} \label{GrindEQ__70_}
\mathcal{M}_{\bar{\lambda }}^{*} \left[\Re Q\left(a,l_{\lambda }
\right)\right]\mathcal{M}_{\lambda } =\mathcal{N}_{\bar{\lambda
}}^{*} \left[\Re Q\left(b,l_{\lambda }
\right)\right]\mathcal{N}_{\lambda } \quad \left(\Im  \lambda \ne
0\right),
\end{equation}
where $Q\left(t,l_{\lambda } \right)$ is the coefficient of
equation \eqref{GrindEQ__54_} corresponding by Theorem \ref{th1}
to equation \eqref{GrindEQ__1_},
\begin{equation} \label{GrindEQ__71_}
\left\| \mathcal{M}_{\lambda } h\right\| +\left\|
\mathcal{N}_{\lambda } h\right\| >0\quad \left(0\ne h\in
\mathcal{H}^{r} ,\, \Im \lambda \ne 0\right),
\end{equation}
the lineal $\left\{\mathcal{M}_{\lambda } h\oplus
\mathcal{N}_{\lambda } h\left|h\in \mathcal{H}^{r} \right.
\right\}\subset \mathcal{H}^{2r} $ is a maximal
$\mathcal{Q}$-nonnegative subspace if $\Im  \lambda \ne 0$, where
$\mathcal{Q}=\left(\Im \lambda \right)\mathrm{diag} \left(\Re
Q\left(a,l_{\lambda } \right),\, -\Re Q\left(b,l_{\lambda }
\right)\right)$ (and therefore
\begin{equation} \label{GrindEQ__72_}
\Im  \lambda \left(\mathcal{N}_{\lambda }^{*} \left[\Re
Q\left(b,l_{\lambda } \right)\right]\mathcal{N}_{\lambda }
-\mathcal{M}_{\lambda }^{*} \left[\Re Q\left(a,l_{\lambda }
\right)\right]\mathcal{M}_{\lambda } \right)\le 0\quad \left.
\left(\Im  \lambda \ne 0\right)\right).
\end{equation}

Then

1$^\circ$. For any $f\left(t\right)\in H$ the boundary problem
that is obtained by adding the boundary conditions

\begin{equation} \label{GrindEQ__73_}
\exists h=h\left(\lambda ,f\right)\in \mathcal{H}^{r} :\, \,
\vec{y}\left(a,\lambda ,m,f\right)=\mathcal{M}_{\lambda } h,\, \,
\, \vec{y}\left(b,\lambda ,m,f\right)=\mathcal{N}_{\lambda }
h\end{equation} to the equation \eqref{GrindEQ__1_}, where
$\vec{y}\left(t,\lambda ,f\right)$ is defined by
\eqref{GrindEQ__25_}, has the unique solution $R\left(\lambda
\right)f$ in $C^r(\bar{\mathcal{I}},\mathcal{H})$ as $\Im  \lambda
\ne 0$. It is generated by the resolvent $R\left(\lambda \right)$
that is constructed, as in Theorem \ref{th4}, using the c.o.
\begin{equation} \label{GrindEQ__74_}
M\left(\lambda \right)=-\frac{1}{2} \left(X_{\lambda }^{-1}
\left(a\right)\mathcal{M}_{\lambda } +X_{\lambda }^{-1}
\left(b\right)\mathcal{N}_{\lambda } \right)\left(X_{\lambda
}^{-1} \left(a\right)\mathcal{M}_{\lambda } -X_{\lambda }^{-1}
\left(b\right)\mathcal{N}_{\lambda } \right)^{-1} \,
\left(iG\right)^{-1} ,
\end{equation}
where
$$\left(X_{\lambda }^{-1} \left(a\right)\mathcal{M}_{\lambda
} -X_{\lambda }^{-1} \left(b\right)\mathcal{N}_{\lambda }
\right)^{-1} \in B\left(\mathcal{H}^{r} \right)\quad \left(\Im
\lambda \ne 0\right),$$
$X_\lambda(t)$ is an operator solution of
the homogeneous equation \eqref{GrindEQ__54_} such that
$X_{\lambda } \left(0\right)=I_{r} $.

2$^\circ$. For any operator $R(\lambda)$ from Theorem \ref{th4}
vector-function $R(\lambda)f$ $(f\in H)$ is a solution of some
boundary problem as in 1$^\circ$.
\end{theorem}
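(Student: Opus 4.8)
The plan is to transfer everything to the equivalent first-order system \eqref{GrindEQ__54_} via Theorem \ref{th1} and to read off the boundary behaviour of the resolvent directly from its integral kernel. First I would compute the endpoint values of the solution $x_\lambda(t,F)$ given by \eqref{GrindEQ__57_}. Since on $(a,b)$ one has $\operatorname{sgn}(s-a)=+1$ and $\operatorname{sgn}(s-b)=-1$, writing $\Phi=\int_a^b X_{\bar\lambda}^*(s)W(s,l_{\bar\lambda},m)F(s)\,ds$ gives
\begin{equation*}
x_\lambda(a)=X_\lambda(a)\Bigl(M(\lambda)-\tfrac12(iG)^{-1}\Bigr)\Phi,\qquad x_\lambda(b)=X_\lambda(b)\Bigl(M(\lambda)+\tfrac12(iG)^{-1}\Bigr)\Phi .
\end{equation*}
Substituting \eqref{13}, so that $M(\lambda)-\tfrac12(iG)^{-1}=(\mathcal P(\lambda)-I)(iG)^{-1}$ and $M(\lambda)+\tfrac12(iG)^{-1}=\mathcal P(\lambda)(iG)^{-1}$, and then the explicit c.o.\ \eqref{GrindEQ__74_} (which yields $\mathcal P(\lambda)=-X_\lambda^{-1}(b)\mathcal N_\lambda\,T^{-1}$ and $\mathcal P(\lambda)-I=-X_\lambda^{-1}(a)\mathcal M_\lambda\,T^{-1}$, where $T=X_\lambda^{-1}(a)\mathcal M_\lambda-X_\lambda^{-1}(b)\mathcal N_\lambda$), I obtain $x_\lambda(a)=\mathcal M_\lambda h$ and $x_\lambda(b)=\mathcal N_\lambda h$ with $h=-T^{-1}(iG)^{-1}\Phi$. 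Recalling that $x_\lambda(t)=\vec y(t,\lambda,m,f)$ when $F=F(t,l_{\bar\lambda},m)$, this is exactly the boundary condition \eqref{GrindEQ__73_}.

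The main obstacle is the invertibility $T^{-1}\in B(\mathcal H^r)$ asserted in 1$^\circ$. For injectivity I would argue: if $Th=0$ then $c:=X_\lambda^{-1}(a)\mathcal M_\lambda h=X_\lambda^{-1}(b)\mathcal N_\lambda h$, and $\vec w(t)=X_\lambda(t)c$ is a homogeneous solution with $\vec w(a)=\mathcal M_\lambda h$, $\vec w(b)=\mathcal N_\lambda h$. The Lagrange identity underlying \eqref{GrindEQ__60_} gives $\tfrac1{2\Im\lambda}\bigl(U[\vec w(b)]-U[\vec w(a)]\bigr)=(\Delta_\lambda(a,b)c,c)\ge 0$, whereas the maximal $\mathcal Q$-nonnegativity in the form \eqref{GrindEQ__72_} forces $(\Im\lambda)\bigl(U[\vec w(b)]-U[\vec w(a)]\bigr)\le 0$; hence $(\Delta_\lambda(a,b)c,c)=0$, and the strict estimate \eqref{GrindEQ__56_} (valid for all $\lambda$ with some $\delta(\lambda)>0$ because \eqref{GrindEQ__55_} holds with $P=I_r$) yields $c=0$. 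Then $\mathcal M_\lambda h=\mathcal N_\lambda h=0$, so $h=0$ by \eqref{GrindEQ__71_}. The genuinely hard point is surjectivity with bounded inverse: in infinite dimensions injectivity is not enough, and I expect to obtain the closed full range from the \emph{maximality} of the $\mathcal Q$-nonnegative lineal $\{\mathcal M_\lambda h\oplus\mathcal N_\lambda h\}$ via the angular-operator description of maximal semidefinite subspaces in a Krein space, i.e.\ from the description of c.o.'s in \cite{Khrab5} under \eqref{star8} (which \eqref{GrindEQ__55_} guarantees). This is the step I would spend the most care on.

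Granting invertibility, $M(\lambda)$ in \eqref{GrindEQ__74_} is analytic and satisfies $M(\lambda)=M^*(\bar\lambda)$ (equivalently $\mathcal P(\bar\lambda)^*G+G\mathcal P(\lambda)=G$), which is precisely the content of the compatibility relation \eqref{GrindEQ__70_} via \eqref{GrindEQ__47+_}. To confirm that $M(\lambda)$ is a c.o.\ I would check \eqref{GrindEQ__47++_}: on the finite interval the limit is $U[x_\lambda(b)]-U[x_\lambda(a)]=\bigl((\mathcal N_\lambda^*[\Re Q(b,l_\lambda)]\mathcal N_\lambda-\mathcal M_\lambda^*[\Re Q(a,l_\lambda)]\mathcal M_\lambda)h,h\bigr)$, so $(\Im\lambda)(U[x_\lambda(b)]-U[x_\lambda(a)])\le0$ is exactly \eqref{GrindEQ__72_}. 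Theorem \ref{th4} then produces $R(\lambda)$, whose value $R(\lambda)f=y_1(t,\lambda,f)$ solves \eqref{GrindEQ__1_} while the full vector $\vec y$ satisfies \eqref{GrindEQ__73_}; uniqueness is immediate, since a second solution gives a homogeneous $w$ with $\vec w(a)=\mathcal M_\lambda h$, $\vec w(b)=\mathcal N_\lambda h$, whence $X_\lambda^{-1}(a)\mathcal M_\lambda h=X_\lambda^{-1}(b)\mathcal N_\lambda h$, i.e.\ $Th=0$, so $h=0$ and $w\equiv0$ by the already established injectivity.

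For 2$^\circ$ I would run the construction backwards. Given a c.o.\ $M(\lambda)=(\mathcal P(\lambda)-\tfrac12 I)(iG)^{-1}$ from Theorem \ref{th4}, set $\mathcal M_\lambda=X_\lambda(a)(\mathcal P(\lambda)-I)$ and $\mathcal N_\lambda=X_\lambda(b)\mathcal P(\lambda)$. These depend analytically on $\lambda$; they satisfy \eqref{GrindEQ__71_} because $X_\lambda(a),X_\lambda(b)$ are invertible and $(\mathcal P(\lambda)-I)h=\mathcal P(\lambda)h=0$ forces $h=0$; and using \eqref{GrindEQ__47+_} the compatibility \eqref{GrindEQ__70_} reduces to $(\mathcal P(\bar\lambda)-I)^*G(\mathcal P(\lambda)-I)=\mathcal P(\bar\lambda)^*G\mathcal P(\lambda)$, i.e.\ $\mathcal P(\bar\lambda)^*G+G\mathcal P(\lambda)=G$, which is just $M(\lambda)=M^*(\bar\lambda)$. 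Finally the same endpoint computation as in the first paragraph shows $x_\lambda(a)=\mathcal M_\lambda h$, $x_\lambda(b)=\mathcal N_\lambda h$ with $h=(iG)^{-1}\Phi$, so $R(\lambda)f$ solves the boundary problem \eqref{GrindEQ__1_}, \eqref{GrindEQ__73_} attached to these $\mathcal M_\lambda,\mathcal N_\lambda$, which is of the type in 1$^\circ$ (its $\mathcal Q$-nonnegativity and maximality again following from \eqref{GrindEQ__47++_} and the c.o.-description of \cite{Khrab5}).
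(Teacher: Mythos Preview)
Your proposal is correct and follows essentially the same route as the paper: reduce to the first-order system via Theorem~\ref{th1}, then use the characteristic-operator machinery of Theorem~\ref{th4} together with the description of c.o.'s on a finite interval from \cite[Remark~1.1]{Khrab5}. The paper's own proof is just the one-line ``follows from Theorems~\ref{th1},~\ref{th4} and \cite[Remark~1.1]{Khrab5}'', so you have simply unpacked what that remark contains---the endpoint computation, the invertibility of $T=X_\lambda^{-1}(a)\mathcal M_\lambda-X_\lambda^{-1}(b)\mathcal N_\lambda$, and the identification of the c.o.\ \eqref{GrindEQ__74_} with the boundary data $(\mathcal M_\lambda,\mathcal N_\lambda)$---and you correctly flag that the bounded-inverse part of $T^{-1}\in B(\mathcal H^r)$ (not just injectivity) is the place where the \emph{maximality} of the $\mathcal Q$-nonnegative lineal and the c.o.\ description of \cite{Khrab5} under \eqref{star8} are genuinely needed.
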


Let us notice that if $f\left(t\right)\mathop{=}\limits^{H}
g\left(t\right)$ then in boundary conditions \eqref{GrindEQ__73_}:
$\vec{y}\left(t,l,m,f\right)=\vec{y}\left(t,l,m,g\right)$ in view
of \eqref{GrindEQ__25_} and Proposition \ref{prop1}.

\begin{proof}
The proof of Theorem \ref{th5} follows from Theorems \ref{th1},
\ref{th4} and from \cite[Remark 1.1]{Khrab5}.
\end{proof}

For the case $n_{\lambda } \left[y\right]\equiv 0$, Theorem
\ref{th5} is known \cite{Khrab5}.

The example below show that the following is possible: for some
resolvent $R\left(\lambda \right)$ from Theorem \ref{th4} $\exists
f_{0} \left(t\right)\mathop{\ne }\limits^{H} 0$ such that
$m\left[f_{0} \right]=0$ and therefore the "resolvent" equation
\eqref{GrindEQ__1_} for $R\left(\lambda \right)f_{0} $
 is homogeneous but $R\left(\lambda
\right)f_0\mathop{\ne }\limits^{H} 0,\, \, \, \Im \lambda \ne 0$.

\begin{example}\label{ex2}
Let $m$ in \eqref{GrindEQ__1_} be such expression that equation
$m\left[f\right]=0$ has a solution $f_{0}
\left(t\right)\mathop{\ne }\limits^{H} 0$. Let in Theorem
\ref{th5}: $\mathcal{M}_{\lambda } =\left(\begin{array}{cc} {I_{n}
} & {0} \\ {0} & {0}
\end{array}\right),\, \, \mathcal{N}_{\lambda }
=\left(\begin{array}{cc} {0} & {I}_n \\ {0} & {0}
\end{array}\right)$, $R\left(\lambda \right)$ is the corresponding
resolvent. Then $R(\lambda)f_0\not= 0,\Im\lambda\not= 0$, while if
$\mathcal{M}_{\lambda } =\left(\begin{array}{cc} {0} & {0} \\
{I_{n} } & {0}
\end{array}\right),\, \, \mathcal{N}_{\lambda }
=\left(\begin{array}{cc} {0} & {0} \\ {0} & {I_{n} }
\end{array}\right)$ then for the corresponding resolvent
$R\left(\lambda \right)f_{0} \mathop{=}\limits^{H} 0,\, \, \Im
\lambda \ne 0$ (and therefore in view of \cite[p. 87]{DSnoo1}
$E_\infty f_0=0$ for generalized spectral family $E_\mu$, which
corresponds to $R(\lambda)$ by \eqref{GrindEQ__3_}).

It is known \cite[p.86]{DSnoo1} that the operator-function
$R\left(\lambda \right)$ \eqref{GrindEQ__64_}-\eqref{GrindEQ__66_}
can be represented in the form
\begin{equation} \label{GrindEQ__78_}
R\left(\lambda \right)=\left(T\left(\lambda \right)-\lambda \right)^{-1} ,
\end{equation}
where $T\left(\lambda \right)$ is such linear relation that
$$\Im
T\left(\lambda \right)\le 0\, \, \left(\max \right),\, \,
T\left(\bar{\lambda }\right)=T^{*} \left(\lambda \right),\, \,
\lambda \in \mathbb{C}^{+} ,$$ the Cayley transform $C_{\mu }
\left(T\left(\lambda \right)\right)$ defines a holomorphic
function in $\lambda \in \mathbb{C}_{+} $ for some (and hence for
all) $\mu \in \mathbb{C}_{+} $. Applications of abstract relations
of $T(\lambda)$ type (Nevanlinna families) to the theories of
boundary relations and of generalized resolvents are proposed in
\cite{DHMdS1,DHMdS2}
\end{example}

The description of $T\left(\lambda \right)$ corresponding to
$R\left(\lambda \right)$ from Theorem \ref{th4} in regular case
gives

\begin{corollary} Let $\mathcal{I}$ is finite and condition
\eqref{GrindEQ__55_} with $P=I_{r} $ holds. Let us consider the
relation $T\left(\lambda \right)=\overline{T'\left(\lambda
\right)}$ as $\Im \lambda \ne 0$, where
\begin{multline}\label{GrindEQ__782+_}
T'\left(\lambda
\right)=\bigg\{\left.\left\{\tilde{y}\left(t\right),
\tilde{f}\left(t\right)\right\}\right|\tilde{y}\left(t\right)\mathop{=}\limits^{L_{m}^{2}
\left(\mathcal{I}\right)} y\left(t\right)\in C^{r}
\left(\bar{\mathcal{I}}\right),
\tilde{f}\left(t\right)\mathop{=}\limits^{L_{m}^{2}
\left(\mathcal{I}\right)} f\left(t\right)\in H,\left(l-n_{\lambda
}
\right)\left[y\right]=m\left[f\right],\\\vec{y}\left(t,l-n_{\lambda
} ,m,f\right)\text{satisfy boundary condition}
\\\exists h=h\left(\lambda ,f\right)\in
\mathcal{H}^{r} :\vec{y}\left(a,l-n_{\lambda }
,m,f\right)=\mathcal{M}_{\lambda } h,\, \,
\vec{y}\left(b,l-n_{\lambda } ,m,f\right)=\mathcal{N}_{\lambda }
h,\\\text{ where operators
}\mathcal{M}_\lambda,\mathcal{N}_\lambda\text{ satisfy the
conditions of Theorem \ref{th5},}\\
\vec{y}\left(t,l-n_{\lambda } ,m,f\right)\mathop{=}\limits^{def}
\vec{y}\left(t,l_{\lambda } ,m,f\right)\left|_{m=0\, in\,
l_{\lambda } } \right. =\\
=\begin{cases} \left(\sum\limits _{j=0}^{n-1}\oplus
y^{\left(j\right)} \left(t\right) \right)\oplus \sum\limits
_{j=1}^{n}\oplus  \left(y^{\left[s'-j\right]}
\left(t\left|l-n_{\lambda } \right. \right)-f^{\left[s-j\right]}
\left(t\left|m\right. \right)\right),& r=2n \\
\left(\sum\limits _{j=0}^{n-1}\oplus y^{\left(j\right)}
\left(t\right) \right)\oplus \left(\sum\limits _{j=1}^{n}\oplus
\left(y^{\left[r-j\right]} \left(t\left|l-n_{\lambda } \right.
\right)-f^{\left[s-j\right]} \left(t\left|m\right.
\right)\right)\right)\oplus \left(-iy^{\left(n\right)}
\left(t\right)\right),& r=2n+1>1
\\\qquad \big(\text{here }s'=\text{order of expression }
l-n_{\lambda },\\\qquad\ y^{[0]}(t|l-n_\lambda)=-{i\over 2}
\left(q_1(t,\lambda)-\tilde q_1(t,\lambda)\right)y\text{ as }s'=1
\\\qquad\ y^{\left[k'\right]} \left(t\left|l-n_{\lambda }
\right. \right)\equiv 0\text{ as }k'<\left[\frac{s'}{2} \right],\
f^{\left[k\right]} \left(t\left|m\right.
\right)\equiv 0\text{ as }k<\frac{s}{2} \big) \\
y\left(t\right),& r=1
\end{cases}\bigg\}.
\end{multline}

Then

1${}^\circ$. $\left(T\left(\lambda \right)-\lambda \right)^{-1} $
is equal to resolvent $R(\lambda)$ \eqref{GrindEQ__62_},
\eqref{GrindEQ__63_} from Theorem \ref{th4} corresponding to c.o.
$M\left(\lambda \right)$ \eqref{GrindEQ__74_}.

2${}^\circ$. Let $R\left(\lambda \right)$ is resolvent
\eqref{GrindEQ__62_}, \eqref{GrindEQ__63_} from Theorem \ref{th4}.
Then $R\left(\lambda \right)=\left(T\left(\lambda \right)-\lambda
\right)^{-1} $, where $T\left(\lambda \right)$ is some relation as
in item 1$^\circ$.

\end{corollary}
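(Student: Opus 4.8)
The plan is to read the Corollary as a translation of Theorems~\ref{th4} and~\ref{th5} into the language of linear relations: the content of $1^\circ$ is that the relation $T(\lambda)$ whose existence is guaranteed abstractly by Example~\ref{ex2} (i.e. by \cite{DSnoo1}) is exactly the closure of $T'(\lambda)$, and $2^\circ$ is then a formal consequence of Theorem~\ref{th5}.$2^\circ$. Since for a linear relation one has $(T(\lambda)-\lambda)^{-1}=\{\{\tilde f-\lambda\tilde y,\tilde y\}\mid\{\tilde y,\tilde f\}\in T(\lambda)\}$, I would prove $1^\circ$ by showing that the graph bijection $\{\tilde y,\tilde f\}\mapsto\{\tilde f-\lambda\tilde y,\tilde y\}$ carries $T'(\lambda)$ onto the graph of the restriction of $R(\lambda)$ to $H$, and then pass to closures.

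First I would record the substitution linking the two equations. Because $l_\lambda=l-\lambda m-n_\lambda$, one has $(l-n_\lambda)[y]=l_\lambda[y]+\lambda m[y]$, so for $g:=f-\lambda y$ the equation $(l-n_\lambda)[y]=m[f]$ is equivalent to $l_\lambda[y]=m[f]-\lambda m[y]=m[g]$. Thus a pair $\{\tilde y,\tilde f\}\in T'(\lambda)$, with representatives $y\in C^{r}$, $f\in H$, corresponds to the solution $y$ of the resolvent equation $l_\lambda[y]=m[g]$ with $g=f-\lambda y\in H$, and conversely $f=g+\lambda y$ recovers the pair; on finite $\mathcal{I}$ both $f,g$ automatically lie in $H$ since $y\in C^{r}\subseteq C^{s}$. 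Under this correspondence Theorem~\ref{th4} gives $R(\lambda)g=y_{1}\overset{L^2_m}{=}\tilde y$, while $g\overset{L^2_m}{=}\tilde f-\lambda\tilde y$, so the two graphs will coincide provided the boundary data also match.

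The matching of the boundary data is the main obstacle, namely the identity
\[
\vec y(t,l_\lambda,m,g)=\vec y(t,l-n_\lambda,m,f),\qquad g=f-\lambda y,
\]
which is precisely why the Corollary defines $\vec y(t,l-n_\lambda,m,f)$ as $\vec y(t,l_\lambda,m,f)$ with $m$ set equal to $0$ inside $l_\lambda$. I would derive it from the affine-linear recursive structure of the quasi-derivatives \eqref{GrindEQ__11_}--\eqref{GrindEQ__12_}: for the upper quasi-derivatives $y^{[k]}$ ($k=n,\dots,r$) that enter the lower block of \eqref{GrindEQ__25_} one has $y^{[k]}(t|l_\lambda)=y^{[k]}(t|l-n_\lambda)-\lambda\,y^{[k]}(t|m)$, the expressions being padded to the common order $r$. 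Substituting this together with $g^{[s-j]}(t|m)=f^{[s-j]}(t|m)-\lambda\,y^{[s-j]}(t|m)$ into \eqref{GrindEQ__25_}, the two $\lambda$-terms cancel because $y^{[r-j]}(t|m)=y^{[s-j]}(t|m)$ for the relevant $j$ and vanish otherwise, all by Lemma~\ref{lm2}; the same lemma reduces $y^{[r-j]}(t|l-n_\lambda)$ to the $y^{[s'-j]}(t|l-n_\lambda)$ appearing in \eqref{GrindEQ__782+_}. The cases $r=2n$ and $r=2n+1$, as well as the degenerate orders ($r=1$ and $s'=1$), must be checked separately exactly as in the case split of \eqref{GrindEQ__25_} and \eqref{GrindEQ__782+_}; the plain-derivative components $y^{(0)},\dots,y^{(n-1)}$ require no decomposition since they enter both vectors unchanged.

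Granting the boundary identity, the two inclusions follow at once. For $g\in H$ Theorem~\ref{th5}.$1^\circ$ produces the unique solution $y$ of \eqref{GrindEQ__1_} obeying \eqref{GrindEQ__73_} on $\vec y(t,l_\lambda,m,g)$, hence on $\vec y(t,l-n_\lambda,m,f)$ with $f=g+\lambda y$; thus $\{\tilde y,\tilde f\}\in T'(\lambda)$ and $R(\lambda)|_H\subseteq(T'(\lambda)-\lambda)^{-1}$. Reading the same correspondence backwards and invoking the uniqueness part of Theorem~\ref{th5}.$1^\circ$ gives the reverse inclusion, so $(T'(\lambda)-\lambda)^{-1}=R(\lambda)|_H$. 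Since $H$ is dense in $L^2_m(\mathcal I)$ and $R(\lambda)$ is bounded by \eqref{GrindEQ__66_}, and since inversion and the affine shift $S\mapsto S-\lambda$ commute with closure for linear relations, passing to closures yields $(T(\lambda)-\lambda)^{-1}=\overline{R(\lambda)|_H}=R(\lambda)$ with $T(\lambda)=\overline{T'(\lambda)}$, which is $1^\circ$. Finally, $2^\circ$ is immediate: by Theorem~\ref{th5}.$2^\circ$ every resolvent $R(\lambda)$ of Theorem~\ref{th4} is generated by admissible boundary data $\mathcal M_\lambda,\mathcal N_\lambda$, i.e. its c.o. has the form \eqref{GrindEQ__74_}; applying $1^\circ$ to these data gives the required relation $T(\lambda)$.
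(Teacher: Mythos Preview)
Your proof is correct and follows essentially the same route as the paper, which records only that the corollary follows from \eqref{GrindEQ__25_}, Lemma~\ref{lm2}, Theorem~\ref{th5}, and Remark~1.1 of \cite{Khrab5}. You have simply unpacked these references: the substitution $g=f-\lambda y$ and the boundary identity $\vec y(t,l_\lambda,m,g)=\vec y(t,l-n_\lambda,m,f)$ are precisely what one extracts from \eqref{GrindEQ__25_} together with the linearity of quasi-derivatives in the coefficients and Lemma~\ref{lm2}, while the correspondence with the boundary value problem and the converse direction $2^\circ$ are Theorem~\ref{th5} (whose proof in turn invokes Remark~1.1 of \cite{Khrab5}).
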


\begin{proof}
The proof follows from \eqref{GrindEQ__25_}, Lemma \ref{lm2},
Theorem \ref{th5} and Remark 1.1 from \cite{Khrab5}.
\end{proof}

Let in \eqref{GrindEQ__1_}, \eqref{GrindEQ__2_} $n_{\lambda }
\left[y\right]\equiv 0$ l.e. $l_{\lambda } =l-\lambda m$, where
$l=l^{*} ,\, m=m^{*} $ and coefficients of expressions $m$ satisfy
condition \eqref{GrindEQ__52_}.

We consider in $L_{m}^{2} \left(\mathcal{I}\right)$ the linear
relation
\begin{multline} \label{GrindEQ__781_}
\mathcal{L}'_{0} =\bigg\{
\left\{\tilde{y}\left(t\right),\tilde{g}\left(t\right)\right\}
|\tilde{y}\left(t\right)\mathop{=}\limits^{L_{m}^{2}
\left(\mathcal{I}\right)}
y\left(t\right),\tilde{g}\left(t\right)\mathop{=}\limits^{L_{m}^{2}
\left(\mathcal{I}\right)} g\left(t\right),y\left(t\right)\in C^{r}
\left(\bar{\mathcal{I}},\mathcal{H}\right),g\left(t\right)\in
H,l\left[y\right]=m\left[g\right],\\
\vec{y}\left(t,l,m,g\right)\text{ is equal to zero in the edge of
}\mathcal{I}\text{ if this edge is finite and }
\vec{y}\left(t,l,m,g\right)\\\text{ is equal to zero in the some
neighbourhood of the edge of }\mathcal{I}\text{ if this edge is
infinite }\bigg\}
\end{multline}\footnote{Let us notice that
vector-function $g\left(t\right)$ in \eqref{GrindEQ__781_} may be
non-equal to zero in the finite edge or in the some neighbourhood
of infinite edge of $\mathcal{I}$.} where
$\vec{y}\left(t,l,m,g\right)$ is defined by
{\eqref{GrindEQ__782+_}} with $l_{\lambda}=l-\lambda m$, $f=g$.

Below we assume that relation $\mathcal{L}_0^\prime$ consists of
the pairs of $\{y,g\}$ type.

The relation $\mathcal{L}'_{0} $ is symmetric due to the following
Green formula with $\lambda_k=0$:

Let $y_{k} \left(t\right)\in C^{r} \left(\left[\alpha ,\beta
\right],\mathcal{H}\right)$, $f_{k} \left(t\right)\in C^{s}
\left(\left[\alpha ,\beta \right],\mathcal{H}\right)$, $\lambda
_{k} \in \mathbb{C}$, $l\left[y_{k} \right]-\lambda _{k}
m\left[y_{k} \right]=m\left[f_{k} \right],\, \, k=1,2$. Then
\begin{multline} \label{GrindEQ__782_}
\int _{\alpha }^{\beta }m\left\{f_{1} ,y_{2} \right\}dt -\int
_{\alpha }^{\beta }m\left\{y_{1} ,f_{2} \right\}dt +\left(\lambda
_{1} -\bar{\lambda }_{2} \right)\int _{\alpha }^{\beta
}m\left\{y_{1} ,y_{2} \right\}dt =
\\
\left.=i\left(\Re Q\left(t,l_{\lambda } \right)\vec{y}_{1}
\left(t,l_{\lambda _{1} } ,m,f_{1} \right),\vec{y}_{2}
\left(t,l_{\lambda _{2} } ,m,f_{2}
\right)\right)\right|_\alpha^\beta,
\end{multline}
where $\vec{y}_{k} \left(t,l_{\lambda _{k} } ,m,f_{k} \right)$ for
$\lambda _{k} \in \mathbb{R}^{1} $ is defined by
{\eqref{GrindEQ__782+_}} with $l_\lambda=l-\lambda m$.

This formula is a corollary of Theorem \ref{th3} if $\Im \lambda
_{k} \ne 0$. For its proof for example in the case $\lambda _{1}
\in \mathbb{R}^{1} $ we need to modify \eqref{GrindEQ__36_} for
equation $l\left[y_{1} \right]-\left(\lambda _{1} +i\varepsilon
\right)m\left[y\right]=m\left[f_{1} -i\varepsilon y_{1} \right]$
and then to pass to limit in \eqref{GrindEQ__36_} as $\varepsilon
\to +0$.

In general the relation $\mathcal{L}'_{0} $ is not closed. We
denote $\mathcal{L}_{0} =\bar{\mathcal{L}}_{0}^\prime $.

\begin{theorem}\label{th6}
Let $l_{\lambda } =l-\lambda m$ and the conditions of Theorem
\ref{th4} hold. Then the operator $R\left(\lambda \right)$ from
Theorem \ref{th4} is the generalized resolvent of the relation
$\mathcal{L}_{0} $. Let $\mathcal{I}$ be finite and additionally
the condition \eqref{GrindEQ__55_} hold. Then every such
generalized resolvent can be constructed as the operator
$R\left(\lambda \right)$.
\end{theorem}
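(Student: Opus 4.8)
Theorem 3.6 asserts two things. First, for $l_\lambda = l-\lambda m$ (the linear, self-adjoint case), the resolvent $R(\lambda)$ produced by Theorem 3.4 is a generalized resolvent of the relation $\mathcal{L}_0$. Second, in the finite-interval case under the extra positivity condition (2.10), every generalized resolvent of $\mathcal{L}_0$ arises this way. Let me plan a proof.

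Let me think about what "generalized resolvent of $\mathcal{L}_0$" means. A generalized resolvent of a symmetric relation $\mathcal{L}_0$ is a family $R(\lambda) = P_{\mathcal{H}} (\tilde{A}-\lambda)^{-1}|_{\mathcal{H}}$ where $\tilde{A}$ is a self-adjoint extension of $\mathcal{L}_0$ in a larger Hilbert space $\tilde{\mathcal{H}} \supseteq L^2_m(\mathcal{I})$, and $P_{\mathcal{H}}$ is the orthoprojection. Equivalently (by Naimark's theorem / the characterization used in the abstract-relations literature, e.g. \cite{DSnoo1}), $R(\lambda)$ is a generalized resolvent of $\mathcal{L}_0$ iff it has the Nevanlinna-type properties (3.14)–(3.16) established in Theorem 3.4 AND it is "compatible" with $\mathcal{L}_0$ in the sense that $R(\lambda)(\mathcal{L}_0 - \lambda) \subseteq \mathrm{id}$ on the appropriate domain, i.e. for every pair $\{y,g\}\in\mathcal{L}_0$ one has $R(\lambda)(g-\lambda y) = y$ (modulo the null-space of the form $m$). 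So the first task is to connect $R(\lambda)$ constructed analytically via the characteristic operator to the relation $\mathcal{L}_0$.

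Here is the plan. For the first assertion, I would start from the representation (3.78) $R(\lambda) = (T(\lambda)-\lambda)^{-1}$ of Example 3.2 together with the Corollary preceding the theorem, which identifies $T(\lambda)$ explicitly as the closure of the relation $T'(\lambda)$ built from the boundary conditions. When $n_\lambda \equiv 0$ the boundary operators $\mathcal{M}_\lambda,\mathcal{N}_\lambda$ can be taken $\lambda$-independent, so $T(\lambda) = T$ is a single $\lambda$-independent self-adjoint relation extending $\mathcal{L}_0$; then $R(\lambda)=(T-\lambda)^{-1}$ is literally the resolvent of a self-adjoint extension, hence a (canonical, and in particular generalized) resolvent of $\mathcal{L}_0$. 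In the general $\lambda$-dependent case I would instead verify the defining inclusion directly: take any $\{y,g\}\in\mathcal{L}_0'$, so $l[y]=m[g]$ with $\vec{y}$ vanishing (or compactly supported) near the ends; set $f:=g-\lambda y$, so that $(l-\lambda m)[y]=m[f]$, i.e. $y$ solves the resolvent equation (3.1) with right-hand side $f$. Because $\vec{y}(t,l,m,g)$ vanishes at the ends, the boundary conditions (3.73) are satisfied with $h=0$, so by Theorem 3.5.1$^\circ$ the unique solution is $R(\lambda)f = y$. Thus $R(\lambda)(g-\lambda y)=y$ for all $\{y,g\}\in\mathcal{L}_0'$, and by closure for all of $\mathcal{L}_0$; combined with properties (3.14)–(3.16) and the criterion of \cite{DSnoo1}, this identifies $R(\lambda)$ as a generalized resolvent of $\mathcal{L}_0$.

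For the second ("completeness") assertion I would invoke the description of all generalized resolvents via boundary conditions. By the characterization in \cite{DSnoo1}/\cite{Khrab5}, every generalized resolvent of $\mathcal{L}_0$ is of the form $(T(\lambda)-\lambda)^{-1}$ with $T(\lambda)$ a Nevanlinna family of relations extending $\mathcal{L}_0$, and in the regular (finite-interval) situation each such $T(\lambda)$ is realized by $\lambda$-dependent boundary conditions of exactly the type $(3.70)$–$(3.72)$, with $\{\mathcal{M}_\lambda h \oplus \mathcal{N}_\lambda h\}$ a maximal $\mathcal{Q}$-nonnegative subspace. Condition (3.55) with $P=I_r$ guarantees (via (3.56) and the results of \cite{Khrab5}) that the characteristic operator $M(\lambda)$ of (2.6) corresponding to these boundary conditions exists and is given by (3.74), so Theorem 3.4 applied to this $M(\lambda)$ returns precisely the given $R(\lambda)$. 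The main obstacle I anticipate is the reverse direction: showing that the abstract boundary-space parametrization of generalized resolvents matches the concrete $\mathcal{Q}$-nonnegativity parametrization of Theorem 3.5, i.e. that no generalized resolvent is "missed." This requires the regular-case exhaustion result — that in the finite, $P=I_r$ setting the map from boundary data $(\mathcal{M}_\lambda,\mathcal{N}_\lambda)$ (equivalently, from characteristic operators $M(\lambda)$) onto generalized resolvents is surjective. I would lean on the corresponding completeness statement from \cite{Khrab5} (the $n_\lambda\equiv 0$ theory), transported to the present equation through the reduction of Theorem 3.1, since here the reduction is exact and $\lambda$-linear. The delicate bookkeeping is verifying that the null-space of the form $m$ (the nontrivial kernel highlighted in Proposition 3.1 and Example 3.1) is handled consistently — that passing to $L^2_m(\mathcal{I})$ and factoring out $m$-null elements does not create or destroy extensions — but since $R(\lambda)$ annihilates that kernel (as noted right after Theorem 3.4) and $\mathcal{L}_0$ is defined modulo the same kernel, this is compatible.
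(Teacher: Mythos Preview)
Your plan for the first assertion is essentially the paper's: verify $R(\lambda)(\mathcal{L}_0-\lambda)\subseteq\mathbf{I}$ directly and combine with (3.14)--(3.16) and the criterion of \cite{DSnoo1}. One caveat: you invoke Theorem~3.5 to conclude $R(\lambda)f=y$, but Theorem~3.5 assumes $\mathcal{I}$ finite, whereas the first assertion does not. The paper avoids this by arguing directly from the Green formula (3.82), together with the identity $\overrightarrow{(\tilde y-y)}(t,l-\lambda m,m,0)=\overrightarrow{\tilde y}(t,l-\lambda m,m,g-\lambda y)-\vec y(t,l,m,g)$ relating the two quasi-derivative vectors; you should do the same.

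For the second assertion your plan has a genuine gap. You correctly identify the obstacle---showing that no generalized resolvent is missed---but then propose to resolve it by ``leaning on the corresponding completeness statement from \cite{Khrab5}'' transported through Theorem~3.1. That completeness statement is not available in the form you need: \cite{Khrab5} describes characteristic operators of the first-order system \eqref{GrindEQ__46_} in $L^2_W$, but the generalized resolvents here are of the relation $\mathcal{L}_0$ in $L^2_m(\mathcal{I})$ for the higher-order pair $(l,m)$, and the paper's introduction states explicitly that the exhaustion result in the regular case is new. The reduction of Theorem~3.1 does not by itself identify the minimal relation for the first-order system with $\mathcal{L}_0$, nor the ambient Hilbert spaces; an arbitrary generalized resolvent $R_\lambda$ of $\mathcal{L}_0$ is, a priori, just a bounded operator on $L^2_m(\mathcal{I})$ with the Nevanlinna properties, and there is no external result telling you it comes from a characteristic operator.

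The paper fills this gap by a direct construction. It first proves three lemmas: $N_\lambda$ is closed in $L^2_m(\mathcal{I})$ (Lemma~3.8), $\overline{R(\mathcal{L}_0'-\bar\lambda)}=N_\lambda^\perp$ (Lemma~3.9), and---the key regularity step---if $\{\tilde y,\tilde f\}\in\mathcal{L}_0^*-\lambda$ with $\tilde f\overset{L^2_m}{=}f\in H$ then $\tilde y\overset{L^2_m}{=}y\in C^r(\bar{\mathcal{I}},\mathcal{H})$ and $y$ solves \eqref{GrindEQ__1_} (Lemma~3.10). From this, $R_\lambda f$ has the integral representation \eqref{GrindEQ__791_} with a vector $h=h_\lambda(f)\in N^\perp$; one then shows $h$ depends linearly only on $I_\lambda f$, defining $M(\lambda):N^\perp\to N^\perp$. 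The bulk of the work is verifying by hand that this $M(\lambda)$ is a characteristic operator: boundedness (via Cauchy--Schwarz estimates on $I_{\bar\lambda}y$), strong continuity, analyticity (via a difference-quotient argument on $(I_{\lambda\mu}M(\lambda)I_\lambda f,g)$), the dissipativity condition \eqref{GrindEQ__47++_} (via Green's formula applied to $R_\lambda f$), and finally $M(\bar\lambda)=M^*(\lambda)$. None of this is a citation; it is the substance of the proof, and your plan does not supply it.
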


\begin{proof}
In view of \cite{DSnoo1} and taking into account properties
\eqref{GrindEQ__64_}-\eqref{GrindEQ__66_} of the operator
$R\left(\lambda \right)$ it is sufficiently to prove that
$R\left(\lambda \right)\left(\mathcal{L}_{0} -\lambda
\right)\subseteq \mathbf{I}$, where $\mathbf{I}$ is a graph of the
identical operator in $L_{m}^{2} \left(\mathcal{I}\right)$. But
this proposition is proved similarly to \cite[p. 453]{Khrab5}
taking into account \eqref{GrindEQ__782_} and the fact that in
view of (\ref{GrindEQ__78_}) $\overrightarrow{(\tilde y-
y)}(t,l-\lambda m,m,0)=\overrightarrow{\tilde y}(t,l-\lambda
m,m,g-\lambda y)-\vec y(t,l,m,g)$ if $\{y,g\}\in \mathcal{L}'_0$,
$\tilde y=R(\lambda)(g-\lambda y)$.

Conversely let $\mathcal{I}$ be finite, $R_{\lambda } $ a
generalized resolvent of relation $\mathcal{L}_{0} $. We denote
${N}_{\lambda } =\left\{y\left(t\right)\in C^{r}
\left(\mathcal{I},\mathcal{H}\right),\lambda \in \mathcal{B},\,
l\left[y\right]-\lambda m\left[y\right]=0\right\}$. We need the
following

\begin{lemma}\label{lm8}
Let condition \eqref{GrindEQ__55_} hold. Then the lineal
${N}_{\lambda } $ is closed in $L_{m}^{2}
\left(\mathcal{I}\right)$.
\end{lemma}

\begin{proof}The proof of Lemma \ref{lm8} follows from
\eqref{GrindEQ__29_}.\end{proof}

\begin{lemma}\label{lm9}Let $\lambda \in \mathcal{B}$. Then
$\overline{R\left(\mathcal{L}'_{0} -\bar{\lambda
}\right)}={N}_{\lambda }^{\bot } $.
\end{lemma}

\begin{proof} Let $x\left(t\right)\in N_{\lambda },
f\left(t\right)\in H,\, y\left(t\right)$ is a solution of the
following Cauchy problem:
\begin{equation} \label{GrindEQ__78_}
l\left[y\right]-\bar{\lambda }m\left[y\right]=m\left[f\right],\,
\, \, \vec{y}\left(a,l_{\bar{\lambda }} ,m,f\right)=0.
\end{equation}
Then
\begin{equation} \label{GrindEQ__79_}
m\left[f,x\right]=i\left(\Re Q\left(b,l_{\lambda }
\right)\vec{y}\left(b,l_{\bar{\lambda }}
,m,f\right),\vec{x}\left(b,l_{\lambda } ,m,0\right)\right)
\end{equation}
in view of Green formula \eqref{GrindEQ__782_}. Therefore
$\overline{R\left(\mathcal{L}'_{0} -\bar{\lambda
}\right)}\subseteq {N}_{\lambda }^{\bot } $.

Let $g\left(t\right)\in {N}_{\lambda }^{\bot } $. Then $\exists \,
\, H\ni g_{n} \mathop{\to }\limits^{L_{m}^{2}
\left(\mathcal{I}\right)} g$, $g_{n} =x_{n} \oplus f_{n} $, $x_{n}
\in {N}_{\lambda } $, $f_{n} \in {N}_{\lambda }^{\bot }
\Rightarrow f_{n} \in H$. Let $y_n$ be a solution of problem
\eqref{GrindEQ__78_} with $f_{n} $ instead of $f$. In view of
\eqref{GrindEQ__79_} with $f=f_{n} $, one has: $\vec{y}_{n}
\left(b,l_{\bar{\lambda }} ,m,f_{n} \right)=0\Rightarrow f_{n} \in
R\left(\mathcal{L}'_{0} -\bar{\lambda }\right)$. But $f_{n}
\mathop{\to }\limits^{L_{m}^{2} \left(\mathcal{I}\right)} g$.
Therefore $\overline{R\left(\mathcal{L}'_{0} -\bar{\lambda
}\right)}\supseteq {N}_{\lambda }^{\bot } $. Lemma \ref{lm9} is
proved.
\end{proof}

\begin{lemma}\label{lm10}
Let the condition \eqref{GrindEQ__55_} hold, $\lambda \in
\mathcal{B}$. Let $\left\{\tilde{y},\tilde{f}\right\}\in
\mathcal{L}_{0}^{*} -\lambda $,
$\tilde{f}\mathop{=}\limits^{L_{m}^{2} \left(\mathcal{I}\right)}
f\in H$. Then $\tilde{y}\mathop{=}\limits^{L_{m}^{2}
\left(\mathcal{I}\right)} y\in C^{r}
\left(\bar{\mathcal{I}},\mathcal{H}\right)$ and $y\left(t\right)$
satisfies the equation \eqref{GrindEQ__1_}.
\end{lemma}

\begin{proof}
Let $C^{r} \left(\bar{\mathcal{I}},\mathcal{H}\right)\ni y_{0} $
be a solution of \eqref{GrindEQ__1_}. Let $\left\{\varphi ,\psi
\right\}\in \mathcal{L}'_{0} -\bar{\lambda }$. Then $\vec{\varphi
}\left(a,l_{\bar{\lambda }} ,m,\psi \right)=\vec{\varphi
}\left(b,l_{\bar{\lambda }} ,m,\psi \right)=0$ in view of
\eqref{GrindEQ__782+_}, \eqref{GrindEQ__781_}. Hence
$m\left[\varphi ,f\right]=m\left[\psi ,y_{0} \right]$ due to Green
formula \eqref{GrindEQ__782_}. But $m\left[\varphi
,f\right]=\left(\psi ,\tilde{y}\right)_{L_{m}^{2}
\left(\mathcal{I}\right)} $ in view of the definition of the
adjoint relation. Hence $\left(\psi ,\tilde{y}-y_{0}
\right)\mathop{=}\limits_{L_{m}^{2} \left(\mathcal{I}\right)}0$.
Therefore $\tilde y-y_{0} \mathop{=}\limits^{L_{m}^{2}
\left(\mathcal{I}\right)} y-y_{0} \in {N}_{\lambda } $ in view of
Lemmas \ref{lm8}, \ref{lm9}. Hence
$\tilde{y}\mathop{=}\limits^{L_{m}^{2} \left(\mathcal{I}\right)}
y\in C^{r} \left(\bar{\mathcal{I}},\mathcal{H}\right)$ and $y$ is
a solution of \eqref{GrindEQ__1_}. Lemma \ref{lm10} is proved.
\end{proof}

We return to the proof of Theorem \ref{th6}.

Let $f\in H$. Then in view of Lemma \ref{lm10} $R_{\lambda }
f\mathop{=}\limits^{L_{m}^{2} \left(\bar{\mathcal{I}}\right)} y\in
C^{r} \left(\bar{\mathcal{I}},\mathcal{H}\right)$ and $y$
satisfies equation \eqref{GrindEQ__1_}. Therefore taking into
account Theorem \ref{th1}, \cite[p.148]{DalKrein} and
\eqref{GrindEQ__47+_} we have
\begin{equation} \label{GrindEQ__791_}
y\left(t\right)= \left[X_{\lambda }
\left(t\right)\right]_{1}\left\{ h-{1\over 2}\left(iG\right)^{-1}
\left(\int_{a}^b sqn(s-t)X_{\bar{\lambda }}^{*}
\left(s\right)W\left(s,l_{\bar{\lambda }}
,m\right)F\left(s,l_{\bar{\lambda }} ,m\right)ds\right)\right\},
\end{equation}
where $\left[X_{\lambda } \left(t\right)\right]_{1} \in
B\left(\mathcal{H}^{r} ,\mathcal{H}\right)$ is the first row of
operator solution $X_{\lambda } \left(t\right)$ from Theorem
\ref{th4}, that is written in the matrix form, $h=h_{\lambda }
\left(f\right)\in N^{\bot } $ is defined in the unique way in view
of \eqref{GrindEQ__29_} and condition \eqref{GrindEQ__55_}.

Let us prove that $h$ depends on $I_{\lambda }
f\mathop{=}\limits^{def} \int _{a}^{b}X_{\bar{\lambda }}^{*}
\left(s\right)W\left(s,l_{\bar{\lambda }}
,m\right)F\left(s,l_{\bar{\lambda }} ,m\right)ds $ in unique way.
Operator $\left(I_{\lambda } :H\to N^{\bot } \right)$ in view of
Lemma \ref{lm7}. Moreover $I_{\lambda } N^{\bot } =N^{\bot }$ i.e.
$\forall h_{0} \in N^{\bot } \exists f_{0} \in H:\, h_{0}
=I_{\lambda } f_{0} $. For example we can set
\begin{gather}\label{star}
f_0=f_0(t,\lambda)=\left[X_{\bar{\lambda }}
\left(t\right)\right]_{1} \left\{\Delta _{\bar{\lambda }}
\left(\mathcal{I}\right)\left|_{N^{\bot } } \right. \right\}^{-1}
h_{0}
\end{gather}
and to utilize the equality.
\begin{gather*}
W\left(s,l_{\bar{\lambda }} ,m\right)F_{0} \left(s,l_{\bar{\lambda
}} ,m\right)=W\left(s,l_{\bar{\lambda }} ,m\right)X_{\bar{\lambda
}} \left(s\right)\left\{\ldots \right\}^{-1} h_{0}
\end{gather*}

If $f\left(t\right),g\left(t\right)\in H$ are such functions that
$I_{\lambda } f=I_{\lambda } g$, then in view of
\eqref{GrindEQ__791_}
\begin{multline} \label{GrindEQ__80_}
\left.\Im \lambda \left(\left(\Re Q\left(t,l_{\lambda }
\right)\right)\overrightarrow{\Delta y} \left(t,l_{\lambda }
,m,f-g\right),\overrightarrow{\Delta y}\left(t,l_{\lambda }
,m,f-g\right)\right)\right|_\alpha^\beta =
\\
=\left.\Im \lambda \left(\left(\Re Q\left(t,l_{\lambda }
\right)\right)X_{\lambda } \left(t\right)\left(h_{\lambda }
\left(f\right)-h_{\lambda } \left(g\right)\right),X_{\lambda }
\left(t\right)\left(h_{\lambda } \left(f\right)-h_{\lambda }
\left(g\right)\right)\right)\right|_\alpha^\beta,
\end{multline}
where $\Delta y=R_{\lambda } \left[f-g\right]$. But in view of
\eqref{GrindEQ__782_} the left hand side of \eqref{GrindEQ__80_}
is nonpositive since $R_{\lambda } $ has property of
\eqref{GrindEQ__66_} type. The right hand of \eqref{GrindEQ__80_}
is nonnegative in view of \eqref{GrindEQ__36_}. Hence $h_{\lambda
} \left(f\right)=  h_{\lambda } \left(g\right)$ in view of
\eqref{GrindEQ__36_}, \eqref{GrindEQ__55_}. Thus $h$ depends on
$I_{\lambda } f$ in unique way and obviously in the linear way.
Therefore

\begin{equation} \label{GrindEQ__81_}
h=M\left(\lambda \right)I_{\lambda } f,
\end{equation}
where $M\left(\lambda \right):N^{\bot } \to N^{\bot } $ is a
linear operator and so $R_\lambda f$ ($f\in H$) can be represented
in the form (\ref{GrindEQ__63_}).

Further, for definiteness, we will consider the most complicated
case $r=s=2n$.

Let us prove that $M\left(\lambda \right)\in B\left(N^{\bot }
\right),\, \, \Im \lambda \ne 0$. Let $h_{0} \in N^{\bot } ,\, \,
y=R_{\lambda } f_{0} $, where $f_{0} =f_{0} \left(t,\lambda
\right)$ see (\ref{star}). Then in view of (\ref{GrindEQ__791_})
and Theorem \ref{th1} we have
\begin{equation} \label{GrindEQ__811_}
X_{\lambda } \left(t\right)M\left(\lambda \right)h_{0}
=Y\left(t,l_{\lambda } ,m\right)-\mathcal{F}_{0}
\left(t,m\right)-{1\over 2}X_{\lambda }
\left(t\right)\left(iG\right)^{-1}\left( I_{\lambda }
\left(a,t\right)- I_{\lambda } \left(t,b\right)\right)F_{0} ,
\end{equation}
where $Y\left(t,l_{\lambda } ,m\right),\, \, F_{0} =F_{0}
\left(t,l_{\bar{\lambda }} ,m\right),\ \mathcal{F}_{0}
\left(t,m\right)$ are defined by \eqref{GrindEQ__23_},
\eqref{GrindEQ__32_} correspondingly with $y$ and $f_{0} $
correspondingly instead of $f,\ I_{\lambda } \left(0,t\right)F_{0}
$ is defined by \eqref{GrindEQ__60+_}. Therefore
\begin{equation} \label{GrindEQ__812_}
\Delta _{\lambda } \left(a,b\right)M(\lambda) h_{0}
=I_{\bar{\lambda }} y-I_{\bar{\lambda }}
\left(a,b\right)\left(\mathcal{F}_{0} \left(t,m\right)+{1\over
2}X_{\lambda } \left(t\right)\left(iG\right)^{-1}\left( I_{\lambda
} \left(a,t\right)- I_{\lambda } \left(t,b\right)\right)F_{0}
\right),
\end{equation}
where $I_{\bar{\lambda }} y,\, \, I_{\bar{\lambda }}
\left(a,b\right)\left(\ldots \right)\in N^{\bot } $ in view of
\eqref{GrindEQ__61-_}. But
$$
\forall g\in \mathcal{H}^{r} :\, \, \left|\left(I_{\bar{\lambda }}
y,g\right)\right|\le \mathop{\max }\limits_{t\in
\bar{\mathcal{I}}} \left\| X_{\lambda } \left(t\right)\right\|
\left\{\int _{\mathcal{I}}\left\| W\left(t,l_{\lambda }
,m\right)\right\| dt \right\}^{1/2} \left\| R_{\lambda } f_{0}
\right\| _{L_{m}^{2} \left(\mathcal{I}\right)} \left\| g\right\|$$
in view of Cauchy inequality and \eqref{GrindEQ__29_}. Therefore
\begin{gather}\label{star2}
\exists\text{ constant }c\left(\lambda \right):\, \, \,
\left|\left(I_{\bar{\lambda }} y,g\right)\right|\le c\left(\lambda
\right)\left\| y\right\| \, \left\| g\right\|
\end{gather}
since
$$\left\| R_{\lambda } f_{0} \right\| _{L_{m}^{2}
\left(\mathcal{I}\right)} \le \left\| \Delta _{\bar{\lambda }}
\left(a,b\right)\right\| ^{1/2} \left\| \left(\Delta
_{\bar{\lambda }} \left(a,b\right)\left|_{N^{\bot } } \right.
\right)^{-1} \right\| {\left\| h_{0} \right\|
\mathord{\left/{\vphantom{\left\| h_{0} \right\|  \left|\Im
\lambda \right|}}\right.\kern-\nulldelimiterspace} \left|\Im
\lambda \right|}$$ in view of \eqref{GrindEQ__29_}, \eqref{star2}
and inequality: $\left\| R_{\lambda } f_{0} \right\| _{L_{m}^{2}
\left(\mathcal{I}\right)} \le {\left\| f_{0} \right\| _{L_{m}^{2}
\left(\mathcal{I}\right)} \mathord{\left/{\vphantom{\left\| f_{0}
\right\| _{L_{m}^{2} \left(\mathcal{I}\right)}  \left|\Im \lambda
\right|}}\right.\kern-\nulldelimiterspace} \left|\Im \lambda
\right|} $.

Obviously $\left|\left(I_{\bar{\lambda} }
\left(a,b\right)\left(\ldots \right),g\right)\right|$ satisfies
the estimate of type \eqref{star2}. Therefore $M\left(\lambda
\right)\in B(N^{\bot })$.

Now we have to prove that $M(\lambda)$ is a c.o. o equation
\eqref{GrindEQ__51_}.

Let us prove that $M\left(\lambda \right)$ is strogly continuous
for nonreal $\lambda $. To prove this fact it is enough to verify
it for $\Delta _{\lambda } \left(a,b\right)M\left(\lambda
\right)$; while the last one obviously follows from strongly
continuity of vector-function $I_{\bar{\lambda }}R_\lambda
f_0(t,\lambda) $.

In view of \eqref{GrindEQ__29_} we have $\forall
g\in\mathcal{H}^r$
$$
\left(I_{\bar{\lambda }} R_{\lambda } f_{0} \left(t,\lambda
\right)-I_{{\mu }} R_{\bar{\mu }} f_{0}
\left(t,\mu\right),g\right)=m\left[R_{\lambda } f_{0}
\left(t,\lambda \right),\left[X_{\lambda }
\left(t\right)\right]_{1} g\right]-m\left[R_{\mu } f_{0}
\left(t,\mu \right),\left[X_{\mu } \left(t\right)\right]_{1}
g\right].$$

Then the required statement can be derived from the equality
\begin{multline*}
m\left\{\left[X_{\lambda } \left(t\right)-X_{\mu }
\left(t\right)\right]_{1} g,\, \left[X_{\lambda }
\left(t\right)-X_{\mu } \left(t\right)\right]_{1} g\right\}=\\
=\left(W\left(t,l_{\lambda } ,m\right)\left(\left(X_{\lambda }
\left(t\right)-X_{\mu } \left(t\right)\right)g+\left(\lambda-\mu
\right)\mathcal{F}(t,m)\right),\left(X_{\lambda }
\left(t\right)-X_{\mu } \left(t\right)\right)g+\left(\lambda-\mu
\right)\mathcal{F}(t,m)\right),
\end{multline*}
where $\mathcal{F}(t,m)$ is defined by \eqref{GrindEQ__32_} with
$f\left(t\right)=\left[X_{\mu } \left(t\right)\right]_{\, 1} g$,
$\left\| X_{\lambda } \left(t\right)-X_{\mu }
\left(t\right)\right\| \underset{\mu \to \lambda}\to 0$ uniformly
in $t\in \left[a,b\right]$. and from the analogous equality for
$m\{f_0(t,\lambda)-f_0(t,\mu),f_0(t,\lambda)-f_0(t,\mu)\}$.

Let us prove that $M\left(\lambda \right)$ is analytic for nonreal
$\lambda $. To prove this fact it is enough in view of strongly
continuity of $M(\lambda)$ to prove the analyticity in $\lambda$
of $\left(I_{\lambda \mu } M\left(\lambda \right)I_{\lambda }
f,g\right)$, where $f\left(t\right)\in C^{r}
\left(\bar{\mathcal{I}},\mathcal{H}\right),\, g\in \mathcal{H}^{r}
$, $(\Im\lambda)(\Im\mu)>0$,
$$I_{\lambda \mu } =\int _{a}^{b}X_{\mu }^{*}
\left(t\right)W\left(t,l_{\mu } ,m\right)X_{\lambda }
\left(t\right) dt\, \in \, B\left(N^{\bot } \right),$$
$I_{\lambda\mu }^{-1} \in B\left(N^{\bot } \right)$ if
$\left|\lambda -\mu \right|$ is sufficiently small. In view of
\eqref{GrindEQ__812_}, \eqref{GrindEQ__63_}, Theorem \ref{th1},
\eqref{GrindEQ__29_}, \eqref{GrindEQ__251_}, \eqref{GrindEQ__7_}
we have
\begin{multline}\label{star4}
\left(I_{\lambda \mu } M\left(\lambda \right)I_{\lambda }
f,g\right)=m\left[R_{\lambda } f,\left[X_{\mu }
\left(t\right)\right]_{\, 1} g\right]+\left(\lambda -\mu
\right)\int _{a}^{b}\left(\left(R_{\lambda }
f\right)^{\left[n\right]} \left(t\left|m\right. \right),\,
g^{\left(n\right)} \left(t\right)\right) dt+\\+ \text{terms
independent on }R_\lambda f\text{ and analytic in }\lambda,
\end{multline}
where $g^{\left(n\right)} \left(t\right)\mathop{=}\limits^{def}
\left(p_{n} -\bar{\mu }\tilde{p}_{n} \right)^{-1}
\left(\left[X_{\mu } \right]_{\, 1} g\right)^{\left[n\right]}
\left(t\left|m\right. \right)$.

For scalar or vector-function $F\left(\lambda \right)$ let us
denote $$\Delta _{km} F\left(\lambda \right)=\frac{F\left(\lambda
+\Delta _{k} \lambda \right)-F\left(\lambda \right)}{\Delta _{k}
\lambda } -\frac{F\left(\lambda +\Delta _{m} \lambda
\right)-F\left(\lambda \right)}{\Delta _{m} \lambda }.$$ Let us
denote
$$\mathrm{R}_{n} \left(\lambda \right)=\int
_{a}^{b}\left(\tilde{p}_{n} \left({R}_{\lambda }
f\right)^{\left[n\right]}(t|m) ,g^{\left(n\right)} \right) dt.$$

In view of \eqref{GrindEQ__11_}, \eqref{GrindEQ__52_} we have
\begin{gather}
\left|\Delta _{km} \mathrm{R}_{n} \left(\lambda \right)\right|\le
\left(m\left[\Delta _{km} R_{\lambda } f,\, \Delta _{km}
R_{\lambda } f\right]\right)^{1\over 2}\left(\int_a^b(\tilde p_n
g^{(n)},g^{(n)})dt \right)^{1/2}
\end{gather}

Therefore $\mathrm{R}_{n} \left(\lambda \right)$ depends
analytically on nonreal $\lambda $ in view of analyticity of
$R_{\lambda } $ and so analyticity of $M\left(\lambda \right)$ is
proved in view of \eqref{star4}.

Let us consider the solution $x_{\lambda }
\left(t,F\right)=\mathcal{R}_{\lambda } F$ \eqref{GrindEQ__57_} of
equation \eqref{GrindEQ__51_}. Let us prove that $x_{\lambda }
\left(t,F\right)$ satisfies the condition \eqref{GrindEQ__47++_}.
Let us denote $y\left(t,\lambda ,f\right)=R_{\lambda } f$. Then in
view of Green formula \eqref{GrindEQ__36_}

\begin{equation} \label{GrindEQ__814_}
\left. m\left[y,y\right]-\frac{\Im m\left[y,f\right]}{\Im \lambda
} =\frac{1}{2} \left(\Re Q\left(t,l_{\lambda }
\right)\vec{y}\left(t,l_{\lambda }
,m,f\right),\vec{y}\left(t,l_{\lambda }
,m,f\right)\right)\right|_a^b/\Im\lambda
\end{equation}

But the left hand side of \eqref{GrindEQ__814_} is $\le 0$ since
$R_{\lambda } f$ is generalized resolvent. So
\begin{equation} \label{GrindEQ__815_}
\forall f\in H:\, \, \left.\Re \left(Q\left(t,l_{\lambda }
\right)\vec{y}\left(t,l_{\lambda }
,m,f\right),\vec{y}\left(t,l_{\lambda }
,m,f\right)\right)\right|_a^b /\Im \lambda \le 0.
\end{equation}
But for every $\mathcal{H}^{r} $-valued $F\left(t\right)\in
L_{W\left(t,l_{\bar{\lambda }} ,m\right)}^{2}
\left(\bar{\mathcal{I}}\right)$ there exists such vector-function
$f\left(t\right)\in H$ that $x_{\lambda }
\left(a,F\right)=\vec{y}\left(a,l_{\lambda } ,m,f\right)$,
$x_{\lambda } \left(b,F\right)=\vec{y}\left(b,l_{\lambda }
m,f\right)$. So \eqref{GrindEQ__47++_} is proved in view of
\eqref{GrindEQ__815_}.

To prove that $M(\lambda)$ is  a c.o. of equation
\eqref{GrindEQ__51_} it remains to show that
$M(\bar\lambda)=M^*(\lambda)$.

Let us consider the following operator $\tilde{M}\left(\lambda
\right)\in B\left(N^{\bot } \right)$: $$\tilde{M}\left(\lambda
\right)=M\left(\lambda \right),\, \, \tilde{M}\left(\bar{\lambda
}\right)=M^{*} \left(\lambda \right),\, \, \Im \lambda >0$$

This operator is a c.o. of equation \eqref{GrindEQ__51_} in view
of \cite{Khrab5}. This c.o. generate by Theorem \ref{th4} the
operator $R\left(\lambda \right)$ \eqref{GrindEQ__63_}.

But $R\left(\lambda \right)=R_{\lambda } ,\, \Im \lambda
>0\Rightarrow R\left(\bar{\lambda }\right)=R^{*} \left(\lambda
\right)=R_{\lambda }^{*} =R_{\bar{\lambda }} ,\, \Im \lambda
>0\Rightarrow$ $\Rightarrow\, \forall f\in H$:
\begin{multline*}
\left\| \left[X_{\bar{\lambda }} \left(t\right)\right]_{\, 1}
\left(M^{*} \left(\lambda \right)-M\left(\bar{\lambda
}\right)\right)\int _{a}^{b}X_{\lambda }^{*}
\left(s\right)W\left(s,l_{\lambda } ,m\right)F\left(s,l_{\lambda }
,m\right)ds \right\|_m =0\\
\Rightarrow \forall h\in N^{\bot } :\, \, \Delta _{\bar{\lambda }}
\left(a,b\right)\left(M\left(\bar{\lambda }\right)-M^{*}
\left(\lambda \right)\right)h=0\Rightarrow M\left(\bar{\lambda
}\right)=M^{*} \left(\lambda \right).
\end{multline*}

Theorem \ref{th6} is proved.
\end{proof}

For generalized resolvents of differential operators a
representation of \eqref{GrindEQ__63_} type was obtained in
\cite{Shtraus1} for the scalar case and in \cite{Bruk1} for the
case of operator coefficients. For generalized resolvents for
\eqref{GrindEQ__1_}, \eqref{GrindEQ__2_} with $s=0$,
$n_\lambda[y]\equiv 0$ the representation of such a type was
obtained in \cite{Bruk2,Bruk3,Khrab3}.

Let $\mathcal{I}_{k} ,k=1,2$ be finite intervals, $\mathcal{I}_{1}
\subset \mathcal{I}_{2} $. Then, in spite of the fact that
$f\left(t\right)\in C^{s} \left(\bar{\mathcal{I}}_{2}
,\mathcal{H}\right)$ but $\chi _{\mathcal{I}_{1} }
f\left(t\right)\notin C^{s} \left(\bar{\mathcal{I}}_{2}
,\mathcal{H}\right)$, where $\chi _{\mathcal{I}_{1} } $ is the
characteristic function of $\mathcal{I}_{1} $, one has.

\begin{corollary}\label{cor2}
Let $0\in \mathcal{I}_{1} $ and the condition \eqref{GrindEQ__55_}
with $\mathcal{I}=\mathcal{I}_{2} $ holds. Let $R_{\lambda } $ be
the generalized resolvent of the relation $\mathcal{L}_{0} $ in
$L_{m}^{2} \left(\mathcal{I}\right)$ with
$\mathcal{I}=\mathcal{I}_{2} $. Then by Theorems \ref{th4},
\ref{th6} there exists c.o. $M\left(\lambda \right)$ of equation
\eqref{GrindEQ__5_} such that $R_{\lambda } f=y_{1}
\left(t,\lambda ,f\right)$ \eqref{GrindEQ__62_}, $t\in
\mathcal{I}=\mathcal{I}_{2} $, $f\in
H\left(=H\left(\mathcal{I}_{2} \right)\right)$. Let us define the
operator $y_{1}^{1} \left(t,\lambda ,f\right)=R_{\lambda }^{1}
f,t\in \mathcal{I}=\mathcal{I}_{1} $, $f\in
H\left(=H\left(\mathcal{I}_{1} \right)\right)$ by the same formula
\eqref{GrindEQ__62_} as operator $R_{\lambda } f$, but with
$\mathcal{I}=\mathcal{I}_{1} $ instead of
$\mathcal{I}=\mathcal{I}_{2} $. Then this operator is (after
closing) the generalized resolvent of the relation
$\mathcal{L}_{0} $ in $L_{m}^{2} \left(\mathcal{I}\right)$ with
$\mathcal{I}=\mathcal{I}_{1} $.
\end{corollary}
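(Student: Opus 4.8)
The plan is to reduce everything to the single assertion that the \emph{same} characteristic operator serves both intervals. By the converse (second) part of Theorem \ref{th6}, applied on the finite interval $\mathcal{I}_2$ for which \eqref{GrindEQ__55_} holds, the given generalized resolvent $R_\lambda$ has the form $R_\lambda f=y_1(t,\lambda,f)$ \eqref{GrindEQ__62_}, \eqref{GrindEQ__63_} for some characteristic operator $M(\lambda)$ of equation \eqref{GrindEQ__5_} (equivalently, of the canonical system \eqref{GrindEQ__51_}); this is exactly what the statement records. It therefore suffices to show that this \emph{same} $M(\lambda)$ is a characteristic operator of \eqref{GrindEQ__5_} on the smaller interval $\mathcal{I}_1$. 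Once this is established, Theorem \ref{th4}---which for a finite interval requires nothing beyond the c.o.\ property---guarantees that the operator $R^1_\lambda$ built from $M(\lambda)$ by \eqref{GrindEQ__62_}, \eqref{GrindEQ__63_} enjoys \eqref{GrindEQ__64_}--\eqref{GrindEQ__66_}, and the direct (first) part of Theorem \ref{th6}, again on the finite interval $\mathcal{I}_1$, identifies its closure with the generalized resolvent of $\mathcal{L}_0$ in $L_m^2(\mathcal{I}_1)$.

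To verify the characteristic operator property on $\mathcal{I}_1=(a_1,b_1)$ I would work with the system \eqref{GrindEQ__51_} and an arbitrary $F$ in the appropriate $L^2$ with compact support $[\alpha,\beta]\subset\mathcal{I}_1$. The first, purely bookkeeping, observation is that the solution \eqref{GrindEQ__47_} does not depend on the ambient interval: the operator solution $X_\lambda$ is normalized by $X_\lambda(0)=I$ at the common base point $0\in\mathcal{I}_1\subseteq\mathcal{I}_2$, while $G$, $W$ and $M(\lambda)$ are intrinsic, so, since the integrand in \eqref{GrindEQ__47_} vanishes off $[\alpha,\beta]$, the function $x_\lambda(t,F)$ computed over $\mathcal{I}_1$ coincides with the one computed over $\mathcal{I}_2$ for every $t\in\mathcal{I}_1$.

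The crux is the behaviour of $U[x_\lambda(t,F)]=(\Re Q(t)\,x_\lambda(t,F),\,x_\lambda(t,F))$ outside $[\alpha,\beta]$. For $t>\beta$ and for $t<\alpha$ the right-hand side $W_\lambda F$ of \eqref{GrindEQ__51_} vanishes, so $x_\lambda(\cdot,F)$ solves the homogeneous equation there and equals $X_\lambda(t)c_\pm$ with constant vectors $c_\pm=\{M(\lambda)\mp\frac{1}{2}(iG)^{-1}\}\int X^*_{\bar\lambda}W_\lambda F\,ds$. By \eqref{GrindEQ__47+_} one has $X^*_{\bar\lambda}(t)[\Re Q(t)]X_\lambda(t)=G$, whence $U[X_\lambda(t)c]=(Gc,c)$ is independent of $t$. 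Consequently $U[x_\lambda(t,F)]$ is constant on $[\beta,b_2]$ and on $[a_2,\alpha]$; since $a_2\le a_1<\alpha$ and $\beta<b_1\le b_2$, continuity of $x_\lambda(\cdot,F)$ gives $U[x_\lambda(b_1,F)]=U[x_\lambda(b_2,F)]$ and $U[x_\lambda(a_1,F)]=U[x_\lambda(a_2,F)]$. Hence the boundary form for $\mathcal{I}_1$ equals that for $\mathcal{I}_2$:
\[
(\Im\lambda)\big(U[x_\lambda(b_1,F)]-U[x_\lambda(a_1,F)]\big)=(\Im\lambda)\big(U[x_\lambda(b_2,F)]-U[x_\lambda(a_2,F)]\big)\le 0,
\]
the inequality being precisely property \eqref{GrindEQ__47++_} of the c.o.\ $M(\lambda)$ on $\mathcal{I}_2$. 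This is \eqref{GrindEQ__47++_} on $\mathcal{I}_1$, so $M(\lambda)$ is a characteristic operator there too, and the conclusion follows as in the first paragraph.

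I expect the only genuine point, as opposed to routine checking, to be the observation that $U$ is constant along homogeneous solutions by virtue of \eqref{GrindEQ__47+_}: this is what collapses the endpoint contributions at $a_1,b_1$ onto those at $a_2,b_2$ and lets the c.o.\ inequality transfer verbatim. Everything else---the interval-independence of $X_\lambda$ and of the integral \eqref{GrindEQ__47_}, and the final appeals to Theorems \ref{th4} and \ref{th6}---is mechanical, with the mild caveat that for the direct part of Theorem \ref{th6} on $\mathcal{I}_1$ no separate instance of \eqref{GrindEQ__55_} is needed (it was used only to invoke the converse direction on $\mathcal{I}_2$).
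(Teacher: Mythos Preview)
Your overall strategy is sound and is essentially the intended one: obtain $M(\lambda)$ from the converse part of Theorem~\ref{th6} on $\mathcal{I}_2$, check that the same $M(\lambda)$ is a c.o.\ on the subinterval $\mathcal{I}_1$, and then invoke Theorems~\ref{th4} and~\ref{th6} (direct part) on $\mathcal{I}_1$. The bookkeeping about interval--independence of $X_\lambda$, $G$, and the integral \eqref{GrindEQ__47_} is correct.

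However, the step you single out as ``the only genuine point'' is wrong as written. From \eqref{GrindEQ__47+_} you get $X_{\bar\lambda}^{*}(t)[\Re Q(t)]X_\lambda(t)=G$, but $U[X_\lambda(t)c]=(X_\lambda^{*}(t)[\Re Q(t)]X_\lambda(t)c,c)$ involves $X_\lambda^{*}$, not $X_{\bar\lambda}^{*}$. In general $X_\lambda^{*}(t)[\Re Q(t)]X_\lambda(t)\neq G$; see \eqref{plus2}, which records precisely that this quantity differs from $G$ by a sign--definite term. So $U[x_\lambda(t,F)]$ is \emph{not} constant beyond the support of $F$, and your equality $U[x_\lambda(b_1,F)]=U[x_\lambda(b_2,F)]$ is unjustified.

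The fix is to replace ``constant'' by the correct monotonicity. For $t>\beta$ (resp.\ $t<\alpha$) the solution $x_\lambda(\cdot,F)$ is homogeneous, and the Lagrange identity underlying \eqref{GrindEQ__60_} with $F\equiv 0$ on $[\beta,\beta']$ gives
\[
\tfrac{1}{\Im\lambda}\big(U[x_\lambda(\beta',F)]-U[x_\lambda(\beta,F)]\big)=2\int_\beta^{\beta'}\big(W_\lambda x_\lambda,x_\lambda\big)\,dt\ge 0,
\]
so $(\Im\lambda)\,U[x_\lambda(\cdot,F)]$ is nondecreasing to the right of $\beta$ and, by the same argument, nonincreasing to the left of $\alpha$. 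Hence
\[
(\Im\lambda)\big(U[x_\lambda(b_1,F)]-U[x_\lambda(a_1,F)]\big)\le(\Im\lambda)\big(U[x_\lambda(b_2,F)]-U[x_\lambda(a_2,F)]\big)\le 0,
\]
which is \eqref{GrindEQ__47++_} on $\mathcal{I}_1$. With this correction your argument goes through.
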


It is known \cite{DSnoo1} that \eqref{GrindEQ__64_} -
\eqref{GrindEQ__66_} implies \eqref{GrindEQ__3_}, where
$E_{\mu}\in B\left(L_{m}^{2} \left(\mathcal{I}\right)\right)$,
$E_{\mu }=E_{\mu -0}$,
\begin{equation} \label{GrindEQ__83_}
0\le E_{\mu _{1} } \le E_{\mu _{2} } \le \mathbf{I} ,\, \, \, \mu
_{1} <\mu _{2} ;\, \, \, E_{-\infty } =0.
\end{equation}
Here $\mathbf{I}$ is the identity operator in $L_{m}^{2}
\left(\mathcal{I}\right)$. We denote $E_{\alpha \beta }
=\frac{1}{2} \left[E_{\beta +0} +E_{\beta } -E_{\alpha +0}
-E_{\alpha } \right]$.

\begin{theorem}\label{th7}
Let $M\left(\lambda \right)$ be the characteristic operator of
equation \eqref{GrindEQ__5_} (and therefore by
\cite[p.162]{Khrab5} $\Im {P} M\left(\lambda \right){P} /\Im
\lambda \ge 0$ as $\Im \lambda \ne 0$) and $\sigma \left(\mu
\right)=w-\mathop{\lim }\limits_{\varepsilon \downarrow 0}
\frac{1}{\pi } \int _{0}^{\mu }\Im  { P} M\left(\mu +i\varepsilon
\right) {P} d\mu $ be the spectral operator-function that
corresponds to ${P} M\left(\lambda \right){P} $.

Let for equation \eqref{GrindEQ__5_} the condition
\eqref{GrindEQ__55_} with $P=I_r$ hold if $\mathcal{I}$ is
infinite. Let $E_{\mu } $ be generalized spectral family
\eqref{GrindEQ__83_} corresponding by \eqref{GrindEQ__3_} to the
resolvent $R\left(\lambda \right)$ from Theorem \ref{th4} which is
constructed with the help of c.o. $M\left(\lambda \right)$. Then
for any $\left[\alpha ,\beta \right]\subset \mathcal{B}$ the
equalities
\begin{equation} \label{GrindEQ__84_}
\begin{matrix}
{\mathop{P}\limits^{\circ } E_{\alpha ,\beta } f\left(t\right)=
\mathop{P}\limits^{\circ } \int _{\alpha }^{\beta }\left[X_{\mu }
\left(t\right)\right]_{1}  d\sigma \left(\mu \right)\varphi
\left(\mu ,\, f\right),\text{ if }f\left(t\right)\in
\mathop{H}\limits^{\circ },\ \mathcal{I}\text{ is infinite},}
\\ {E_{\alpha ,\beta } f\left(t\right)=\int _{\alpha }^{\beta }\left[X_{\mu } \left(t\right)\right]_{1} d\sigma \left(\mu \right)\varphi \left(\mu ,f\right) ,\, \, if\, \, f\left(t\right)\in H,\, \mathcal{I}\text{ is finite}} \end{matrix}
\end{equation}
are valid in $L_{m}^{2} \left(\mathcal{I}\right)$, where
$\left[X_{\lambda } \left(t\right)\right]_{1} \in
B\left(\mathcal{H}^{r} ,\, \mathcal{H}\right)$ is the first row of
the operator solution $X_{\lambda } \left(t\right)$ of homogeneous
equation \eqref{GrindEQ__54_} with coefficients
\eqref{GrindEQ__6_}, \eqref{GrindEQ__7_}, \eqref{GrindEQ__14_},
\eqref{GrindEQ__15_} that is written in the matrix form and such
that $X_{\lambda } \left(0\right)=I_{r} $,
\begin{equation} \label{GrindEQ__85_}
\varphi \left(\mu ,f\right)=\left\{\begin{array}{l} {\int
_{\mathcal{I}}\left(\left[X_{\mu } \left(t\right)\right]_{1}
\right)^{*} m\left[f\right]dt\text{ if }f\left(t\right)\in
\mathop{H}\limits^{\circ } } \\ {\int
_{\mathcal{I}}\left(\left[X_{\mu } \left(t\right)\right]_{1}
\right)^{*} W\left(t,l_{\mu } ,m\right)F\left(t,l_{\mu }
,m\right)dt,\text{ if }f(t)\in \overset{\circ}H\text{ or
}f\left(t\right)\in H,\, \mathcal{I}\text{ is finite}}
\end{array}\right.,
\end{equation}
$\mu \in \left[\alpha ,\beta \right]$.

Moreover, if vector-function $f\left(t\right)$ satisfy the
following conditions
\begin{gather}\label{star3}
\begin{matrix}
\mathop{{ P} }\limits^{\circ } E_{\infty } f=f,\, \, \mathop{{ P}
}\limits^{\circ } \int _{\mathbb{R}^{1}\setminus
\mathcal{B}}dE_{\mu } f =0\text{ if } f\in
\mathop{H}\limits^{{}^\circ },\ \mathcal{I}\text{ is infinite}
\\E_{\infty }f=f,\
\int_{\mathbb{R}^1\setminus \mathcal{B}}dE_{\mu } f =0\text{ if
}f\in H,\, \, \mathcal{I}\text{ is finite }
\end{matrix}
\end{gather}
then the inverse formulae in $L_{m}^{2} \left(\mathcal{I}\right)$
\begin{equation} \label{GrindEQ__87_}
\begin{matrix} {f\left(t\right)=\mathop{P}\limits^{\circ }
\int_{\mathcal{B}}\left[X_{\mu } \left(t\right)\right]_{1} d\sigma
\left(\mu \right)\varphi \left(\mu
,f\right)\text{ if }f\left(t\right)\in \mathop{H}\limits^{\circ } },\mathcal{I}\text{ is infinite}, \\
{f\left(t\right)=\int _{\mathcal{B}}^{}\left[X_{\mu }
\left(t\right)\right]_{1}  d\sigma \left(\mu \right)\varphi
\left(\mu ,f\right),\, \, if\, f\left(t\right)\in H,\mathcal{I}\,
\, is\, finite} \end{matrix}
\end{equation}
and Parcevel's equality
\begin{equation} \label{GrindEQ__88_}
m\left[f,g\right]=\int _{\mathcal{B}}\left(d\sigma \left(\mu
\right)\varphi \left(\mu ,f\right),\varphi \left(\mu
,g\right)\right) ,
\end{equation}
are valid, where $g\left(t\right)\in \mathop{H}\limits^{\circ } $
if $\mathcal{I}$ is infinite or $g\left(t\right)\in H$, if
$\mathcal{I}$ is finite.

In general case for $f\left(t\right),\, g\left(t\right)\in
\mathop{H}\limits^{\circ }$ if $\mathcal{I}$ is infinite or
$f\left(t\right),g\left(t\right)\in {H}$ if $\mathcal{I}$ is
finite, the inequality of Bessel type
\begin{equation} \label{GrindEQ__89_}
m\left[f\left(t\right),g\left(t\right)\right]\le \int
_{\mathcal{B}}\left(d\sigma \left(\mu \right)\varphi \left(\mu
,f\right),\, \varphi \left(\mu ,g\right)\right)
\end{equation}
is valid.
\end{theorem}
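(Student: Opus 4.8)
The plan is to recover the generalized spectral family $E_\mu$ from the resolvent $R(\lambda)$ by the Stieltjes inversion formula and then to match the outcome against the explicit integral kernel of $R(\lambda)$ supplied by \eqref{GrindEQ__62_}--\eqref{GrindEQ__63_}. Since $R(\lambda)=\int_{\mathbb R^1}dE_\mu/(\mu-\lambda)$ with $R^*(\lambda)=R(\bar\lambda)$ and $0\le E_\infty\le\mathbf I$ (this is precisely what \eqref{GrindEQ__64_}--\eqref{GrindEQ__66_} together with \cite{DSnoo1} provide), the weak inversion formula gives, for $f,g$ in the relevant dense lineal,
\[\left(E_{\alpha,\beta}f,g\right)_m=\lim_{\varepsilon\downarrow 0}\frac1\pi\int_\alpha^\beta \Im\left(R(\mu+i\varepsilon)f,g\right)_m\,d\mu,\]
so the whole theorem reduces to computing $\Im(R(\mu+i\varepsilon)f,g)_m$ from the kernel and controlling the limit on intervals $[\alpha,\beta]\subset\mathcal B$.

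First I would insert the kernel $[X_\lambda(t)]_1\{M(\lambda)-\frac12\mathrm{sgn}(s-t)(iG)^{-1}\}X^*_{\bar\lambda}(s)W(s,l_{\bar\lambda},m)$ into $(R(\lambda)f,g)_m$, writing the latter as a double integral over $\mathcal I\times\mathcal I$ of this kernel tested against the data $F(\cdot,l_{\bar\lambda},m)$ of $f$ and the analogous data of $g$ (for $f\in\overset{\circ}{H}$ the identity \eqref{GrindEQ__29_} lets one replace $WF$ by $m[f]$, which accounts for the two forms of $\varphi$ in \eqref{GrindEQ__85_}). Taking the imaginary part and using $M(\bar\lambda)=M^*(\lambda)$ together with the $G$-unitarity relation \eqref{GrindEQ__47+_}, namely $X^*_{\bar\lambda}\Re Q\,X_\lambda=G$, isolates the term carrying $\Im PM(\lambda)P$ and collapses the double integral into $(\Im PM(\mu+i\varepsilon)P\,\varphi_\varepsilon(f),\varphi_\varepsilon(g))$, where $\varphi_\varepsilon(\cdot)\to\varphi(\mu,\cdot)$ from \eqref{GrindEQ__85_} thanks to the analyticity of $X_\lambda(t)$ in $\lambda$ and its local uniform continuity in $t$ established in the proof of Theorem \ref{th6}. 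The anti-self-adjoint $\mathrm{sgn}(s-t)(iG)^{-1}$ part is the free piece of the kernel; because $X_{\mu+i\varepsilon}(t)\to X_\mu(t)$ with no spectral jump on the interior of $\mathcal B$, its contribution stays real in the limit and drops under $\Im$. Invoking the definition $\sigma(\mu)=w\text{-}\lim_{\varepsilon\downarrow0}\frac1\pi\int_0^\mu\Im PM(\mu+i\varepsilon)P\,d\mu$ and the strong bounds of Lemma \ref{lm6} to justify the exchange of limit and integration then yields the key quadratic identity
\[\left(E_{\alpha,\beta}f,g\right)_m=\int_\alpha^\beta\left(d\sigma(\mu)\,\varphi(\mu,f),\varphi(\mu,g)\right),\qquad[\alpha,\beta]\subset\mathcal B,\]
from which the operator equalities \eqref{GrindEQ__84_} are read off, with $\overset{\circ}{P}$ inserted in the infinite/$\overset{\circ}{H}$ case.

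The remaining assertions are consequences of this identity. The inverse formulae \eqref{GrindEQ__87_} follow by letting $[\alpha,\beta]\uparrow\mathcal B$ and using the hypotheses \eqref{star3}, which force $E_\infty f=f$ and annihilate the part of the spectrum outside $\mathcal B$, so that $f=E_{\mathcal B}f=\int_{\mathcal B}[X_\mu(t)]_1\,d\sigma(\mu)\varphi(\mu,f)$. Polarizing and passing to the full line gives Parseval's equality \eqref{GrindEQ__88_}, $m[f,g]=(E_\infty f,g)_m=\int_{\mathcal B}(d\sigma\,\varphi(\mu,f),\varphi(\mu,g))$. The Bessel-type inequality \eqref{GrindEQ__89_} is then obtained for general $f,g$ (without \eqref{star3}) by restricting the full-line quadratic identity to $\mathcal B$ and invoking the monotonicity and bounds \eqref{GrindEQ__83_}; convergence of all the integrals is again controlled by Lemma \ref{lm6} and \eqref{GrindEQ__66_}.

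I expect the principal difficulty to be the rigorous passage to the limit $\varepsilon\downarrow0$ in the operator-valued setting: one must show that $\frac1\pi\int_\alpha^\beta\Im PM(\mu+i\varepsilon)P\,d\mu$ converges, in the weak operator topology, to $\sigma(\beta)-\sigma(\alpha)$ uniformly enough to commute with the spatial integrals defining $\varphi_\varepsilon(f)$, and --- the more delicate point --- that the free $\mathrm{sgn}(s-t)(iG)^{-1}$ term genuinely contributes nothing to the spectral density on the interior of $\mathcal B$. This last step is where the analyticity of $X_\lambda$ in $\lambda$ on $\mathcal B$ and the relation \eqref{GrindEQ__47+_} are essential, since together they guarantee that the only boundary values with nonzero imaginary part on $\mathcal B$ come from $M(\lambda)$ itself.
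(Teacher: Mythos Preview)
Your plan is essentially the paper's: apply the Stieltjes inversion formula to $(E_{\alpha,\beta}f,g)_m$, insert the kernel from \eqref{GrindEQ__62_}, and reduce to the spectral measure of $PM(\lambda)P$. Two points, however, are handled differently in the paper and deserve attention.

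First, your sentence ``writing $(R(\lambda)f,g)_m$ as a double integral of the kernel tested against $F$ and $G$'' tacitly assumes $m[R(\lambda)f,g]=(\vec y,G)_{L^2_W}$. By Theorem~\ref{th2}\,2b (formula \eqref{GrindEQ__34_}) this is \emph{false} when $r=s=2n$: there is a correction $\bigl(p_n^{-1}(t,\lambda)f^{[n]}(t\mid m),g^{[n]}(t\mid m)\bigr)$, and after subtracting the $\lambda$ and $\bar\lambda$ contributions it survives as $2i\int_{\mathcal I}\bigl((\Im p_n^{-1}(t,\lambda))f^{[n]},g^{[n]}\bigr)dt$. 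The paper carries this term explicitly; it dies in the limit $\varepsilon\downarrow0$ because $p_n(t,\mu)$ is self-adjoint for real $\mu$, but you must exhibit it or restrict to $s<r$ where \eqref{GrindEQ__31_} has no correction.

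Second, for the sgn-term and the actual passage to $\sigma$, the paper does not argue from \eqref{GrindEQ__47+_} and analyticity as you suggest. Instead it rewrites the difference $(R(\lambda)f,g)_m-(R(\bar\lambda)f,g)_m$ so that only the $M(\lambda)$ and $M^*(\lambda)$ pieces remain (the sgn-parts for $\lambda$ and $\bar\lambda$ pair off), and then invokes a ready-made ``generalized Stieltjes inversion formula'' (Shtraus \cite[p.~803]{Shtraus1}, Bruk \cite[p.~952]{Bruk1}) that handles Nevanlinna functions tested against $\lambda$-dependent vectors $\varphi_\lambda$. Your direct analyticity argument can be made to work, but the cited lemma is what absorbs the limit-exchange difficulty you flag at the end; citing it is both shorter and safer than the $G$-unitarity route you propose. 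Once the key identity \eqref{GrindEQ__92_} is in hand, your derivation of \eqref{GrindEQ__84_}, \eqref{GrindEQ__87_}--\eqref{GrindEQ__89_} matches the paper's.
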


Let us notice that $\mathcal{B}=\cup_k (a_k,b_k)$, $(a_j,b_j)\cap
(a_k,b_k)=\varnothing$, $k\not= j$ since $\mathcal{B}$ is an open
set. In \eqref{GrindEQ__87_}
$\mathop{P}\limits^{\circ}\int_\mathcal{B}=\sum_k\lim\limits_{\alpha_k\downarrow
a_k,\beta_k\uparrow
b_k}\mathop{P}\limits^{\circ}\int_{\alpha_k}^{\beta_k}$.  In
\eqref{GrindEQ__87_}-\eqref{GrindEQ__89_} we understand
$\int_\mathcal{B}$ similarly.

\begin{proof}
Let for definiteness $r=s=2n$, $\mathcal{I}$ is infinite (for
another cases the proof becomes simpler). Let the vector-functions
$f\left(t\right),\, g\left(t\right)\in \mathop{H}\limits^{\circ }
,\, \lambda =\mu +i\varepsilon ,G_{\lambda } \left(t,l_{\lambda }
,m\right)$ be defined by  \eqref{GrindEQ__23_} with
$g\left(t\right)$ instead of $f\left(t\right)$. In view of the
Stieltjes inversion formula, we have
\begin{multline} \label{GrindEQ__90_}
\left(E_{\alpha ,\beta } f,g\right)_{L_{m}^{2}
\left(\mathcal{I}\right)} =\mathop{\lim }\limits_{\varepsilon
\downarrow 0} \frac{1}{2\pi i} \int _{\alpha }^{\beta
}\left(\left[y_{1} \left(t,\lambda ,f\right)-y_{1}
\left(t,\bar{\lambda },f\right)\right],g\right)_{m} d\mu  =\\
=\mathop{\lim }\limits_{\varepsilon \downarrow 0} \frac{1}{2\pi i}
\int_{\alpha }^{\beta }\bigg[\left(\vec{y}_{1} \left(t,l_{\lambda
} ,m,f\right),G\left(t,l_{\lambda }
,m\right)\right)_{L_{W\left(t,l_{\lambda } ,m\right)}^{2}
\left(\mathcal{I}\right)} -\left(\vec{y}_{1}
\left(t,l_{\bar{\lambda }} ,m,f\right),G\left(t,l_{\bar{\lambda }}
,m\right)\right)_{L_{W\left(t,l_{\bar{\lambda }} ,m\right)}^{2}
\left(\mathcal{I}\right)} +\\
+2i\int _{\mathcal{I}}\left(\left(\Im  {p}_{n}^{ -1}
\left(t,\lambda \right)\right)f^{\left[n\right]}
\left(t\left|m\right. \right),g^{\left[n\right]}
\left(t\left|m\right. \right)\right)dt\bigg]d\mu  =\\
=\mathop{\lim }\limits_{\varepsilon \downarrow 0} \frac{1}{2\pi i}
\int _{\alpha }^{\beta }\left[\left( M\left(\lambda \right) \int
_{\mathcal{I}}X_{\bar{\lambda }}^{*}
\left(t\right)W\left(t,l_{\bar{\lambda }}
,m\right)F\left(t,l_{\bar{\lambda }} ,m\right)dt, \, \int
_{\mathcal{I}}X_{\lambda }^{*} \left(t\right)W\left(t,l_{\lambda }
,m\right)G\left(t,l_{\lambda } ,m\right)dt \right)\right.  -
\\
\left. -\left( M^{*} \left(\lambda \right) \int
_{\mathcal{I}}X_{\lambda }^{*} \left(t\right)W\left(t,l_{\lambda }
,m\right)F\left(t,l_{\lambda } ,m\right)dt ,\, \int
_{\mathcal{I}}X_{\bar{\lambda }}^{*}
\left(t\right)W\left(t,l_{\bar{\lambda} }
,m\right)G\left(t,l_{\bar{\lambda }} ,m\right)dt
\right)\right]d\mu=\\
=\int _{\alpha }^{\beta }\left(d\sigma \left(\mu \right)\int
_{\mathcal{I}}X_{\mu }^{*} \left(t\right)W\left(t,l_{\mu }
\right)F\left(t,l_{\mu } ,m\right)dt ,\, \int _{\mathcal{I}}X_{\mu
}^{*} \left(t\right)W\left(t,l_{\mu } \right)G_{\mu }
\left(t,l_{\mu } ,m\right)dt \right),
\end{multline}
where the second equality is a corollary of \eqref{GrindEQ__34_},
the next to last is a corollary of \eqref{GrindEQ__62_} and the
last one follows from the well-known generalization of the
Stieltjes inversion formula \cite[Proposition (Б), p.
803]{Shtraus1}, \cite[Lemma, p. 952]{Bruk1}. (In the case of
finite $\mathcal{I}$ we have to substitute in \eqref{GrindEQ__90_}
$M(\lambda)$ by $P M(\lambda) P$ and then when passing to the next
to the last equality in \eqref{GrindEQ__90_} we have to use the
remark after the proof of Lemma \ref{lm6}.) But for $\lambda \in
\mathcal{B}$
\begin{equation} \label{GrindEQ__91_}
\int_{\mathcal{I}}X_{\bar{\lambda }}^{*}
\left(t\right)W\left(t,l_{\bar{\lambda }}
,m\right)F\left(t,l_{\bar{\lambda }} \right)dt =\int
_{\mathcal{I}}\left(\left[X_{\lambda } \left(t\right)\right]_{1}
\right)^{*} m\left[f\right]dt ,
\end{equation}
because, in view of \eqref{GrindEQ__29_},
\begin{multline*}
\forall h\in \mathcal{H}^{r} :(\int _{\mathcal{I}}X_{\bar{\lambda
}}^{*} \left(t\right)W\left(t,l_{\bar{\lambda }}
,m\right)F\left(t,l_{\bar{\lambda }} \right)dt,h )=\\
=\int _{\mathcal{I}}\left(W\left(t,l_{\bar{\lambda }}
,m\right)F\left(t,l_{\bar{\lambda }} \right),X_{\bar{\lambda }}
\left(t\right)h\right)dt =(\int
_{\mathcal{I}}\left(\left[X_{\bar{\lambda }} \right]_{1}
\right)^{*} m\left[f\right],h)dt.
\end{multline*}
Due to \eqref{GrindEQ__90_}, \eqref{GrindEQ__91_},
\eqref{GrindEQ__85_}

\begin{equation} \label{GrindEQ__92_}
\left(E_{\alpha ,\beta } f,g\right)_{L_{m}^{2}
\left(\mathcal{I}\right)} =\int _{\alpha }^{\beta }\left(d\sigma
\left(\mu \right)\varphi \left(\mu ,f\right),\varphi \left(\mu
,g\right)\right) .
\end{equation}

The equality \eqref{GrindEQ__88_} and inequality
\eqref{GrindEQ__89_} are the corollaries of \eqref{GrindEQ__92_}.

Representing $\varphi \left(\mu ,g\right)$ in \eqref{GrindEQ__92_}
by the second variant of \eqref{GrindEQ__85_}, changing in
\eqref{GrindEQ__92_} the order of integration and replacing
$\int_{\alpha}^\beta$ by integral sum and using
\eqref{GrindEQ__29_} we obtain that
\begin{multline}
\label{GrindEQ__93_}\left(E_{\alpha ,\beta } f,g\right)_{L_{m}^{2}
\left(\mathcal{I}\right)} =(\int _{\alpha }^{\beta }\left[X_{\mu }
\left(t\right)\right]_{1} d\sigma \left(\mu \right)\varphi
\left(\mu ,f\right),g\left(t\right) )_{L_{m}^{2}
\left(\mathcal{I}\right)} =\\
=\int _{\alpha }^{\beta }\left[X_{\mu } \left(t\right)\right]_{1}
d\sigma \left(\mu \right)\varphi \left(\mu
,f\right),g\left(t\right) )_{L_{m}^{2} \left(\mathcal{I}\right)}
\end{multline}
and \eqref{GrindEQ__84_} is proved since
$g(t)\in\overset{\circ}{H}$. Equalities \eqref{GrindEQ__87_} are
the corollary of \eqref{GrindEQ__84_}, \eqref{star3}. Theorem
\ref{th7} is proved.
\end{proof}

Let us notice that if
$L_m^2(\mathcal{I})=\overset{\circ}L_m^2(\mathcal{I})$ then
Theorem \ref{th7} is valid without condition \eqref{GrindEQ__55_}
with $P=I_r$ if $\mathcal{I}$ is infinite.

It is known (see for example \cite{Khrab1} or Example \ref{ex2})
that just in the case $n_{\lambda } \left[y\right]\equiv 0$ in
\eqref{GrindEQ__1_}, \eqref{GrindEQ__2_} there is such $E_{\mu} $
satisfying \eqref{GrindEQ__3_}, \eqref{GrindEQ__62_} --
\eqref{GrindEQ__66_}, \eqref{GrindEQ__83_} that ${E} _{\infty }
\ne \mathbf{I}$.

On the other hand if $n_\lambda[y]\equiv 0$ then $\forall f\in
D\left(\mathcal{L}_{0} \right){E} _{\infty } f=f$ in view of
\cite{Khrab1}, \cite{Khrab3}.

Let expression $n_{\lambda } $ in representation
\eqref{GrindEQ__2_}, \eqref{GrindEQ__561_} have a divergent form
with coefficients $\tilde{\tilde{p}}_{j} =\tilde{\tilde{p}}_{j}
\left(t,\lambda \right),\tilde{\tilde{q}}_{j}
=\tilde{\tilde{q}}_{j} \left(t,\lambda
\right),\tilde{\tilde{s}}_{j} =\tilde{\tilde{s}}_{j}
\left(t,\lambda \right)$.

We denote $m\left(t\right)$ three-diagonal $\left(n+1\right)\times
\left(n+1\right)$ operator matrix, whose elements under main
diagonal are equal to $\left(-\frac{i}{2} \tilde{q}_{1} ,\, \ldots
,\, -\frac{i}{2} \tilde{q}_{n} \right)$, the elements over the
main diagonal are equal to $\left(\frac{i}{2} \tilde{s}_{1} ,\,
\ldots ,\, \frac{i}{2} \tilde{s}_{n} \right)$, the elements on the
main diagonal are equal to $\left(\tilde{p}_{0} ,\, \ldots ,\,
\tilde{p}_{n} \right)$, where $\tilde{p}_{j} ,\tilde{q}_{j}
,\tilde{s}_{j} =\tilde{q}_{j}^{*} $ are the coefficients of
expressions $m$. (Here either $2n$ or $2n+1$ is equal to the order
$r$ of $l_\lambda$). If order of $n_{\lambda } $ is less or equal
to $2n$, we denote $n\left(t,\lambda \right)$ the analogues
$\left(n+1\right)\times \left(n+1\right)$ operator matrix with
$\tilde{\tilde{p}}_{j} ,\tilde{\tilde{q}}_{j}
,\tilde{\tilde{s}}_{j} $ instead of $\tilde{p}_{j} ,\tilde{q}_{j}
,\tilde{s}_{j} $. If order $m$ or order $n_{\lambda } $ is less
than $2n$, we set the correspondent elements of $m\left(t\right)$
or $n\left(t,\lambda \right)$ be equal to zero.

\begin{theorem}\label{th8}
Let in \eqref{GrindEQ__1_}, \eqref{GrindEQ__2_} the order of the
expression $n_{\lambda } $ is less or equal to the order of the
expression $l-\lambda m$ (and therefore in view of
(\ref{GrindEQ__561_}) the order of $l-\lambda m$ is equal to $r$;
so $Q\left(t,l_{\lambda } \right)=Q\left(t,l-\lambda m\right)$).
Let $y=R_{\lambda } f,\, f\in H$ be the generalized resolvent of
the relation $\mathcal{L}_{0} $ and $y$ satisfy equation
\eqref{GrindEQ__1_}. Let $y_{1} =R\left(\lambda \right)f,\, f\in
H$ be the operator \eqref{GrindEQ__62_}, \eqref{GrindEQ__63_} from
Theorem \ref{th4}.

Let the following conditions hold for $\tau >0$ large enough:

1$^\circ$.

{\small
\begin{multline} \label{GrindEQ__94_}
\left.\mathop{\lim }\limits_{\alpha \downarrow a,\, \beta \uparrow
b} \frac{(\Re Q\left(t,l_{\lambda } \right)\left(\vec{y}_{1}
\left(t,l_{\lambda } ,m,f\right)-\vec{y}\left(t,l-\lambda
m,m,f\right)),\left(\vec{y}_{1} \left(t,l_{\lambda }
,m,f\right)\right)-\vec{y}\left(t,l-\lambda
m,m,f\right)\right)}{\Im \lambda }\right|_\alpha^\beta\le\\\le 0,\
\lambda =i\tau
\end{multline}
}

2$^\circ$.
\begin{equation} \label{GrindEQ__95_}
\Im{n}\left({\rm t,}\lambda \right)\le c\left(t,\tau
\right)m\left(t\right),\, t\in \bar{\mathcal{I}},\, \, \lambda
=i\tau ,
\end{equation}
where the scalar function $c\left(t,\tau \right)$ satisfles the
following condition:
\begin{equation} \label{GrindEQ__96_}
\mathop{\sup }\limits_{t\in \bar{\mathcal{I}}} c\left(t,\tau
\right)=o\left(\tau \right),\, \, \, \tau \to +\infty .
\end{equation}

Then for generalized spectral family ${E}_{\mu } $
\eqref{GrindEQ__83_} corresponding by \eqref{GrindEQ__3_} to the
resolvent $R\left(\lambda \right)$
\eqref{GrindEQ__62_}-\eqref{GrindEQ__63_} from Theorem \ref{th4}
and for generalized spectral family $\mathcal{E}_{\mu } $
corresponding to the generalized resolvent $R_{\lambda } $ one has
${E} _{\infty } =\mathcal{E} _{\infty } $.

\end{theorem}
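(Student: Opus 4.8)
The plan is to recover each of $E_\infty$, $\mathcal E_\infty$ as a strong limit of its resolvent along the imaginary axis and then to compare the two resolvents. From the representation \eqref{GrindEQ__3_} and the functional calculus one has, for every generalized spectral family, $(-i\tau)R(i\tau)\to E_\infty$ and $(-i\tau)R_{i\tau}\to\mathcal E_\infty$ strongly as $\tau\to+\infty$: the scalar $-i\tau/(\mu-i\tau)$ is bounded uniformly in $\mu,\tau$ and tends to $1$ pointwise, so dominated convergence applies to the spectral measure $d(E_\mu f,f)_m$. Since $E_\infty$ and $\mathcal E_\infty$ are bounded selfadjoint and $H$ is dense in $L^2_m(\mathcal I)$, it suffices to prove that $\tau\,\|(R(i\tau)-R_{i\tau})f\|_m\to0$ for every $f\in H$; writing $w=w_\tau=(R(i\tau)-R_{i\tau})f=y_1-y$, this is the statement $\tau^2\,m[w,w]\to0$.

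First I would note that $w$ solves the homogeneous equation $l_{i\tau}[w]=0$, because by hypothesis both $y_1=R(i\tau)f$ and $y=R_{i\tau}f$ are genuine solutions of \eqref{GrindEQ__1_} and hence share the right-hand side $m[f]$. Applying the Green formula of Theorem \ref{th3} with $\mathrm l_1=\mathrm l_2=l_{i\tau}$, $\mathrm m_1=\mathrm m_2=m$, $f_1=f_2=0$, $y_1=y_2=w$, and using $l_{i\tau}^{*}=l_{\overline{i\tau}}$ together with $\Im l_{i\tau}=-\tau m-\Im n_{i\tau}$, produces the energy identity
\begin{equation*}
2\tau\,m[w,w]+2(\Im n_{i\tau})[w,w]=\bigl(\Re Q(t,l_{i\tau})\vec w,\vec w\bigr)\Big|_a^b=:B_\tau ,
\end{equation*}
where $\vec w=\vec y(t,l_{i\tau},m,0)$ is built from $w$ and the boundary term is taken as the limit over $(\alpha,\beta)\uparrow\mathcal I$. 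Since $\Im n_{i\tau}\ge0$ this yields $0\le 2\tau\,m[w,w]\le B_\tau$, while condition 2$^\circ$ controls the middle term: $\Im n(t,i\tau)\le c(t,\tau)m(t)$ gives $(\Im n_{i\tau})[w,w]\le\sup_t c(t,\tau)\,m[w,w]=o(\tau)\,m[w,w]$ by \eqref{GrindEQ__96_}. Writing $\tau^2 m[w,w]=\tfrac{\tau}{2}B_\tau-\tau(\Im n_{i\tau})[w,w]$ and noting that the last summand is $o(\tau)\cdot\tau m[w,w]=o(1)\cdot\tfrac{\tau}{2}B_\tau$, the whole question reduces to proving $\tau B_\tau\to0$.

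To treat $B_\tau$ I would split the vector formed from the full expression $l_{i\tau}$ as $\vec w=\vec V+\vec D$, where $\vec V=\vec y_1(t,l_{i\tau},m,f)-\vec y(t,l-i\tau m,m,f)$ is exactly the vector occurring in condition 1$^\circ$, and $\vec D=\vec y(t,l_{i\tau},m,f)-\vec y(t,l-i\tau m,m,f)$ records the discrepancy between the quasi-derivatives of $y$ relative to $l_{i\tau}$ and to $l-i\tau m$, i.e. the contribution of $n_{i\tau}$. Because $\vec D$ has vanishing first half and $\Re Q(t,l_{i\tau})$ pairs the two halves, $\vec D$ is $\Re Q$-neutral and one computes $B_\tau=\bigl(\Re Q\,\vec V,\vec V\bigr)\big|_a^b+2\Im(\vec V_1,\vec D_2)\big|_a^b$, with $\vec V_1$ the ordinary-derivative half of $\vec V$ and $\vec D_2$ the $n_{i\tau}$-quasi-derivative half of $\vec D$. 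Condition 1$^\circ$ states precisely that $\lim_{(\alpha,\beta)\uparrow\mathcal I}\bigl(\Re Q\,\vec V,\vec V\bigr)\big|_\alpha^\beta\big/\tau\le0$, so the first summand is nonpositive; together with $B_\tau\ge0$ this squeezes $0\le B_\tau\le 2\Im(\vec V_1,\vec D_2)\big|_a^b$.

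The main obstacle is then to show that this remaining cross term is $o(1/\tau)$, so that $\tau B_\tau\to0$ (whence $\tau^2 m[w,w]\to0$ and $E_\infty=\mathcal E_\infty$). This requires an asymptotic bound, uniform as $\tau\to+\infty$, on the boundary values of the $n_{i\tau}$-contributions $\vec D_2$, obtained from condition 2$^\circ$ and the growth restriction \eqref{GrindEQ__96_}, and a justification that the limits $(\alpha,\beta)\uparrow\mathcal I$ and $\tau\to+\infty$ may be interchanged in the order used above. Everything else is an assembly of Theorems \ref{th1}--\ref{th3} and the definition of $E_\infty$ through \eqref{GrindEQ__3_}.
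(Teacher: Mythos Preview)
Your overall strategy---show $\|R(i\tau)-R_{i\tau}\|=o(1/\tau)$ and deduce $E_\infty=\mathcal E_\infty$ via the spectral representation---is exactly the paper's. The gap is in the very first step of the estimate: your claim that $w=y_1-y$ solves the homogeneous equation $l_{i\tau}[w]=0$ is false. The operator $R_\lambda$ is the generalized resolvent of $\mathcal L_0$, the relation built from $l$ and $m$ \emph{without} $n_\lambda$; hence $y=R_\lambda f$ satisfies $(l-\lambda m)[y]=m[f]$, not $l_\lambda[y]=m[f]$. Consequently $(l-\lambda m)[w]=(l-\lambda m)[y_1]-(l-\lambda m)[y]=n_\lambda[y_1]$ (equivalently $l_\lambda[w]=n_\lambda[y]$), which is a genuinely inhomogeneous equation. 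Your ``energy identity'' $2\tau\,m[w,w]+2(\Im n_{i\tau})[w,w]=B_\tau$ and the splitting $\vec w=\vec V+\vec D$ (it would have to be $\vec V-\vec D$ anyway) are both built on the wrong equation, which is why you are left with an unmanageable boundary cross term.

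The paper applies the Green formula \eqref{GrindEQ__36_} directly to $(l-\lambda m)[z]=n_\lambda[y_1]$ with $z=w$. The point is that the vector $\vec z=\vec z(t,l-\lambda m,n_\lambda,y_1)$ produced by \eqref{GrindEQ__25_} for \emph{this} equation is, by the quasi-derivative bookkeeping (Lemma~\ref{lm2}), exactly $\vec y_1(t,l_\lambda,m,f)-\vec y(t,l-\lambda m,m,f)$---the very object appearing in condition~\eqref{GrindEQ__94_}. So condition~1$^\circ$ says precisely that the boundary term is $\le 0$, and one gets
\[
m[z,z]\le -\frac{1}{\tau}\int_{\mathcal I}\Im\bigl(n_\lambda\{y_1,z\}\bigr)\,dt .
\]
Now the Cauchy-type inequality for dissipative operators together with \eqref{GrindEQ__95_}, \eqref{GrindEQ__96_} gives pointwise $\bigl|(n(t,\lambda)\,\mathrm{col}\{y_1,\dots\},\mathrm{col}\{z,\dots\})\bigr|\le c(t,\tau)\,m\{y_1,y_1\}^{1/2}m\{z,z\}^{1/2}$, whence $\|z\|_m\le \frac{\sup_t c(t,\tau)}{\tau}\,\|y_1\|_m=o(1)\,\|y_1\|_m\le o(1/\tau)\,\|f\|_m$. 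No residual cross term appears, and no asymptotics of boundary values of $\vec D_2$ are needed. The last step ($\|R_1(i\tau)-R_2(i\tau)\|=o(1/\tau)\Rightarrow E_\infty^1=E_\infty^2$) is the paper's short Lemma~\ref{lm10+}; your dominated-convergence argument for $(-i\tau)R(i\tau)\to E_\infty$ is a clean alternative for that part.
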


Let us notice that in view of \eqref{GrindEQ__95_} the coefficient
at the highest derivative in the expression $l-\lambda m$ has
inverse from $B\left(\mathcal{H}\right)$ if $t\in
\bar{\mathcal{I}}$, $\Im \lambda \ne 0$.

\begin{proof}
Let $f\left(t\right)\in H$, $y_{1} =R\left(\lambda \right)f$,
$y=R_{\lambda } f$. Then $z=y_{1} -y$ satisfies the following
equation
\begin{equation} \label{GrindEQ__97_}
l\left[z\right]-\lambda m\left[z\right]=n_{\lambda } \left[y_{1} \right].
\end{equation}

Applying to the equation \eqref{GrindEQ__97_} the Green formula
\eqref{GrindEQ__36_}, one has
$$
\left.\int_{\alpha }^{\beta }\Im \left(n_{\lambda } \left\{y_{1}
,z\right\}\right)dt +\int _{\alpha }^{\beta }m\left\{z,z\right\}dt
=\frac{1}{2} \frac{\Re \left(Q\left(t,l_{\lambda }
\right)\vec{z},\vec{z}\right)}{\Im \lambda } \right|_\alpha^\beta
,
$$
where $\vec{z}=\vec{z}\left(t,l-\lambda m,n_{\lambda } ,y_{1}
\right)=\vec{y}_1\left(t,l_{\lambda }
,m,f\right)-\vec{y}\left(t,l-\lambda m,m,f\right)$ in view of
\eqref{GrindEQ__25_} and Lemma \ref{lm2}. Hence for $\tau >0$
large enough
\begin{multline} \label{GrindEQ__98_}
m\left[z,z\right]\le -\int_{\mathcal{I}}\Im\left(n_{\lambda }
\left[y_{1} ,z\right]\right)dt/\tau
 \leq\\
 \leq{\int _{\mathcal{I}}\left|\left({ n}\left({\rm t,}\lambda
\right)col\left\{y_{1} ,y'_{1} ,\ldots ,y_{1}^{\left(n\right)}
\right\},col\left\{z,z',\ldots ,z^{\left(n\right)}
\right\}\right)\right| dt \mathord{\left/{\vphantom{\int
_{\mathcal{I}}\left|\left({\rm n}\left({\rm t,}\lambda
\right)col\left\{y_{1} ,y'_{1} ,\ldots ,y_{1}^{\left(n\right)}
\right\},col\left\{z,z',\ldots ,z^{\left(n\right)}
\right\}\right)\right| dt \tau }}\right.\kern-\nulldelimiterspace}
\tau } ,\quad \lambda =i\tau
\end{multline}
in view of \eqref{GrindEQ__94_}. But due to the inequality of the
Cauchy type for dissipative operators \cite[p. 199]{Shtraus1} and
\eqref{GrindEQ__95_}, \eqref{GrindEQ__96_}: subintegral function
in \eqref{GrindEQ__98_} is less or equal to
$\left(m\left\{z,z\right\}\right)^{1/2}
\left(m\left\{y_1,y_1\right\}\right)^{1/2} o\left(1\right)$ with
$\lambda =i\tau ,\, \tau \to +\infty $. Therefore $\left\|
z\right\|_m \le o\left({1/\tau}\right)\left\| f\right\| _m $ since
$\left\| R_{\lambda } \right\| \le {1 \mathord{\left/{\vphantom{1
\left|\Im \lambda \right|}}\right.\kern-\nulldelimiterspace}
\left|\Im \lambda \right|} $. Hence
$$\left\| R\left(\lambda
\right)-R_{\lambda } \right\|\le o\left({1
\mathord{\left/{\vphantom{1 \tau
}}\right.\kern-\nulldelimiterspace} \tau } \right),\quad \lambda
=i\tau ,\quad \tau \to +\infty$$ To complete the proof of the
theorem it remains to prove the following
\begin{lemma}\label{lm10+}
Let $R_{k} \left(\lambda \right)=\int
_{\mathbb{R}^1}\frac{dE^k_{\mu } }{\mu -\lambda }  ,\, k=1,2$,
where $E_{\mu }^{k} $ are the generalized spectral families the
type \eqref{GrindEQ__83_} in Hilbert space $\mathbf{H}$. If
$\left\| R_{1} \left(\lambda \right)-R_{2} \left(\lambda
\right)\right\| \le o\left({1 \mathord{\left/{\vphantom{1 \tau
}}\right.\kern-\nulldelimiterspace} \tau } \right),\, \, \,
\lambda =i\tau ,\, \, \tau \to +\infty $, then $E_{\infty }^{1}
=E_{\infty }^{2} $.
\end{lemma}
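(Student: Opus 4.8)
The plan is to reduce everything to the elementary spectral-theoretic identity that the weak operator limit $w\text{-}\lim_{\tau\to+\infty}(-i\tau)R(i\tau)=E_\infty$ holds for any operator admitting a representation of the form \eqref{GrindEQ__3_} with a generalized spectral family satisfying \eqref{GrindEQ__83_}. First I would compute, for the family $E^k_\mu$ and any $f\in\mathbf{H}$, the quadratic form $(-i\tau R_k(i\tau)f,f)=\int_{\mathbb{R}^1}\frac{\tau^2-i\tau\mu}{\mu^2+\tau^2}\,d(E^k_\mu f,f)$. Since $d(E^k_\mu f,f)$ is a finite positive measure of total mass $(E^k_\infty f,f)\le\|f\|^2$, and since the integrand is bounded in modulus by $\tau/\sqrt{\mu^2+\tau^2}\le 1$ and tends pointwise to $1$ as $\tau\to+\infty$, the dominated convergence theorem gives $(-i\tau R_k(i\tau)f,f)\to(E^k_\infty f,f)$. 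Polarizing, I obtain $(-i\tau R_k(i\tau)f,g)\to(E^k_\infty f,g)$ for all $f,g\in\mathbf{H}$, i.e. $-i\tau R_k(i\tau)\to E^k_\infty$ in the weak operator topology.

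With that limit in hand the norm hypothesis finishes the argument at once. Setting $A_k(\tau)=-i\tau R_k(i\tau)$, the assumption $\|R_1(i\tau)-R_2(i\tau)\|\le o(1/\tau)$ yields $\|A_1(\tau)-A_2(\tau)\|=\tau\,\|R_1(i\tau)-R_2(i\tau)\|\le\tau\cdot o(1/\tau)=o(1)\to 0$. Hence for every $f,g\in\mathbf{H}$ one has $|((A_1(\tau)-A_2(\tau))f,g)|\le\|A_1(\tau)-A_2(\tau)\|\,\|f\|\,\|g\|\to 0$, while by the first step $((A_1(\tau)-A_2(\tau))f,g)\to(E^1_\infty f,g)-(E^2_\infty f,g)$. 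Comparing these two limits gives $((E^1_\infty-E^2_\infty)f,g)=0$ for all $f,g$, that is $E^1_\infty=E^2_\infty$.

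The only genuinely substantive point is the weak-limit identity of the first step, and its verification rests solely on dominated convergence against the finite positive scalar measures $d(E^k_\mu f,f)$; the uniform bound $|\frac{\tau^2-i\tau\mu}{\mu^2+\tau^2}|\le 1$ together with the pointwise limit $1$ is what makes this routine rather than delicate. The one thing I expect to watch carefully is that the total mass of $d(E^k_\mu f,f)$ is exactly $(E^k_\infty f,f)$, which uses $E^k_{-\infty}=0$ from \eqref{GrindEQ__83_}, so that the limit is $E^k_\infty$ and not $E^k_\infty-E^k_{-\infty}$; once this normalization is noted, no further analysis of the spectral families is needed and the norm decay of the difference does the rest.
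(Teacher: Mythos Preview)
Your argument is correct. Both you and the paper exploit the identity that $-\lambda R_k(\lambda)\to E^k_\infty$ in an appropriate sense along $\lambda=i\tau$, then use the hypothesis $\tau\|R_1(i\tau)-R_2(i\tau)\|\to 0$ to identify the two limits. The difference is in packaging: you treat each $R_k$ separately, compute $(-i\tau R_k(i\tau)f,f)=\int\frac{\tau^2-i\tau\mu}{\mu^2+\tau^2}\,d(E^k_\mu f,f)$, and appeal directly to dominated convergence against the finite positive measure $d(E^k_\mu f,f)$, then polarize. The paper instead works directly with the signed measure $d\sigma(\mu)=d((E^1_\mu-E^2_\mu)f,f)$, writes
\[
([R_1(\lambda)-R_2(\lambda)]f,f)=\frac{1}{i\tau}\Bigl(-\int_\Delta d\sigma+\int_\Delta\frac{\mu\,d\sigma}{\mu-\lambda}+\int_{\mathbb{R}\setminus\Delta}\frac{\lambda\,d\sigma}{\mu-\lambda}\Bigr),
\]
and carries out an explicit $\varepsilon$--$\Delta$ tail-cutting argument (choose $\Delta$ large enough to make the tail term small, then $\tau$ large enough to kill the middle term). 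Your route via dominated convergence is the cleaner formulation of the same limiting mechanism; the paper's hands-on splitting is just DCT unpacked for a signed measure of bounded variation. Either way buys the same conclusion with the same input, and neither requires anything beyond the normalization $E^k_{-\infty}=0$ from \eqref{GrindEQ__83_} that you already flagged.
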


\begin{proof}
Let $f\in $\textbf{H} $\, \sigma \left(\mu
\right)=\left(\left(E_{\mu }^{1} -E_{\mu }^{2} \right)f,f\right)$.
One has
\begin{multline*}
\left|\left(\left[R_{1} \left(\lambda \right)-R_{2} \left(\lambda
\right)\right]f,f\right)\right|=\\=\frac{1}{\tau } \left|-\int
_{\Delta }d\sigma \left(\mu \right) +\int _{\Delta }\frac{\mu
d\sigma \left(\mu \right)}{\mu -\lambda }  +\int _{R^{1}
\backslash \Delta }\frac{\lambda d\sigma \left(\mu \right)}{\mu
-\lambda }  \right|\le o\left({1 \mathord{\left/{\vphantom{1 \tau
}}\right.\kern-\nulldelimiterspace} \tau } \right)\left\|
f\right\| ^{2} ,\, \, \lambda =i\tau ,\, \, \tau \to +\infty
\end{multline*}
Therefore
\begin{equation} \label{GrindEQ__99_}
\left|-\int _{\Delta }d\sigma \left(\mu \right)+\int _{\Delta
}\frac{\mu d\sigma \left(\mu \right)}{\mu -\lambda }  +\int
_{R^{1} \backslash \Delta }\frac{\lambda d\sigma \left(\mu
\right)}{\mu -\lambda }   \right|\le o\left(1\right),\, \, \,
\lambda =i\tau ,\, \, \tau \to +\infty .
\end{equation}
For an arbitrarily small $\varepsilon >0$ we choose such finite
interval $\Delta \left(\varepsilon \right)$ that for any finite
interval $\Delta \supseteq \Delta \left(\varepsilon
\right):\left|\int _{R^{1} \backslash \Delta }\frac{\lambda
d\sigma \left(\mu \right)}{\mu -\lambda }
\right|<\frac{\varepsilon }{2} ,\, \, \lambda =i\tau $. But for
any finite interval $\Delta \supseteq \Delta \left(\varepsilon
\right)\, \exists N=N\left(\Delta \right):\forall \tau
>N:\left|\int _{\Delta }\frac{\mu d\sigma \left(\mu \right)}{\mu
-\lambda }  \right|<\frac{\varepsilon }{2} ,\, \, \lambda =i\tau
$. Therefore $\forall \varepsilon >0,\Delta \supseteq \Delta
\left(\varepsilon \right):\left|\int _{\Delta }d\sigma \left(\mu
\right) \right|<\varepsilon $ in view of \eqref{GrindEQ__99_}.
Hence $\forall f\in $\textbf{H}$:\left(E_{\infty }^{1}
f,f\right)=\left(E_{\infty }^{2} f,f\right)\Rightarrow E_{\infty
}^{1} =E_{\infty }^{2} $.

Lemma \ref{lm10} and Theorem \ref{th8} are proved.
\end{proof}
\end{proof}

\begin{corollary}\label{cor3}
Let the conditions of Theorems \ref{th7}, \ref{th8} hold. Then for
generalized spectral family $E_{\mu } $ from Theorem \ref{th7}
$\forall f\left(t\right)\in D\left(\mathcal{L}_{0}
\right):E_{\infty } f=f$.
\end{corollary}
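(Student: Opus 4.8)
The plan is to reduce the assertion to the already-settled case $n_\lambda[y]\equiv 0$ by invoking Theorem \ref{th8}. Under the hypotheses assumed here, Theorem \ref{th8} delivers the operator identity $E_\infty=\mathcal{E}_\infty$, where $E_\mu$ is the generalized spectral family of the resolvent $R(\lambda)$ of Theorem \ref{th4} and $\mathcal{E}_\mu$ is the generalized spectral family of the generalized resolvent $R_\lambda$ of the relation $\mathcal{L}_0$. Hence it suffices to establish $\mathcal{E}_\infty f=f$ for every $f\in D(\mathcal{L}_0)$, that is, to verify the claim for the family attached to $R_\lambda$ rather than to $R(\lambda)$.

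First I would observe that $R_\lambda$ is, by its very role in Theorem \ref{th8}, a generalized resolvent of $\mathcal{L}_0$, and that $\mathcal{L}_0=\overline{\mathcal{L}'_0}$ is generated by the pair $l,m$ with $l_\lambda=l-\lambda m$, i.e.\ precisely the situation $n_\lambda[y]\equiv 0$. Consequently $\mathcal{E}_\mu$ arises in that special case: by Theorem \ref{th6} such an $R_\lambda$ coincides with an operator of the form $R(\lambda)$ built (as in Theorem \ref{th4}) from a c.o.\ of equation \eqref{GrindEQ__5_} with $n_\lambda\equiv 0$, so that Theorem \ref{th7} applies to $R_\lambda$ and produces $\mathcal{E}_\mu$. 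For this situation it was already recorded, on the strength of \cite{Khrab1,Khrab3}, that $\mathcal{E}_\infty f=f$ for all $f\in D(\mathcal{L}_0)$.

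Combining the two facts then closes the argument: for any $f\in D(\mathcal{L}_0)$ one has $E_\infty f=\mathcal{E}_\infty f=f$, the first equality by Theorem \ref{th8} and the second by the result just recalled. The only point that I expect to require genuine care is the compatibility check, namely that the hypotheses of Theorems \ref{th7} and \ref{th8} are simultaneously in force for both families and, in particular, that the known identity $\mathcal{E}_\infty f=f$ for the case $n_\lambda\equiv 0$ is applicable to the very generalized resolvent $R_\lambda$ of $\mathcal{L}_0$ that enters Theorem \ref{th8}; once this is secured, the chain of equalities is immediate and no further computation is needed.
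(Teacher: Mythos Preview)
Your proposal is correct and follows exactly the route the paper intends: the corollary is stated without explicit proof because it is the immediate combination of Theorem~\ref{th8} (giving $E_\infty=\mathcal{E}_\infty$) with the remark, recorded just before Theorem~\ref{th8} on the authority of \cite{Khrab1,Khrab3}, that in the case $n_\lambda[y]\equiv 0$ one has $\mathcal{E}_\infty f=f$ for all $f\in D(\mathcal{L}_0)$. Your invocation of Theorem~\ref{th6} is a slight over-justification but harmless.
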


\begin{remark}
\label{rem3} If $L_{m}^{2}
\left(\mathcal{I}\right)=\mathop{L_{m}^{2} }\limits^{\circ }
\left(\mathcal{I}\right)$, then it is sufficient to verify
condition \eqref{GrindEQ__94_} in Theorem \ref{th8} for $f\in
\mathop{H}\limits^{\circ } $.
\end{remark}

\begin{proposition}
\label{prop2} Let the order of expression $n_{\lambda } $ be less
or equal to the order of expression $l-\lambda m$ and the
coefficient of $l-\lambda m$ at the highest derivative has the
inverse from $B\left(\mathcal{H}\right)$ for $t\in
\bar{\mathcal{I}},\, \lambda \in \mathcal{B} \left(l-\lambda
{m}\right)$, where $\mathcal{B}\left(l-\lambda m\right)$ is an
analogue of the set $\mathcal{B} =\mathcal{B}\left(l_\lambda
\right)$. Let interval $\mathcal{I}$ be finite and for equation
\eqref{GrindEQ__1_}, \eqref{GrindEQ__2_} with $n_{\lambda }
\left[y\right]\equiv 0$ condition \eqref{GrindEQ__55_} holds with
${P} ={I} _{r}$. Then for equation \eqref{GrindEQ__1_},
\eqref{GrindEQ__2_} this condition also holds with $P=I_r$ and
resolvents $y=R_{\lambda } f,\, y_{1} =R\left(\lambda \right)f$
from Theorem \ref{th8} satisfies condition \eqref{GrindEQ__94_}
for $\Im \lambda \ne 0$ if they are the solutions of boundary
value problems for equations \eqref{GrindEQ__1_},
\eqref{GrindEQ__2_}, ($n_{\lambda } \left[y\right]\equiv 0$) and
\eqref{GrindEQ__1_}, \eqref{GrindEQ__2_} with boundary conditions
from Theorem \ref{th5} with the same operators
$\mathcal{M}_{\lambda } ,\, \mathcal{N}_{\lambda } $.
\end{proposition}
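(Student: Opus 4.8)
The statement contains two assertions: that condition \eqref{GrindEQ__55_} with $P=I_r$ is inherited by the full expression $l_\lambda=l-\lambda m-n_\lambda$ from its reduced counterpart $l-\lambda m$, and that the two resolvents satisfy \eqref{GrindEQ__94_} for every non-real $\lambda$. The plan is to treat them separately, using throughout that the order hypothesis forces the leading coefficients of $l_\lambda$ and $l-\lambda m$ to coincide, so that $Q(t,l_\lambda)=Q(t,l-\lambda m)$ and both reductions of the form \eqref{GrindEQ__51_} share the same canonical coefficient $Q$; only the coefficient $H$ and the weight $W$ differ, through $n_\lambda$.

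For the first assertion I would pass to the equivalent form \eqref{GrindEQ__56_}. By Theorem~\ref{th2} (formula \eqref{GrindEQ__30_} with $f=0$) together with \eqref{GrindEQ__29_}, for the full equation one has $(\Delta_{\lambda_0}(\alpha,\beta)g,g)=-\int_\alpha^\beta(\Im l_{\lambda_0}/\Im\lambda_0)\{y,y\}\,dt$, where $y$ is the solution of $l_{\lambda_0}[y]=0$ with $\vec y(0,l_{\lambda_0},m,0)=g$, and by \eqref{GrindEQ__52_}--\eqref{GrindEQ__53_} this is $\ge\int_\alpha^\beta m\{y,y\}\,dt\ge 0$. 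Thus it suffices to show that the full flow is still uniformly $m$-coercive in its initial data. Here I would take $\lambda_0=i\tau$ and use \eqref{GrindEQ__95_}--\eqref{GrindEQ__96_}, which give $(-\Im l_{i\tau}/\tau)\{y,y\}=m\{y,y\}+(\Im n_{i\tau}/\tau)\{y,y\}=(1+o(1))\,m\{y,y\}$ uniformly on $\bar{\mathcal I}$ as $\tau\to+\infty$; comparing the fundamental matrices of the full and reduced systems, which share $Q$ but whose coefficients $H$ differ through $n_\lambda$, one would transport the reduced coercivity $\int_\alpha^\beta m\{\tilde y,\tilde y\}\,dt\ge\delta\|g\|^2$ to the full solution for $\tau$ large, whence $P=I_r$ and \eqref{GrindEQ__56_} hold for $\lambda_0=i\tau$; the robustness remark following \eqref{GrindEQ__56_} then extends this to all $\lambda\in\mathcal B$. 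I expect this transfer of coercivity to be the main obstacle: the perturbing term $n_\lambda$ is not small, only its imaginary part is dominated by $m$ via \eqref{GrindEQ__95_}, so the difference of the two flows cannot be controlled uniformly but only in the $L^2_m$-metric, and it is precisely the Nevanlinna/order structure that must be made to yield this.

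The second assertion is more direct and requires no passage through $\lambda=i\tau$. By hypothesis $y_1=R(\lambda)f$ solves the boundary problem \eqref{GrindEQ__73_} for the full equation and $y=R_\lambda f$ solves the analogous problem for $l-\lambda m$, both with the same operators $\mathcal M_\lambda,\mathcal N_\lambda$; letting $h_1,h_2\in\mathcal H^r$ be the corresponding vectors and putting $w=h_1-h_2$, the difference $\vec z=\vec y_1(t,l_\lambda,m,f)-\vec y(t,l-\lambda m,m,f)$ appearing in \eqref{GrindEQ__94_} has, by \eqref{GrindEQ__25_}, the boundary values $\vec z(a)=\mathcal M_\lambda w$ and $\vec z(b)=\mathcal N_\lambda w$. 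Since $\mathcal I$ is finite and all the functions lie in $C^r(\bar{\mathcal I},\mathcal H)$, the limit in \eqref{GrindEQ__94_} is just the endpoint evaluation, equal to $\big[(\Re Q(b,l_\lambda)\mathcal N_\lambda w,\mathcal N_\lambda w)-(\Re Q(a,l_\lambda)\mathcal M_\lambda w,\mathcal M_\lambda w)\big]/\Im\lambda$, where $Q(t,l_\lambda)=Q(t,l-\lambda m)$ makes $\Re Q$ unambiguous at the endpoints. Applying the operator inequality \eqref{GrindEQ__72_} to $w$ and dividing by $(\Im\lambda)^2>0$ shows exactly that this quantity is $\le 0$ for every $\Im\lambda\ne 0$, which is \eqref{GrindEQ__94_}; as the argument never uses $\lambda=i\tau$, it yields \eqref{GrindEQ__94_} on all of $\mathbb C\setminus\mathbb R^1$, as claimed.
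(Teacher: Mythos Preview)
Your treatment of the second assertion is correct and is exactly what the paper does: once $Q(t,l_\lambda)=Q(t,l-\lambda m)$, the boundary conditions \eqref{GrindEQ__73_} give $\vec z(a)=\mathcal M_\lambda w$, $\vec z(b)=\mathcal N_\lambda w$, and \eqref{GrindEQ__72_} yields \eqref{GrindEQ__94_} immediately. The paper dispatches this in one line by pointing to Theorem~\ref{th5}.

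The first assertion, however, has a real gap. You invoke \eqref{GrindEQ__95_}--\eqref{GrindEQ__96_} to get $(-\Im l_{i\tau}/\tau)\{y,y\}=(1+o(1))\,m\{y,y\}$, but those conditions are hypotheses of Theorem~\ref{th8}, not of Proposition~\ref{prop2}; the proposition assumes only the order relation and the invertibility of the leading coefficient of $l-\lambda m$. Without \eqref{GrindEQ__95_} you have no quantitative control of $\Im n_{i\tau}$ by $m$, and your large-$\tau$ comparison of the two flows cannot even be formulated. You yourself flag this as ``the main obstacle,'' and it is fatal to the argument as written.

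The paper proceeds quite differently. It argues by contradiction at the single point $\lambda=i$, using the structural fact (from \eqref{GrindEQ__561_}, since $l=\Re l_i$) that $n_i=i\,\Im n_i$. If \eqref{GrindEQ__55_} with $P=I_r$ failed for the full equation, one gets solutions $y_k$ of $l_i[y]=0$ with unit initial data and $\int_\alpha^\beta(m+\Im n_i)\{y_k,y_k\}\,dt\to 0$. Writing $l_i[y_k]=0$ as $(l-im)[y_k]=n_i[y_k]$ and using Theorem~\ref{th1}, the full flow $X_i(t)f_k$ differs from the reduced flow $\tilde X_i(t)f_k$ by an integral against $W(s,l+im,n_i)Y_k$, which is small in norm uniformly in $t$ by \eqref{GrindEQ__29_} and the vanishing of $\int n_i\{y_k,y_k\}$. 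Passing to a subsequence with a.e.\ convergence and unwinding the quasi-derivatives (here the invertibility of $p_n-i\tilde p_n$ is used) transfers $m\{y_{k_q},y_{k_q}\}\to 0$ to the reduced solutions $\tilde y_{k_q}$, contradicting \eqref{GrindEQ__55_} for $l-\lambda m$. No asymptotics in $\tau$ and no hypothesis like \eqref{GrindEQ__95_} are needed; the key inputs are the identity $n_i=i\,\Im n_i$ and the pointwise a.e.\ argument via \eqref{GrindEQ__29_} and Proposition~\ref{prop1}.
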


\begin{proof}

In view of Theorem \ref{th5} it is sufficient to prove only
proposition about condition \eqref{GrindEQ__55_}.

Let for definiteness order $l=$ order $m=$ order $n_{\lambda }
=2n$.

Let for equation \eqref{GrindEQ__1_}, \eqref{GrindEQ__2_},
($n_{\lambda } \left[y\right]\equiv 0$) condition
\eqref{GrindEQ__55_} with ${P} ={ I}_{r}$ hold, but for equation
\eqref{GrindEQ__1_}, \eqref{GrindEQ__2_} that is not true. Then in
view of \cite{Khrab5} the solutions $y_k\left(t\right)$ of
equation \eqref{GrindEQ__1_}, \eqref{GrindEQ__2_} with $\lambda
=i$ exist for which
\begin{equation} \label{GrindEQ__100_}
\int _{\alpha }^{\beta }\left(m+\Im n_{i} \right)\left\{y_{k}
,y_{k} \right\} dt\to 0,\quad \vec{y}_{k} \left(0,l_{i}
,m,0\right)=f_{k} ,\; \left\| f_{k} \right\| =1,
\end{equation}
where $i\Im n_i =n_{i} $ in view of \eqref{GrindEQ__561_}. Hence
in view of \eqref{GrindEQ__29_}
\begin{equation} \label{GrindEQ__101_}
\int _{\alpha }^{\beta }\left(W_{i} \left(t,l+im,n_{i} \right)
Y_{k} \left(t,l+im,n_i\right),Y_{k}\left(t,l+im,n_i\right)
\right)dt = \int _{\alpha }^{\beta }n_{i} \left\{y_{k} ,y_{k}
\right\}dt \to 0.
\end{equation}

On the other hand
\begin{equation} \label{GrindEQ__102_}
X_{i} \left(t\right)f_{k} =\tilde X_{i} \left(t\right)f_{k}
+\tilde X_{i} \left(t\right)\int _{o}^{t}\tilde X_{i}^{-1}
\left(s\right)J^{-1} W\left(s,l+im,n_{i} \right)Y_{k}
\left(s,l+im,n_i\right) ds .
\end{equation}
in view of Theorem \ref{th1} and the fact that
$\vec{y}_k(t,l-im,n_i,y_k)=\vec{y}_k(t,l_i,m,0)$, where $\tilde
X_{\lambda } \left(t\right)$ is an analogue of $X_{\lambda }
\left(t\right)$ for the case $n_{\lambda } \left[y\right]\equiv
0$.

Comparing \eqref{GrindEQ__101_}, \eqref{GrindEQ__102_} we see that
\begin{equation} \label{GrindEQ__103_}
\left\|X_{i} \left(t\right)f_{k} -\tilde X_{i} \left(t\right)f_{k}
\right\| \to 0
\end{equation}
uniformly in $t\in \left[\alpha ,\beta \right]$.

In view of \eqref{GrindEQ__100_} subsequence $y_{k_{q} } $ exist
such that
\begin{equation} \label{GrindEQ__104_}
m\left\{y_{k_{q} } ,y_{k_{q} } \right\}\mathop{\to }\limits^{a.a.} 0,\quad n_{i} \left\{y_{k_{q} } ,y_{k_{q} } \right\}\mathop{\to }\limits^{a.a.} 0.
\end{equation}
Due to second proposition \eqref{GrindEQ__104_} and the arguments
as in the proof of Proposition \ref{prop1} one has
\begin{equation} \label{GrindEQ__105_}
y_{k_{q} }^{\left[j\right]} \left(t\left|n_{i} \right.
\right)\mathop{\to }\limits^{a.a.} 0\quad j=n,\ldots ,2n.
\end{equation}

Let us denote $\tilde y_{k_{q} } \left(t\right)=\tilde X_{i}
\left(t\right)f_{k_{q} } $. In view of Theorem \ref{th1} and
\eqref{GrindEQ__103_}
\begin{equation} \label{GrindEQ__106_}
\left\| y_{k_{q} }^{\left(j\right)} \left(t\right)-\tilde y_{k_{q}
}^{\left(j\right)} \left(t\right)\right\| \to 0,\quad j=1,\ldots
,n-1,
\end{equation}
\begin{multline} \label{GrindEQ__107_}
\left\| \left(p_{n} \left(t\right)-i\tilde{p}_{n}
\left(t\right)\right)\left[y_{k_{q} }^{\left(n\right)}
\left(t\right)-\tilde y_{k_{q} }^{\left(n\right)}
\left(t\right)\right]-\right.\\\left.-{i\over 2}(q_n(t)-i\tilde
q_n(t))\left[y_{k_q}^{(n-1)}(t)-\tilde y_{k_q}^{(n-1)}(t)\right]
-y_{k_{q} }^{\left[n\right]} \left(t\left|n_{i} \right.
\right)\right\| =\\=\left\| y_{k_{q} }^{\left[n\right]}
\left(t|l_i\right)-\tilde y_{k_{q} }^{\left[n\right]}
\left(t|l-im\right)\right\| \to 0
\end{multline}
uniformly in $t\in \left[\alpha ,\beta \right]$. Comparing
\eqref{GrindEQ__104_}, \eqref{GrindEQ__105_},
\eqref{GrindEQ__107_} and using $\left(p_{n}
\left(t\right)-i\tilde{p}_{n} \left(t\right)\right)^{-1} \in
B\left(\mathcal{H}\right)$ we have
\begin{equation} \label{GrindEQ__108_}
\left(\tilde{p}_{n} \left(t\right)\tilde y_{k_{q}
}^{\left(n\right)} \left(t\right),\tilde y_{k_{q}
}^{\left(n\right)} \left(t\right)\right)\mathop{\to
}\limits^{a.a.} 0.
\end{equation}
In view of \eqref{GrindEQ__106_}, \eqref{GrindEQ__108_},
\eqref{GrindEQ__104_}
$$
m\left\{\tilde y_{k_{q} } ,\tilde y_{k_{q} } \right\}\mathop{\to
}\limits^{a.a.} 0,\quad \vec{y}_{k_q}(0,l-im, m,0)=f_{k_q},
$$
that contradicts to the condition \eqref{GrindEQ__55_} with
$P=I_{r} $ for equation \eqref{GrindEQ__1_}, \eqref{GrindEQ__2_}
with $n_{\lambda } \left[y\right]\equiv 0$. Proposition
\ref{prop2} is proved.
\end{proof}

In the next theorem $\mathcal{I}=\mathbb{R}^1$ and condition
\eqref{GrindEQ__55_} hold with $P=I_{r} $ both on the negative
semi-axis $R_{-} $ (i.e. as $\mathcal{I}=R_{-} $) and on the
positive semi-axis $R_{+} $ (i.e. as $\mathcal{I}=R_{+} $).

\begin{theorem}\label{th9}
Let $\mathcal{I}=\mathbb{R}^1$, the coefficient of the expression
$l_\lambda$ \eqref{GrindEQ__2_} be periodic on each of the
semi-axes $R_{-} $ and $R_{+} $ with periods $T_{-} >0$ and $T_{+}
>0$ correspondingly. Then the spectrums of the monodromy operators
$X_{\lambda } \left(\pm T_{\pm } \right)$ ($X_{\lambda }
\left(t\right)$ is from Theorem \ref{th4}) do not intersect the
unit circle as $\Im  \lambda \ne 0$, the c.o. $M\left(\lambda
\right)$ of the equation \eqref{GrindEQ__5_} is unique and equal
to
\begin{equation} \label{GrindEQ__109_}
M\left(\lambda \right)=\left(\mathcal{P}\left(\lambda
\right)-\frac{1}{2} I_{r} \right)\, \left(iG\right)^{-1} \quad
\left(\Im  \lambda \ne 0\right),
\end{equation}
where the projection $\mathcal{P}\left(\lambda \right)=P_{+}
\left(\lambda \right)\left(P_{+} \left(\lambda \right)+P_{-}
\left(\lambda \right)\right)^{-1} ,\, P_{\pm } \left(\lambda
\right)$ are Riesz projections of the monodromy operators
$X_{\lambda } \left(\pm T_{\pm } \right)$ that correspond to their
spectrums lying inside the unit circle, $\left(P_{+} \left(\lambda
\right)+P_{-} \left(\lambda \right)\right)^{-1} \in
B\left(\mathcal{H}^{r} \right)$ as $\Im  \lambda \ne 0$.

Also let $\dim \mathcal{H}<\infty $, a finite interval $\Delta
\subseteq \mathcal{B} $. Then in Theorem \ref{th7} $d\sigma
\left(\mu \right)=d\sigma _{ac} \left(\mu \right)+d\sigma _{d}
\left(\mu \right),\mu \in \Delta $. Here $\sigma _{ac} \left(\mu
\right)\in AC\left(\Delta \right)$ and, for $\mu \in \Delta $,
$$\sigma '_{ac} \left(\mu \right)=\frac{1}{2\pi } G^{-1}
\left(Q_{-}^{*} \left(\mu \right)GQ_{-} \left(\mu
\right)-Q_{+}^{*} \left(\mu \right)GQ_{+} \left(\mu
\right)\right)G^{-1}, $$ where the projections $Q_{\pm } \left(\mu
\right)=q_{\pm } \left(\mu \right)\left(P_{+} \left(\mu
\right)+P_{-} \left(\mu \right)\right)^{-1} ,\, q_{\pm } \left(\mu
\right)$ are Riesz projections of the monodromy matrixes $X_{\mu }
\left(\pm T_{\pm } \right)$ corresponding to the multiplicators
belonging to the unit circle and such that they are shifted inside
the unit circle as $\mu $ is shifted to the upper half plane,
$P_{\pm } \left(\mu \right)=P_{\pm } \left(\mu +i0\right);\,
\sigma _{d} \left(\mu \right)$ is a step function.
\end{theorem}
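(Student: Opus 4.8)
The plan is to combine Floquet theory on each semi-axis with the definiteness hypothesis \eqref{GrindEQ__55_}. First I would record the Lagrange identity underlying \eqref{GrindEQ__47+_}: differentiating $X_\lambda^*(t)\bigl[\Re Q(t,l_\lambda)\bigr]X_\lambda(t)$ along the homogeneous equation \eqref{GrindEQ__54_} gives $\frac{d}{dt}\bigl(X_\lambda^*\,[\Re Q]\,X_\lambda\bigr)=2\,\Im\lambda\;X_\lambda^*\,W\!\left(t,l_\lambda,-\tfrac{\Im l_\lambda}{\Im\lambda}\right)X_\lambda$. Since the coefficients are $T_\pm$-periodic on $\mathbb{R}_\pm$, the Floquet relation $X_\lambda(t\pm T_\pm)=X_\lambda(t)X_\lambda(\pm T_\pm)$ holds there, and integrating the identity over one period yields $X_\lambda^*(\pm T_\pm)\,G\,X_\lambda(\pm T_\pm)-G=2\,\Im\lambda\,\Delta_\lambda^{\pm}$, where $\Delta_\lambda^{\pm}:=\int_0^{\pm T_\pm}X_\lambda^* W\!\left(s,l_\lambda,-\tfrac{\Im l_\lambda}{\Im\lambda}\right)X_\lambda\,ds$ satisfies $\pm\Delta_\lambda^{\pm}\gg 0$ by \eqref{GrindEQ__55_}--\eqref{GrindEQ__56_} with $P=I_r$ on each semi-axis. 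If $U_\pm:=X_\lambda(\pm T_\pm)$ had an eigenvalue $\mu$ with $|\mu|=1$, say $U_\pm v=\mu v$, pairing the identity with $v$ would force $(|\mu|^2-1)(Gv,v)=2\,\Im\lambda\,(\Delta_\lambda^{\pm}v,v)$; the left side vanishes while the right side is nonzero, a contradiction. For the continuous and residual parts of the spectrum I would run the same estimate on approximate eigenvectors, invoking the Krein-space machinery of \cite{Azizov} exactly as in the proof of Theorem \ref{th+}. Hence $\sigma(U_\pm)$ misses the unit circle for $\Im\lambda\neq 0$.

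Next I would exploit the resulting spectral dichotomy. Let $P_\pm(\lambda)$ be the Riesz projections of $U_\pm$ onto the part of the spectrum inside the unit circle. By Floquet, $X_\lambda(t)\,\mathrm{Range}\,P_+(\lambda)$ consists of the solutions that are $L^2_W$ at $+\infty$ and $X_\lambda(t)\,\mathrm{Range}\,P_-(\lambda)$ of those $L^2_W$ at $-\infty$; with $P=I_r$ these exhaust the square-integrable solutions at each end (the limit-point situation). Since $\Im\lambda\neq 0$ lies in the resolvent set, no nontrivial solution is $L^2_W$ on all of $\mathbb{R}$, so $\mathrm{Range}\,P_+(\lambda)\cap\mathrm{Range}\,P_-(\lambda)=\{0\}$; together with the maximality/dimension count this gives $\mathcal{H}^r=\mathrm{Range}\,P_+(\lambda)+\mathrm{Range}\,P_-(\lambda)$ as an algebraic direct sum and $\bigl(P_+(\lambda)+P_-(\lambda)\bigr)^{-1}\in B(\mathcal{H}^r)$. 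To pin down $M(\lambda)$ I would require the kernel of the resolvent \eqref{GrindEQ__57_} (equivalently \eqref{GrindEQ__47_}) to be square-integrable at both ends: for $t$ beyond the support of $F$ the kernel reduces to $X_\lambda(t)\{M(\lambda)\pm\tfrac12(iG)^{-1}\}(\cdots)$, whence $\mathrm{Range}\{M(\lambda)+\tfrac12(iG)^{-1}\}\subseteq\mathrm{Range}\,P_+(\lambda)$ and $\mathrm{Range}\{M(\lambda)-\tfrac12(iG)^{-1}\}\subseteq\mathrm{Range}\,P_-(\lambda)$. Because the two ranges are complementary, these conditions have a unique solution, namely $M(\lambda)=(\mathcal{P}(\lambda)-\tfrac12 I_r)(iG)^{-1}$ with $\mathcal{P}(\lambda)$ the projection onto $\mathrm{Range}\,P_+(\lambda)$ along $\mathrm{Range}\,P_-(\lambda)$; an elementary computation with the idempotents $P_\pm(\lambda)$, valid since $P_+(\lambda)+P_-(\lambda)$ is boundedly invertible, identifies this oblique projection as $P_+(\lambda)(P_+(\lambda)+P_-(\lambda))^{-1}$, which is \eqref{GrindEQ__109_}. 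Uniqueness of the c.o. is precisely the uniqueness of the solution of the two range conditions in the limit-point case, and $M(\bar\lambda)=M^*(\lambda)$ follows as in Theorem \ref{th6}.

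For the last assertion ($\dim\mathcal{H}<\infty$, finite $\Delta\subseteq\mathcal{B}$) I would analyse the boundary values of $M$. By Theorem \ref{th7}, since $P=I_r$, one has $d\sigma(\mu)=\tfrac1\pi\,\Im M(\mu+i0)\,d\mu$ wherever the boundary value exists, and using $G=G^*$ a short computation expresses $\Im M(\mu+i0)$ through $\mathcal{P}(\mu+i0)$ and $G^{-1}$. On $\Delta$ the multipliers of $X_\mu(\pm T_\pm)$ split into those lying strictly off the circle, which vary analytically and contribute only the isolated poles of $(P_++P_-)^{-1}$ — i.e. the genuine $L^2(\mathbb{R})$ Floquet eigenvalues, producing the step function $\sigma_d$ — and those lying on the circle. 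For the latter I would track, following \cite{Khrab6}, how each unit-circle multiplier is pushed strictly inside as $\mu\mapsto\mu+i0$; the associated Riesz projections $q_\pm(\mu)$, renormalised to $Q_\pm(\mu)=q_\pm(\mu)\bigl(P_+(\mu)+P_-(\mu)\bigr)^{-1}$, are exactly the parts of $\mathcal{P}(\mu+i0)$ that create a nonzero imaginary part. Substituting them into the expression for $\Im M$ yields the absolutely continuous density $\sigma_{ac}'(\mu)=\tfrac1{2\pi}G^{-1}\bigl(Q_-^*(\mu)GQ_-(\mu)-Q_+^*(\mu)GQ_+(\mu)\bigr)G^{-1}$, with $\sigma_{ac}\in AC(\Delta)$ and $\sigma_d$ the complementary step part.

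The main obstacle will be the two transversality facts feeding the explicit formula: establishing $\bigl(P_+(\lambda)+P_-(\lambda)\bigr)^{-1}\in B(\mathcal{H}^r)$ for $\Im\lambda\neq 0$ (the complementarity of the $\pm\infty$ decaying subspaces, where the definiteness \eqref{GrindEQ__55_} and the limit-point property are indispensable), and, on the real axis, the delicate boundary-value analysis that separates $d\sigma$ into its absolutely continuous and discrete parts and justifies the density formula through the motion of the multipliers across the unit circle. By contrast, the Lagrange identity, the Floquet relations, the identification of the decaying subspaces, and the idempotent algebra leading to \eqref{GrindEQ__109_} are routine once the transversality is in hand.
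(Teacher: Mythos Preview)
Your outline is correct and is precisely the argument the paper has in mind: the paper's own proof consists of the single sentence ``similar to the case $n_\lambda[y]\equiv 0$ in \cite{Khrab6}'', and what you have written is exactly that Floquet/Krein-space argument (Lagrange identity to exclude unit-circle spectrum, dichotomy via Riesz projections, transversality of the decaying subspaces giving $(P_++P_-)^{-1}\in B(\mathcal{H}^r)$, identification of $M(\lambda)$ from the $L^2$ conditions on the resolvent kernel, and boundary-value analysis of $\Im M(\mu+i0)$ for the spectral density). Your identification of the transversality and the real-axis limiting behaviour of the multipliers as the nontrivial steps is also accurate.
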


Let us notice that the sets on which $q_{\pm } \left(\mu
\right),P_{\pm } \left(\mu \right),\left(P_{+} \left(\mu
\right)+P_{-} \left(\mu \right)\right)^{-1} $ are not infinitely
differentiable do not have finite limit points $\in \mathcal{B} $
as well as the set of points of increase of $\sigma _{d} \left(\mu
\right)$.

\begin{proof}
The proof of Theorem \ref{th9} is similar to that on in the case
$n_{\lambda } \left[y\right]\equiv 0$ \cite{Khrab6}.
\end{proof}

The following examples demonstrate effects that are the results of
appearance in $l_{\lambda } $ \eqref{GrindEQ__2_} of perturbation
$n_{\lambda } $ depending nonlinearly on $\lambda $.

In Examples \ref{ex3}, \ref{ex4} nonlinear in $\lambda $
perturbation does not change the type of the spectrum. In this
examples $\dim \mathcal{H}=1,\, \, m\left[y\right]=-y''+y$.
$L_{m}^{2} \left(\mathcal{I}\right)=\mathop{L_{m}^{2}
}\limits^{\circ } \left(\mathcal{I}\right)=W_{2}^{1,2}
\left(\mathbb{R}^{1} \right)$. In Example \ref{ex5} such
perturbation implies an appearance of spectral gap with
"eigenvalue" in this gap.

\begin{example}\label{ex3}
Let
$$
l_{\lambda } \left[y\right]=iy'-\lambda
\left(-y''+y\right)-\left(-\frac{h}{\lambda } y\right),\, \, \,
h\ge 0.$$ Here $\mathcal{B} =\mathbb{C}\setminus 0,\, \, E_{0}
=E_{+0} $, spectral matrix $\sigma \left(\mu \right)\in AC_{loc}
$,
\\
$\sigma '\left(\mu \right)=\frac{1}{2\pi } \left(\begin{array}{cc}
{\frac{2}{\sqrt{4h+1-4\mu ^{2} } } } & {0} \\ {0} & {\frac{1}{2}
\sqrt{4h+1-4\mu ^{2} } } \end{array}\right)$, as $\left|\mu
\right|<\sqrt{h+{1 \mathord{\left/{\vphantom{1
4}}\right.\kern-\nulldelimiterspace} 4} } $,
\\
$\sigma '\left(\mu \right)=0$, as $\left|\mu \right|>\sqrt{h+{1
\mathord{\left/{\vphantom{1 4}}\right.\kern-\nulldelimiterspace}
4} } $.

In Example \ref{ex3} nonlinear in $\lambda $ perturbation change
edges of spectral band.
\end{example}

\begin{example}\label{ex4}
Let
$$
l_{\lambda } \left[y\right]=y^{\left(IV\right)} -\lambda
\left(-y''+y\right)-\left(-\frac{h}{\lambda } y\right),\, \, \,
h\ge 0.$$ Here $\mathcal{B} =\begin{cases}
\mathbb{C}\setminus\{0\},&h\ne 0 \\ \mathbb{C},&h=0
\end{cases},\, \, \, E_{0} =E_{+0} $, spectral matrix
$\sigma \left(\mu \right)\in AC_{loc} $,
\begin{gather*}
\sigma '\left(\mu \right)=\begin{cases} \frac{1}{2\pi }
\sqrt{\frac{\lambda +\sqrt{D} }{D} } \left(\begin{array}{cccc}
{\frac{2}{\lambda +\sqrt{D} } } & {0} & {0} & {-1} \\ {0} & {1} &
{\frac{-\lambda +\sqrt{D} }{2} } & {0} \\ {0} & {\frac{-\lambda
+\sqrt{D} }{2} } & {\left(\frac{\lambda -\sqrt{D} }{2} \right)^{2}
} & {0} \\ {-1} & {0} & {0} & {\frac{\lambda +\sqrt{D} }{2} }
\end{array}\right),&\text{ as }-\sqrt{h} <\mu <0,\, \mu
>\sqrt{h}  \\ \frac{1}{2\pi } \cdot \frac{1}{\sqrt{\lambda
-2\sqrt{q} } } \left(\begin{array}{cccc} {\frac{1}{\sqrt{q} } } &
{0} & {0} & {-1} \\ {0} & {1} & {-\sqrt{q} } & {0} \\ {0} &
{-\sqrt{q} } & {\sqrt{q} \left(\lambda -\sqrt{q} \right)} & {0} \\
{-1} & {0} & {0} & {\lambda -\sqrt{q} }
\end{array}\right),&\text{ as }\mu ^{*} <\mu <\sqrt{h},
\end{cases},
\end{gather*}
where $D=\lambda ^{2} -4q,\, q={h \mathord{\left/{\vphantom{h
\lambda }}\right.\kern-\nulldelimiterspace} \lambda } -\lambda ,\,
\mu ^{*} =\mu ^{*} \left(h\right)$ - nonnegative root of equation
$D=0$. $\sigma '\left(\mu \right)=0$, as $\mu \notin
\left[-\sqrt{h} ,0\right]\bigcup \left[\mu ^{*} ,\infty \right)$.

In Example \ref{ex4} nonlinear in $\lambda $ perturbation implies
an appearance of additional spectral band $\left[-h,0\right]$,
variation of edge of semi in finite spectral band and appearance
of interval $\left(\mu ^{*} ,\sqrt{h} \right)$ of fourfold
spectrum.
\end{example}

\begin{example}\label{ex5}
Let $\dim \mathcal{H}=2$,
$$l_{\lambda } \left[y\right]=\left(\begin{array}{cc} {0} & {-1} \\
{1} & {0}
\end{array}\right)y'-\lambda y-\left(\begin{array}{cc} {-{h
\mathord{\left/{\vphantom{h \lambda
}}\right.\kern-\nulldelimiterspace} \lambda } } & {0} \\ {0} & {0}
\end{array}\right)y,\quad h\ge 0.$$
Here $\mathcal{B} =\begin{cases} \mathbb{C}\setminus\{0\},& h\ne 0 \\
\mathbb{C},& h=0 \end{cases},$ spectral matrix $\sigma \left(\mu
\right)=\sigma _{ac} \left(\mu \right)+\sigma _{d} \left(\mu
\right)$, $\sigma_{ac}(\mu)\in AC_{loc},  \sigma '_{ac} \left(\mu
\right)\ne 0$, as $\left|\mu \right|>\sqrt{h} ,\, \, \sigma '_{ac}
\left(\mu \right)=0$, as $\left|\mu \right|<\sqrt{h} $,
step-function $\sigma _{d} \left(\mu \right)$ has only one jump
$\left(\begin{array}{cc} {0} & {0} \\ {0} & {{\sqrt{h}
\mathord{\left/{\vphantom{\sqrt{h}
2}}\right.\kern-\nulldelimiterspace} 2} } \end{array}\right)$ in
point $\mu =0$ (inside of spectral gap). In this point
$$\left(E_{+0} -E_{0} \right)f=\left(\begin{array}{cc} {0} & {0} \\
{0} & {{\sqrt{h}  \mathord{\left/{\vphantom{\sqrt{h}
2}}\right.\kern-\nulldelimiterspace} 2} } \end{array}\right)\int
_{-\infty }^{\infty }e^{-\sqrt{h} \left|t-s\right|}
f\left(s\right) ds,\, \, \, f\left(t\right)\in L^{2}
\left(\mathbb{R}^{1} \right).$$
\end{example}

Let us notice that in view of Floquet theorem conditions of
Theorem \ref{th8} ((\ref{GrindEQ__94_}) with account of Remark
\ref{rem3}) hold for all Examples \ref{ex3}-\ref{ex5}.

\section*{Acknowledgments} The author is grateful to professor
F.S. Rofe-Beketov for his great attention to this work.

\end{document}